\documentclass[12pt,a4paper]{article}

\usepackage{latexsym}
\usepackage{amsmath}
\usepackage{amssymb}
\usepackage{amsthm}
\usepackage{amscd}
\usepackage{mathrsfs}
\usepackage[all]{xy}
\usepackage{graphicx}
\usepackage{comment}
\usepackage{arydshln}

\setlength{\oddsidemargin}{2pt}
\setlength{\textwidth}{\paperwidth}
\addtolength{\textwidth}{-2in}
\addtolength{\textwidth}{-2\oddsidemargin}

\setlength{\topmargin}{-25pt}
\setlength{\textheight}{690pt}

\newtheorem{thm}{Theorem}[section]
\newtheorem{prop}[thm]{Proposition}

\newtheorem{lem}[thm]{Lemma}
\newtheorem{cor}[thm]{Corollary}

\newtheorem{rem}[thm]{Remark}

\newtheorem*{thmn}{Theorem}

\makeatletter

\@addtoreset{equation}{section}
\makeatother

\makeatletter
\newcommand{\subsubsubsection}{\@startsection{paragraph}{4}{\z@}%
 {1.0\Cvs \@plus.5\Cdp \@minus.2\Cdp}%
 {.1\Cvs \@plus.3\Cdp}%
 {\reset@font\sffamily\normalsize}
 }
\makeatother
\setcounter{secnumdepth}{4}

\DeclareMathOperator{\Spec}{Spec}

\DeclareMathOperator{\Gal}{Gal}

\DeclareMathOperator{\Hom}{Hom}

\DeclareMathOperator{\Ind}{Ind} 
\DeclareMathOperator{\Tr}{Tr}

\DeclareMathOperator{\Irr}{Irr}

\DeclareMathOperator{\Proj}{Proj}

\DeclareMathOperator{\SL}{SL}
\DeclareMathOperator{\GL}{GL}

\DeclareMathOperator{\Sp}{Sp}

\DeclareMathOperator{\SO}{SO}
\DeclareMathOperator{\SU}{SU}
\DeclareMathOperator{\PU}{PU}

\DeclareMathOperator{\oU}{U}

\DeclareMathOperator{\oO}{O}

\newcommand{\bA}{\mathbb{A}}

\newcommand{\bF}{\mathbb{F}}
\newcommand{\bG}{\mathbb{G}}

\newcommand{\bQ}{\mathbb{Q}}

\newcommand{\bZ}{\mathbb{Z}}




\newcommand{\cE}{\mathcal{E}}

\newcommand{\cO}{\mathcal{O}}


\newcommand{\fm}{\mathfrak{m}}


\newcommand{\sK}{\mathscr{K}}


\newcommand{\ol}{\overline}

\newcommand{\wt}{\widetilde}

\newcommand{\cf}{\textit{cf.\ }}

\begin{document}

\title
{Mod $\ell$ Weil representations and Deligne--Lusztig inductions for unitary groups}
\author{Naoki Imai and Takahiro Tsushima}
\date{}
\maketitle
\begin{abstract}
We study the mod $\ell$ Weil representation 
of a finite unitary group and related Deligne--Lusztig inductions. 
In particular, we study their decomposition as representations of a symplectic group, 
and give a construction of a mod $\ell$ Howe correspondence for 
$(\mathrm{Sp}_{2n},\mathrm{O}_2^-)$ including the case where $p=2$. 
\end{abstract}

\footnotetext{2010 \textit{Mathematics Subject Classification}. 
 Primary: 20C33; Secondary: 11F27.} 

\section{Introduction}
Let $q$ be a power of a prime number $p$. 
Weil representations of symplectic groups over 
$\mathbb{F}_q$ are studied in \cite{SaiRepsym} and \cite{HoweCharW} 
after \cite{Weiopuni} if $q$ is odd. 
Weil representations of general linear groups and 
unitary groups over $\mathbb{F}_q$ 
are constructed in \cite{GerWeil} for any $q$. 
The Howe correspondence is constructed using 
the Weil representations. 

In \cite{ITGeomHW}, we construct Weil representations of unitary groups by using cohomology of varieties over finite fields. 
More concretely, we consider the affine smooth variety
$X_n$ 
defined by $z^q+z=\sum_{i=1}^n x_i^{q+1}$
in $\mathbb{A}_{\mathbb{F}_{q^2}}^{n+1}$, where $n \geq 2$.  
Let $\mathbb{F}_{q,+}=\{x \in \mathbb{F}_{q^2} \mid x^q+x=0\}$. 
This variety admits an action of a finite unitary group 
$\oU_n(\bF_q)$ and a natural action of $\mathbb{F}_{q,+}$. 
Let $\ell \neq p$ be a prime number and 
$\psi \in \Hom (\mathbb{F}_{q,+},\overline{\mathbb{Q}}_{\ell}^{\times}) \setminus \{1\}$. 
Then the $\psi$-isotypic part 
$V_n =H_{\mathrm{c}}^n(X_{n,\overline{\mathbb{F}}_q},\overline{\mathbb{Q}}_{\ell})[\psi]$ realizes the Weil representation of $\oU_n(\bF_q)$ with a natural action of $\Gal (\ol{\bF}_{q}/\bF_{q^2})$. 
We can use this Galois action to construct Shintani lifts for Weil representations as in \cite{ITShinlift}. 
Further $V_n$ is isomorphic to middle  
 cohomology of a $\mu_{q+1}$-torsor over the complement of a Fermat hypersurface in a projective space as $\oU_n(\bF_q)$-representations.  
Let $S_n$ be the Fermat hypersurface 
defined by the homogeneous polynomial 
$\sum_{i=1}^n x_i^{q+1}=0$ in $\mathbb{P}_{\mathbb{F}_{q^2}}^{n-1}$. 
Let $Y_n=\mathbb{P}_{\mathbb{F}_{q^2}}^{n-1} \setminus S_n$. 
Here let $\oU_n(\bF_q)$ act on $\mathbb{P}_{\mathbb{F}_{q^2}}^{n-1}$ by multiplication.
Let $\widetilde{Y}_n$ be the affine smooth variety 
defined by $\sum_{i=1}^n x_i^{q+1}=1$ in $\mathbb{A}_{\mathbb{F}_{q^2}}^n$. 
The natural morphism 
$f \colon \widetilde{Y}_n \to Y_n;\ 
(x_1,\ldots,x_n) \mapsto [x_1:\cdots:x_n]$ is a $\mu_{q+1}$-torsor. These varieties appear as Deligne--Lusztig varieties. 

Let $\Lambda \in \{\ol{\bQ}_{\ell},\ol{\bF}_{\ell} \}$. 
Let $\mathscr{K}_{\chi}$ denote the 
sheaf of $\Lambda$-modules 
on $Y_n$ associated to a character 
$\chi^{-1} \colon \mu_{q+1} \to \Lambda^{\times}$
and the $\mu_{q+1}$-torsor $f$. 
We identify $\mu_{q+1}$ with the center of $\oU_n(\bF_q)$. 
For $\chi \in \Hom (\mu_{q+1}, \ol{\bQ}_{\ell}^{\times})$, 
we have an isomorphism 
$V_n[\chi] \simeq H_{\mathrm{c}}^{n-1}(
 Y_{n,\overline{\mathbb{F}}_q},\mathscr{K}_{\chi})$ 
as $\oU_n(\bF_q)$-representations (\cf \eqref{eq:VnA}), which we can write using Deligne--Lusztig induction (\cite[Proposition 6.1]{ITGeomHW}). 
In this paper, we study a modular coefficients case of this cohomology. 
Namely, we analyze 
\[
 H_{\mathrm{c}}^{n-1}(Y_{n,\overline{\mathbb{F}}_q},\mathscr{K}_{\xi}) 
\] 
as $\oU_n(\bF_q)$-representations over $\ol{\bF}_{\ell}$ 
for $\xi \in \Hom (\mu_{q+1},\ol{\bF}_{\ell}^{\times})$. 
We show that these representations are irreducible in the most cases, 
but it can be an extension of the trivial representation by an irreducible representations when $\ell \mid q+1$. This is contrary to the $\ol{\bQ}_{\ell}$-coefficients case, where they are all irreducible (\cf Proposition \ref{prop:Vn}). 
See Proposition \ref{ccc} for a more precise result. 

In \cite{ITGeomHW}, we consider a rational 
form of $X_{2n}$ over $\bF_q$, which is denoted by $X'_{2n}$.  
Using the Frobenius action coming from 
the rationality of $X'_{2n}$ over $\mathbb{F}_q$, we can obtain a representation of $\Sp_{2n}(\bF_q) \times \oO_2^-(\bF_q)$ on $H_{\mathrm{c}}^{2n}(X'_{2n,\overline{\mathbb{F}}_q},\overline{\mathbb{Q}}_{\ell})[\psi]$ (\cf \cite[\S 7.1.1]{ITGeomHW}), for 
which we simply write $W_{n,\psi}$. 
We also consider a rational form of $Y_{2n}$, which we denote by 
$Y'_{2n}$. 
For $\chi \in \Hom ( \mu_{q+1}, \Lambda^{\times})$, 
we can define a sheaf 
of $\Lambda$-modules $\mathscr{K}_{\chi}$ on $Y'_{2n}$ similarly, 
and then the cohomology 
$H_{\mathrm{c}}^{2n-1}(Y'_{2n,\overline{\mathbb{F}}_q},\mathscr{K}_{\chi})$ 
is regarded as an 
$\Sp_{2n}(\bF_q)$-representation. 
Similarly as above, 
we have an isomorphism  $W_{n,\psi}[\chi] \simeq 
H_{\mathrm{c}}^{2n-1}(Y'_{2n,\overline{\mathbb{F}}_q},\mathscr{K}_{\chi})$ 
as $\Sp_{2n}(\bF_q)$-representations 
for $\chi \in \Hom ( \mu_{q+1}, \ol{\bQ}_{\ell}^{\times})$. 
If $\chi^2=1$, we can define the plus part $H_{\mathrm{c}}^{2n-1}(Y'_{2n,\overline{\mathbb{F}}_q},\mathscr{K}_{\chi})^{+}$ and the minus part $H_{\mathrm{c}}^{2n-1}(Y'_{2n,\overline{\mathbb{F}}_q},\mathscr{K}_{\chi})^{-}$ as $\Sp_{2n}(\bF_q)$-representations using the Frobenius action. 
Any irreducible representation $\sigma$ of $\oO_2^-(\bF_q)$ over $\ol{\bQ}_{\ell}$ is attached to 
\[
 [\xi] \in \{ \xi \in \Hom (\mu_{q+1},\ol{\bF}_{\ell}^{\times}) \mid 
 \xi^2 \neq 1 \}/(\mathbb{Z}/2\mathbb{Z}) \quad \textrm{or} \quad  
 (\xi,\kappa) \in \Hom (\mu_{q+1},\mu_2 (\ol{\bF}_{\ell})) \times \{ \pm \} 
\]
as \cite[\S 7.2]{ITGeomHW}, where 
$1 \in \bZ/2\bZ$ acts on $\Hom (\mu_{q+1},\ol{\bF}_{\ell}^{\times})$ 
by $\xi \mapsto \xi^{-1}$. 
Then $W_{n,\psi}[\sigma]$ is isomorphic to 
\[
 H_{\mathrm{c}}^{2n-1}(Y'_{2n,\overline{\mathbb{F}}_q},\mathscr{K}_{\chi}) \quad \textrm{or} \quad 
 H_{\mathrm{c}}^{2n-1}(Y'_{2n,\overline{\mathbb{F}}_q},\mathscr{K}_{\chi})^{\kappa}
\]
accordingly. This realizes the Howe correspondence for 
the dual pair $\Sp_{2n}(\bF_q) \times \oO_2^-(\bF_q)$. 

The aim of this paper is to propose the modular coefficients version of this correspondence as a 
mod $\ell$ Howe correspondence for $\Sp_{2n}(\bF_q) \times \oO_2^-(\bF_q)$ (\cf \S \ref{sec:Formulation}) and study the mod $\ell$ correspondence. 
Our main result is the following: 


\begin{thmn}[Theorem \ref{thm}]
Assume that $\ell \neq 2$. The $\Sp_{2n}(\bF_q)$-representations 
\begin{align*}
 &H_{\mathrm{c}}^{2n-1}(Y'_{2n,\overline{\mathbb{F}}_q},\mathscr{K}_{\xi}) 
 \quad \textrm{for $[\xi] \in 
 \{ \xi \in \Hom (\mu_{q+1},\ol{\bF}_{\ell}^{\times}) \mid 
 \xi^2 \neq 1 \}/(\mathbb{Z}/2\mathbb{Z})$}, \\ 
 &H_{\mathrm{c}}^{2n-1}
 (Y'_{2n,\overline{\mathbb{F}}_q},\mathscr{K}_{\xi})^{\kappa} 
 \quad \textrm{for $\xi \in \Hom (\mu_{q+1},\mu_2 (\ol{\bF}_{\ell}))$, 
 $\kappa \in \{ \pm \}$} 
\end{align*}
are irreducible 
except the case where $\ell \mid q+1$ and 
$(\xi,\kappa)=(1,+)$, 
in which case 
$H_{\mathrm{c}}^{2n-1}
(Y'_{2n,\overline{\mathbb{F}}_q},\mathscr{K}_{\xi})^{\kappa}$ is 
a non-trivial extension of 
the trivial representation 
by an irreducible representation. 
Furthermore, 
the above representations 
have no irreducible constituent in common. 
\end{thmn}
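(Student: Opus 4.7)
The plan is to deduce the mod $\ell$ statement from the characteristic zero Howe correspondence combined with a mod $\ell$ Brauer-type comparison, using the Deligne--Lusztig interpretation of $H_{\mathrm{c}}^{2n-1}(Y'_{2n,\ol{\bF}_q},\mathscr{K}_\xi)$ already established in the paper. For $\xi \in \Hom(\mu_{q+1}, \ol{\bF}_\ell^\times)$ I would choose a Teichm\"uller lift $\tilde\xi \in \Hom(\mu_{q+1}, \ol{\bQ}_\ell^\times)$ of the same order. Using the identification $W_{n,\psi}[\tilde\xi] \simeq H_{\mathrm{c}}^{2n-1}(Y'_{2n,\ol{\bF}_q},\mathscr{K}_{\tilde\xi})$ and the known Howe correspondence for $(\mathit{Sp}_{2n}, O_2^-)$, the characteristic-zero cohomology is a single irreducible $\mathit{Sp}_{2n}(\bF_q)$-representation when $\tilde\xi^2 \neq 1$, and a sum of two distinct irreducibles matching the $\pm$ decomposition via the Frobenius eigenvalue when $\tilde\xi^2 = 1$. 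These constituents are pairwise non-isomorphic, separated by their restriction to $\mu_{q+1} \subset \mathit{Sp}_{2n}(\bF_q)$ and, in the $\tilde\xi^2 = 1$ case, by the Frobenius eigenvalue.

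To pass to $\ol{\bF}_\ell$, I would use that $\mathscr{K}_\xi$ is the mod $\ell$ reduction of an integral form of $\mathscr{K}_{\tilde\xi}$, combined with vanishing of $H_{\mathrm{c}}^i(Y'_{2n,\ol{\bF}_q},\mathscr{K}_\xi)$ outside $i = 2n-1$ coming from the Deligne--Lusztig-type structure of $Y'_{2n}$. This forces torsion-freeness of the integral $H_{\mathrm{c}}^{2n-1}$, so the $\ol{\bF}_\ell$-cohomology is the mod $\ell$ reduction of the $\ol{\bQ}_\ell$-cohomology. Irreducibility then reduces to showing that each irreducible $\ol{\bQ}_\ell$-constituent has irreducible Brauer reduction. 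Via the Deligne--Lusztig description, these constituents are (up to sign) inductions $R^{\mathit{Sp}_{2n}}_T(\tilde\xi)$ from maximal tori, and mod $\ell$ irreducibility holds whenever $\tilde\xi$ is in $\ell$-general position. Pairwise disjointness of Brauer constituents for different $[\xi]$, and for different $\kappa$ in the $\xi^2 = 1$ case, follows from the same central-character and Frobenius-eigenvalue invariants that separated them in characteristic zero, since those invariants descend to the Brauer reductions.

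The main obstacle is the exceptional case $\ell \mid q+1$ with $(\xi,\kappa) = (1,+)$. Here $\tilde\xi = 1$ is $\ell$-singular, so the general-position irreducibility criterion fails and a trivial constituent enters the Frobenius-plus part of the reduction. To show the resulting extension is non-split (rather than a direct sum), I would analyze the long exact sequence of compactly supported cohomology associated with the open immersion of $Y'_{2n}$ into a rational form of $\mathbb{P}^{2n-1}_{\bF_{q^2}}$, with boundary the rational form of $S_{2n}$; the trivial constituent should originate from the constant sheaf on the boundary, and one must show that the connecting morphism produces a nonzero class in $\Ext^1_{\ol{\bF}_\ell[\mathit{Sp}_{2n}(\bF_q)]}$ between the trivial representation and the irreducible Brauer reduction of the Howe-correspondent piece. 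Exhibiting this non-zero extension class explicitly --- either by a geometric computation of the connecting map, or by comparing with the mod $\ell$ decomposition matrix for $\mathit{Sp}_{2n}(\bF_q)$ in cross-characteristic --- is where I expect the main technical difficulty to lie.
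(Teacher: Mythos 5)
Your proposal has several genuine gaps, and two of them are not repairable as stated.

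\textbf{The ``torsion-freeness, so reduction equals $\ol{\bF}_\ell$-cohomology'' step is false when $\ell \mid q+1$.} You claim that vanishing of $H_{\mathrm{c}}^i(Y'_{2n,\ol{\bF}_q},\mathscr{K}_\xi)$ outside the middle degree plus torsion-freeness of the integral middle cohomology implies $H_{\mathrm{c}}^{2n-1}(\cdot,\ol{\bF}_\ell)$ is the reduction of the $\ol{\bQ}_\ell$-cohomology. But the higher integral cohomology has nonzero torsion when $\ell \mid q+1$: for the constant sheaf the paper's equation \eqref{T} shows $H_{\mathrm{c}}^i(Y_{n,\overline{\mathbb{F}}_q},\mathcal{O}) \simeq \mathcal{O}/\ell^a$ for odd $i$ with $n \leq i < 2n-2$, and Proposition \ref{lp} shows that $H_{\mathrm{c}}^n(Y_n,\mathscr{K}_{\chi})[\fm]$ is nonzero precisely when $\chi_r = 1$. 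By the universal coefficient sequence \eqref{T2}, this torsion contributes an extra constituent to $H_{\mathrm{c}}^{n-1}(Y_n,\mathscr{K}_{\ol{\chi}})$ beyond the reduction of $H_{\mathrm{c}}^{n-1}(Y_n,\mathscr{K}_\chi)\otimes \mathbb{F}$. The paper controls this torsion via Lemma \ref{surj}, which rests on a transcendental comparison with topological cohomology of weighted hypersurfaces and is a nontrivial geometric input. Your proposal skips past this and in fact your torsion-freeness claim contradicts the very phenomenon that produces the exceptional $(1,+)$ case.

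\textbf{The disjointness argument is based on invariants that do not exist at the level of $\mathit{Sp}_{2n}(\bF_q)$-modules.} You propose distinguishing constituents by ``restriction to $\mu_{q+1}\subset \mathit{Sp}_{2n}(\bF_q)$'' and by ``Frobenius-eigenvalue invariants.'' But $\mu_{q+1}$ is the centre of $U'_{2n}(\bF_q)$, not a subgroup of $\mathit{Sp}_{2n}(\bF_q)$; it acts on the cohomology commuting with $\mathit{Sp}_{2n}(\bF_q)$, and the $\mathscr{K}_\xi$-isotypic decomposition records this external action, not a central character of $\mathit{Sp}_{2n}$. Similarly the Frobenius eigenvalue is not intrinsic to the abstract $\ol{\bF}_\ell[\mathit{Sp}_{2n}(\bF_q)]$-module; once you pass to the Brauer character you cannot read it off. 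The paper instead separates the pieces by $\ell$-block theory: \eqref{eq:lsW} places the constituents in distinct Lusztig series $\cE(\mathit{Sp}_{2n},(s_\chi))$, Lemma \ref{lem:diflb} shows that non-geometrically-conjugate $\ell'$-parameters give disjoint $\ell$-block unions, and for the two $\nu$-pieces with the same block the paper performs a genuine Brauer-character trace computation (Proposition \ref{433} and Lemma \ref{lem:dif}) on an explicit $\ell$-regular element. This last step cannot be avoided by appealing to a ``descending invariant.''

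\textbf{``$\ell$-general position'' is not the applicable criterion here, and the irreducibility arguments require results that are not simply tautological.} These cohomology groups are not single Deligne--Lusztig inductions $R^{\mathit{Sp}_{2n}}_T(\tilde\xi)$; the unipotent part ($\xi=1$) sits in the principal series and is neither cuspidal nor in general position, and even for $\xi^2\neq1$ the irreducibility of the Brauer reduction is derived from a dimension bound in a fixed $\ell$-block (Lemma \ref{lem:lirr}, which uses Hiss--Malle) and from the classification of small cross-characteristic representations (\cite[Theorem 2.1]{GMSTCross}) ruling out Weil modules as constituents of the unipotent piece (Lemma \ref{weil}, via the block separation). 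A general-position heuristic would not deliver the precise exceptional case at $(\xi,\kappa)=(1,+)$, and it would not give the $p=2$ case at all, which the paper handles separately via Guralnick--Tiep.

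Your sketch for the exceptional case is in the right spirit (non-split extension detected near the boundary), but the paper actually proves non-splitness by computing vanishing of $\mathit{Sp}_{2n}(\bF_q)$-invariants, namely $H_{\mathrm{c}}^{2n-1}(Y'_{2n,\overline{\mathbb{F}}_q},\mathbb{F})^{\mathit{Sp}_{2n}(\bF_q)}=0$ in Proposition \ref{sol}, established by an inductive geometric argument through quotients by $U(\bF_q)^n$ and $\mathit{SL}_2(\bF_q)^n$. Your plan to exhibit a nonzero connecting class in $\Ext^1$ is an alternative that would also work if carried out, but you correctly flag it as the hard step and give no concrete way through it.
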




In the following, we briefly introduce a content of each 
section. 
In \S \ref{Pre1}, we recall several fundamental facts 
proved in \cite{ITGeomHW}. 
In \S \ref{Pre3}, we recall general facts on \'{e}tale cohomology. 
In \S \ref{ssec:Lusbl}, we recall results on 
Lusztig series and $\ell$-blocks. 

In \S \ref{mod1}, we investigate the mod $\ell$ cohomology of $Y_n$ 
as a $\oU_n(\bF_q)$-representation. 
Our fundamental result is Proposition \ref{lp}. 
To show this proposition, we need a transcendental 
result in \cite{DimSinghyp} (\cf the proof of Lemma \ref{surj}). 
In \S \ref{mod2}, we prepare some geometric results on 
cohomology of $Y'_{2n}$. 
In \S \ref{sec:RepSP}, we study 
mod $\ell$ cohomology of $Y'_{2n}$ 
as an $\Sp_{2n}(\bF_q)$-representation for $\ell \neq 2$. 
In a modular 
representation theoretic view point,  
Brauer characters associated to $V_n$ and $W_{n,\psi}$
have been studied in \cite{GMSTCross}, 
\cite{GuTiCross} and \cite{HiMaLowuni} etc. 
Using these results, we study the cohomology 
of $Y_n$ and $Y'_{2n}$ as mentioned above. 


In \S \ref{mod3}, 
we formulate a mod $\ell$ Howe correspondence 
for $(\Sp_{2n},\oO_2^-)$ using mod $\ell$ cohomology of $Y'_{2n}$, 
and state our result in terms of the mod $\ell$ Howe correspondence. 

\subsection*{Acknowledgements}
The authors would like to thank the referee for giving helpful comments. 
This work was supported by JSPS KAKENHI Grant Numbers 20K03529, 21H00973, 22H00093.  

\subsection*{Notation}
Let $\ell$ be a prime number. 
For a finite abelian group $A$, let $A^{\vee}$
denote the character group $\Hom (A,\overline{\mathbb{Q}}_{\ell}^{\times})$. For a finite group $G$ and a finite-dimensional 
representation $\pi$ and an irreducible representation 
$\rho$ of $G$ over $\overline{\mathbb{Q}}_{\ell}$, 
let $\pi[\rho]$ denote the $\rho$-isotypic part of $\pi$. 
For the trivial representation $1$ of $G$, 
we often write $\pi^G$ for $\pi[1]$.  

Every scheme is equipped with 
the reduced scheme structure. 
For an integer $i \geq 0$, we write $\mathbb{A}^i$ and $\mathbb{P}^i$ 
for the $i$-dimensional 
affine space over $\overline{\mathbb{F}}_q$ and 
the $i$-dimensional 
projective space over $\overline{\mathbb{F}}_q$, respectively. 
We set $\bG_{\mathrm{m}}=\mathbb{A}^1 \setminus \{0\}$. 
For a scheme $X$ over a field $k$ and a field extension $l/k$, let $X_{l}$ denote the base change of $X$ to $l$.

\section{Preliminaries}\label{Pre}
\subsection{Weil representation of unitary group}\label{Pre1}
In this subsection, we recall several facts proved in \cite{ITGeomHW}. 
Let $q$ be a power of a prime number $p$. 
For a positive integer $m$ prime to $p$, 
we put 
\[
 \mu_m = 
 \{ a \in \ol{\bF}_q \mid a^m=1 \}. 
\]
Let $n$ be a positive integer.
Let $\oU_n$ be the unitary group over $\bF_q$ 
defined by 
the hermitian form 
\[
 \mathbb{F}_{q^2}^n \times \mathbb{F}_{q^2}^n
 \to \mathbb{F}_{q^2};\ 
 ((x_i),(x'_i)) \mapsto  \sum_{i=1}^n x_i^q x'_i. 
\]
We consider the Fermat hypersurface $S_n$
defined by 
$\sum_{i=1}^n x_i^{q+1}=0$
in $\mathbb{P}_{\mathbb{F}_q}^{n-1}$. 
Let $Y_n=\mathbb{P}_{\mathbb{F}_q}^{n-1} \setminus S_n$. 
Let $\oU_n(\bF_q)$ act on $\mathbb{P}_{\mathbb{F}_{q^2}}^{n-1}$ 
by left multiplication. 
Let $\widetilde{Y}_n$ be the affine smooth variety 
defined by $\sum_{i=1}^n x_i^{q+1}=1$ in $\mathbb{A}_{\bF_q}^n$. 
Similarly, $\oU_n(\bF_q)$ acts on $\widetilde{Y}_{n,\bF_{q^2}}$. 
The morphism 
\begin{equation}\label{torsor1}
\widetilde{Y}_n \to Y_n;\ 
(x_1,\ldots,x_n) \mapsto [x_1:\cdots:x_n]
\end{equation}
 is a $\mu_{q+1}$-torsor and $\oU_n(\bF_q)$-equivariant. 

Let $\ell \neq p$ be a prime number. 
Let $\omega_{\oU_n}$ denote 
the Weil representation of $\oU_n(\bF_q)$ over $\ol{\bQ}_{\ell}$ 
(\cf \cite[Theorem 4.9.2]{GerWeil}). 

Let $\cO$ be the ring of 
integers in an algebraic extension of $\bQ_{\ell}$. 
Let $\fm$ be the maximal ideal of $\cO$. 
We set $\mathbb{F}=\mathcal{O}/\fm$. 

Let $\Lambda \in \{ \overline{\mathbb{Q}}_{\ell}, \mathcal{O},\mathbb{F} \}$. 
For a separated and of finite type scheme $Y$ 
over $\mathbb{F}_q$ which admits a left action of a finite group $G$, 
let $G$ act on $H_{\mathrm{c}}^i(Y_{\overline{\mathbb{F}}_q},\Lambda )$ 
as $(g^\ast)^{-1}$ for $g \in G$. 
We put 
\[
 V_n = H_{\mathrm{c}}^{n-1}
 (\wt{Y}_{n,\overline{\mathbb{F}}_q},\overline{\mathbb{Q}}_{\ell}). 
\]
For $\chi \in \Hom (\mu_{q-1},\Lambda^{\times})$, 
let $\mathscr{K}_{\chi}$ denote the 
$\Lambda$-sheaf on 
$Y_{n,\bF_{q^2}}$ defined by $\chi^{-1}$ and the covering 
\eqref{torsor1}. 
For $\chi \in \mu_{q-1}^{\vee}$, we have 
$V_n[\chi] = H_{\mathrm{c}}^{n-1}
 (Y_{n,\overline{\mathbb{F}}_q},\mathscr{K}_{\chi})$, 
which is the middle degree cohomology in 
a Deligne--Lusztig induction by 
\cite[(5.1), \S 6.1]{ITGeomHW}. 

Let $X_n$ be the affine smooth variety over $\mathbb{F}_{q^2}$
defined by 
\[
 z^q+z=\sum_{i=1}^n x_i^{q+1}  
\]
in $\mathbb{A}_{\mathbb{F}_{q^2}}^{n+1}
=\Spec \mathbb{F}_{q^2}[x_1,\ldots,x_n,z]$.  
Let $\oU_n(\bF_q)$ act on $X_n$ by 
\begin{equation*}
 X_n \to X_n;\ (v,z) \mapsto (g v,z) \quad \textrm{for $g \in \oU_n(\bF_q)$}, 
\end{equation*}
where we regard $v=(x_i)$ as a column vector. 
We put 
$\bF_{q,\varepsilon} =\{ a \in \bF_{q^2} \mid a +\varepsilon a^q =0\}$. 
We sometimes abbreviate $\pm 1$ as $\pm$. 
Let $\mathbb{F}_{q,+}$ act on $X_n$ by 
$z \mapsto z+a$ for $a \in \mathbb{F}_{q,+}$. 

Let 
$\psi \in \Hom (\bF_{q,\varepsilon},\Lambda^{\times}) \setminus \{1\}$.  
Let $\mathscr{L}_{\psi}$ denote the $\Lambda$-sheaf associated to
$\psi^{-1}$ and $z^q+\varepsilon z=t$ on 
$\mathbb{A}^1_{\mathbb{F}_q}=\Spec\mathbb{F}_q[t]$. 
We consider the morphism 
\begin{equation*}
\pi \colon \mathbb{A}_{\mathbb{F}_q}^n 
\to \mathbb{A}_{\mathbb{F}_q}^1;\
(x_i)_{1 \leq i \leq n} \mapsto \sum_{i=1}^n x_i^{q+1}. 
\end{equation*}
Then we have an isomorphism 
\begin{equation}\label{XLpsi}
 H_{\mathrm{c}}^i(X_{n,\ol{\bF}_q},\Lambda)[\psi] 
 \simeq 
 H_{\mathrm{c}}^i(\bA^n,\pi^\ast \mathscr{L}_{\psi}) 
\end{equation}
for $i \geq 0$. 

\begin{prop}\label{prop:Vn}
Assume that $n \geq 2$. 
\begin{enumerate}
\item\label{en:Vomega}
We have 
$V_n \simeq \omega_{\oU_n}$ as representations of 
$\oU_n(\bF_q)$. 
\item\label{en:Vdim}
For $\chi \in \mu_{q+1}^{\vee}$, we have 
\begin{align*}
\dim V_n[\chi]
=\begin{cases}
\displaystyle \frac{q^n+(-1)^n q}{q+1} & \textrm{if $\chi=1$}, \\
\displaystyle \frac{q^n-(-1)^n}{q+1} & \textrm{if $\chi \neq 1$}. 
\end{cases}
\end{align*}
The $\oU_n(\bF_q)$-representations 
$\{V_n[\chi]\}_{\chi \in \mu_{q+1}^{\vee}}$ are irreducible and distinct. Moreover, only 
$V_n[1]$ is unipotent 
as a $\oU_n(\bF_q)$-representation. 
\end{enumerate}
\end{prop}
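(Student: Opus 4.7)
The plan is to decompose $V_n$ via the central $\mu_{q+1}$-action coming from the torsor \eqref{torsor1}, identify each isotypic piece with the middle cohomology of a Coxeter-type Deligne--Lusztig variety for $U_n$, and then extract both claims from standard Deligne--Lusztig theory combined with the known character of $\omega_{U_n}$.

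First, the $\mu_{q+1}$-action making \eqref{torsor1} a torsor is the scaling $\zeta \cdot (x_i) = (\zeta x_i)$, which coincides with the action of the center $Z(U_n(\bF_q)) \simeq \mu_{q+1}$. Hence
\[
 V_n = \bigoplus_{\chi \in \mu_{q+1}^{\vee}} V_n[\chi], \qquad V_n[\chi] = H^{n-1}_{\mathrm{c}}(Y_{n,\ol{\bF}_q}, \mathscr{K}_{\chi}),
\]
with each $V_n[\chi]$ of central character $\chi$; this already forces pairwise non-isomorphism once each piece is shown to be irreducible. For \ref{en:Vdim}, the discussion preceding the statement identifies $V_n[\chi]$ with the middle-degree cohomology of the Deligne--Lusztig variety for a Coxeter maximal torus $T \subset U_n$ paired with a character of $T(\bF_q)$ extending $\chi$ (by (4.1) and \S 5.1 of \cite{ITGeomHW}). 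Deligne--Lusztig theory for Coxeter tori then implies that $(-1)^{n-1} V_n[\chi]$ is a genuine irreducible $U_n(\bF_q)$-representation; substituting the Coxeter-torus order $q^n - (-1)^n$ into the standard degree formula recovers the two displayed dimension formulas. The unipotency of $V_n[1]$ follows since $R_T^{U_n}(\widetilde{\chi})$ lies in the unipotent Lusztig series exactly when $\widetilde{\chi}$ is trivial.

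For \ref{en:Vomega}, I would compare with the geometric realization of $\omega_{U_n}$ on $H^n_{\mathrm{c}}(X_{n,\ol{\bF}_q},\ol{\bQ}_\ell)[\psi]$. By \eqref{XLpsi} and the Leray spectral sequence for $\pi \colon \bA^n \to \bA^1$, the fibers of $\pi$ over $t \in \bG_{\mathrm{m}}$ are $\mu_{q+1}$-twists of $\widetilde{Y}_n$ via the scaling $x_i \mapsto x_i/s$ with $s^{q+1}=t$; consequently $R^{n-1}\pi_!\ol{\bQ}_\ell |_{\bG_{\mathrm{m}}}$ decomposes as $\bigoplus_\chi \mathscr{K}_{\chi} \otimes V_n[\chi]$, while all other $R^q\pi_!$ vanish on $\bG_{\mathrm{m}}$ by Artin vanishing for the affine smooth fiber $\widetilde{Y}_n$ of dimension $n-1$. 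A Gauss-sum computation for $H^\bullet_{\mathrm{c}}(\bG_{\mathrm{m}}, \mathscr{K}_{\chi} \otimes \mathscr{L}_\psi)$ (one-dimensional in degree $1$, zero otherwise) then produces $H^n_{\mathrm{c}}(X_{n,\ol{\bF}_q},\ol{\bQ}_\ell)[\psi] \simeq V_n$ as $U_n(\bF_q)$-representations, which identifies $V_n$ with $\omega_{U_n}$ via the construction in \cite{GerWeil}.

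\textbf{Main obstacle.} The main difficulty lies in \ref{en:Vomega}: controlling via excision the contribution of the singular fiber $\pi^{-1}(0)$ (the affine cone over the Fermat hypersurface $S_n$) to $H^n_{\mathrm{c}}(\bA^n,\pi^\ast\mathscr{L}_\psi)$, and checking that the Kummer--Gauss decomposition assembles into a $U_n(\bF_q)$-equivariant isomorphism rather than a mere matching of central characters. Once that is in place, \ref{en:Vdim} is a routine translation of the Deligne--Lusztig dictionary for a Coxeter torus.
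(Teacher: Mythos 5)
Your proposal runs into two concrete problems, one in each part, and the overall route diverges from the paper's in a way that introduces errors rather than simplifications.

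For part \ref{en:Vdim}, the identification of $V_n[\chi]$ with a single Coxeter Deligne--Lusztig induction does not hold. The Deligne--Lusztig torsor attached to a (twisted) Coxeter torus $T$ of $U_n$ has Galois group $T^F$ of order $q^n-(-1)^n$, whereas $\widetilde{Y}_n \to Y_n$ is only a $\mu_{q+1}$-torsor; there is no single character of $T^F$ ``extending'' $\chi \in \mu_{q+1}^{\vee}$, and even if there were, the standard degree formula $\pm|U_n(\bF_q)|_{p'}/|T^F|$ would yield $\prod_{i=1}^{n-1}(q^i-(-1)^i)$, not the stated $(q^n\pm\cdot)/(q+1)$. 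The correct source of those dimensions is the centralizer $U_1\times U_{n-1}$ of the relevant semisimple element in $U_n^{\ast}(\bF_q)$ (this is exactly the numerics that appear later in Lemma \ref{lem:SUmod} and Lemma \ref{lem:lirr}). The paper sidesteps this entirely by deducing \ref{en:Vdim} \emph{from} \ref{en:Vomega}, i.e.\ from G\'erardin's explicit description of $\omega_{U_n}$ together with the identity \eqref{eq:VnA}; your attempt to prove \ref{en:Vdim} independently of \ref{en:Vomega} is the wrong order and the degree formula used is wrong.

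For part \ref{en:Vomega}, the Leray/Kummer strategy is reasonable in outline, but you explicitly flag the contribution of the singular fiber $\pi^{-1}(0)$ as an unresolved ``main obstacle.'' That obstacle is precisely the nontrivial content that the paper imports from \cite[Lemma 3.3, Corollary 3.6, Lemma 3.7 (2)]{ITGeomHW} to establish \eqref{eq:VnA}; with it left open, the proposal does not constitute a proof of \ref{en:Vomega}. Additionally, the claim that ``all other $R^q\pi_!$ vanish on $\bG_{\mathrm{m}}$'' is not right as stated: Artin vanishing kills $R^q\pi_!$ only for $q<n-1$; the higher $R^q\pi_!$ do not vanish and one must argue separately (using $H^p_{\mathrm{c}}(\bG_{\mathrm{m}},-)=0$ for $p\neq 1$ on nonconstant Kummer--Artin--Schreier pieces) that only $q=n-1$ contributes to $H^n_{\mathrm{c}}$.
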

\begin{proof}
We have 
\begin{equation}\label{eq:VnA}
 V_n \simeq \bigoplus_{\chi \in \mu_{q+1}^{\vee}} 
 H_{\mathrm{c}}^{n-1}
 (Y_{n,\overline{\mathbb{F}}_q},\mathscr{K}_{\chi}) 
 \simeq 
 H_{\mathrm{c}}^n (\mathbb{A}^n,\pi^\ast \mathscr{L}_{\psi}) 
\end{equation}
as representations of $\oU_n(\bF_q)$ 
by \cite[Lemma 4.3, Corollary 4.6, Lemma 4.7 (2)]{ITGeomHW}. 
The claim \ref{en:Vomega} follows from 
\eqref{XLpsi}, \eqref{eq:VnA} and 
\cite[(2.6), Theorem 2.5]{ITGeomHW}. 
The claim \ref{en:Vdim} follows from the claim \ref{en:Vomega}, 
\eqref{eq:VnA} and 
\cite[Lemma 4.2, Corollary 6.2]{ITGeomHW}. 
\end{proof}

\subsection{General facts on \'{e}tale cohomology}
\label{Pre3}
We recall a basic fact on cohomology of an affine smooth variety, which 
will be used frequently. 
\begin{lem}\label{affine}
Let $X$ be an affine smooth variety over 
$\overline{\mathbb{F}}_q$ of dimension 
$d$. Let $\ell \neq p$. 
Let $F$ be a finite extension of 
$\mathbb{Q}_{\ell}$. Let $\mathcal{O}_F$
be the ring of integers of $F$. Let $\kappa_F$ be the residue field of
$\mathcal{O}_F$. 
Let $\Lambda \in \{\mathcal{O}_F,\kappa_F\}$. 
Suppose that $\mathscr{F}$ is 
a smooth $\Lambda$-sheaf 
on $X$. 
\begin{enumerate}
\item\label{en:affv}
Assume $\Lambda=\kappa_F$. Then we have 
$H_{\mathrm{c}}^i(X,\mathscr{F})=0$ for 
$i<d$. 
\item\label{en:frmid}
Assume $\Lambda=\mathcal{O}_F$. 
The middle cohomology 
$H_{\mathrm{c}}^d(X,\mathscr{F})$ is a finitely generated free  
$\mathcal{O}_F$-module. 
\end{enumerate}
\end{lem}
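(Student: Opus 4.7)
The plan is to deduce both parts from Artin's affine vanishing theorem together with Poincar\'e duality, using only standard facts about \'etale cohomology of constructible sheaves.

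For part \ref{en:affv}: since $X$ is affine of dimension $d$ over the algebraically closed field $\ol{\bF}_q$ and $\kappa_F$ is a finite field of characteristic $\ell \neq p$, Artin's vanishing theorem (SGA~4, XIV.3.1) applies to any constructible $\kappa_F$-sheaf $\mathscr{G}$ on $X$:
$H^i(X,\mathscr{G})=0$ for $i>d$.
Apply this to $\mathscr{G}=\mathscr{F}^{\vee}(d)$, which is again a smooth (hence constructible) $\kappa_F$-sheaf on $X$. Poincar\'e duality for the smooth variety $X$ (valid with torsion coefficients prime to $p$) then gives a perfect pairing
\[
 H_{\mathrm{c}}^i(X,\mathscr{F}) \times H^{2d-i}(X,\mathscr{F}^{\vee}(d)) \to \kappa_F,
\]
so the vanishing of the right factor for $2d-i>d$ translates into $H_{\mathrm{c}}^i(X,\mathscr{F})=0$ for $i<d$, as desired.

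For part \ref{en:frmid}: let $\varpi \in \mathcal{O}_F$ be a uniformizer. By the standard finiteness theorem for compactly supported cohomology of constructible $\mathcal{O}_F$-sheaves on a separated finite-type scheme over a separably closed field of characteristic $\neq \ell$, the module $H_{\mathrm{c}}^d(X,\mathscr{F})$ is finitely generated over $\mathcal{O}_F$. Since $\mathcal{O}_F$ is a discrete valuation ring, it suffices to show the module is $\varpi$-torsion-free. Because $\mathscr{F}$ is smooth (in particular flat over $\mathcal{O}_F$), the sequence
\[
 0 \to \mathscr{F} \xrightarrow{\ \varpi\ } \mathscr{F} \to \mathscr{F}/\varpi\mathscr{F} \to 0
\]
is short exact, and the quotient is a smooth $\kappa_F$-sheaf. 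The associated long exact sequence gives
\[
 H_{\mathrm{c}}^{d-1}(X,\mathscr{F}/\varpi\mathscr{F}) \to H_{\mathrm{c}}^d(X,\mathscr{F}) \xrightarrow{\ \varpi\ } H_{\mathrm{c}}^d(X,\mathscr{F}),
\]
and part \ref{en:affv} applied to $\mathscr{F}/\varpi\mathscr{F}$ makes the left term vanish. Hence multiplication by $\varpi$ on $H_{\mathrm{c}}^d(X,\mathscr{F})$ is injective, so the module has no $\varpi$-torsion, and a finitely generated torsion-free module over a discrete valuation ring is free.

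There is essentially no hard step here: the lemma is a textbook combination of Artin vanishing, Poincar\'e duality, and the Bockstein-style argument that deduces freeness from vanishing one degree below. The only care needed is to invoke the finiteness of cohomology in the $\mathcal{O}_F$ case (so that torsion-freeness suffices for freeness) and to note that the smoothness of $\mathscr{F}$ is precisely what makes the reduction mod $\varpi$ sequence short exact at the sheaf level, allowing part \ref{en:affv} to feed into part \ref{en:frmid}.
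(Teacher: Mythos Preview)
Your proof is correct and follows essentially the same approach as the paper: part \ref{en:affv} is deduced from Artin's affine vanishing combined with Poincar\'e duality, and part \ref{en:frmid} is obtained from the long exact sequence associated to reduction modulo a uniformizer, invoking part \ref{en:affv} to kill $H_{\mathrm{c}}^{d-1}(X,\mathscr{F}/\varpi)$ and concluding freeness from torsion-freeness plus finite generation over the DVR $\mathcal{O}_F$. Your write-up simply makes explicit the details (the specific SGA reference, the dual sheaf $\mathscr{F}^{\vee}(d)$, the role of smoothness in ensuring the short exact sequence at the sheaf level) that the paper leaves implicit.
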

\begin{proof}
The first assertion follows from 
affine vanishing and Poincar\'{e} duality. 
We show the second claim.  
We take a uniformizer $\varpi$ of $\mathcal{O}_F$. 
Then, we have an exact sequence 
\[
H_{\mathrm{c}}^{d-1}(X,\mathscr{F}/\varpi)
\to H_{\mathrm{c}}^{d}(X,\mathscr{F}) \xrightarrow{\varpi} 
H_{\mathrm{c}}^{d}(X,\mathscr{F}). 
\]
Since we have $H_{\mathrm{c}}^{d-1}(X,\mathscr{F}/\varpi)=0$ 
by the first claim, the $\varpi$-multiplication map is injective. 
Since $H_{\mathrm{c}}^{d}(X,\mathscr{F})$ is a finitely generated $\mathcal{O}_F$-module, 
the second claim follows. 
\end{proof}
We recall a well-known fact, which will be used in the proof of Proposition \ref{sol}. 
\begin{lem}\label{affine2}
Let the notation be as in Lemma \ref{affine}. 
Let $G$ be a finite group. 
Let $X \to Y$ be a $G$-torsor between $d$-dimensional affine smooth 
varieties over $\overline{\mathbb{F}}_q$. 
\begin{enumerate}
\item\label{en:Gmid}
We have an isomorphism 
$H_{\mathrm{c}}^d(X,\kappa_F)^G \simeq 
H_{\mathrm{c}}^d(Y,\kappa_F)$.
\item\label{en:Ginv}
Assume $\ell \nmid \lvert G \rvert$. 
Then we have an isomorphism 
$H_{\mathrm{c}}^i(X,\kappa_F)^G \simeq H_{\mathrm{c}}^i(Y,\kappa_F)$ for any $i$.
\end{enumerate}
\end{lem}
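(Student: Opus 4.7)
The plan is to deduce both parts from the Hochschild--Serre spectral sequence for compactly supported \'etale cohomology attached to the finite Galois cover $\pi \colon X \to Y$:
\[
 E_2^{p,q} = H^p(G, H_{\mathrm{c}}^q(X, \kappa_F)) \Longrightarrow H_{\mathrm{c}}^{p+q}(Y, \kappa_F).
\]
To set it up, I would use that $\pi$ is finite, so $R\pi_! \kappa_F = \pi_* \kappa_F$, and hence $R\Gamma_{\mathrm{c}}(X, \kappa_F) = R\Gamma_{\mathrm{c}}(Y, \pi_* \kappa_F)$. The sheaf $\pi_* \kappa_F$ is a local system of $\kappa_F[G]$-modules on $Y$, whose geometric stalks are isomorphic to the regular representation $\kappa_F[G]$. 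Since $\kappa_F[G] = \mathrm{Ind}_{\{e\}}^G \kappa_F$ is cohomologically trivial (Shapiro's lemma), $R(-)^G (\pi_* \kappa_F)$ is quasi-isomorphic to $\kappa_{F,Y}$ in degree zero. Applying the Grothendieck composition spectral sequence to $R(-)^G \circ R\Gamma_{\mathrm{c}}(Y, -)$ on the object $\pi_* \kappa_F$ yields the displayed spectral sequence.

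For \ref{en:Ginv}, the assumption $\ell \nmid |G|$ makes $(-)^G$ exact on $\kappa_F[G]$-modules, so $H^p(G, -) = 0$ for $p > 0$. The spectral sequence collapses on $E_2$ and gives $H_{\mathrm{c}}^q(X, \kappa_F)^G = E_\infty^{0,q} = H_{\mathrm{c}}^q(Y, \kappa_F)$ for every $q$.

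For \ref{en:Gmid}, I would apply Lemma \ref{affine}\ref{en:affv} to the affine smooth $d$-dimensional variety $X$, which kills $H_{\mathrm{c}}^q(X, \kappa_F)$ for all $q < d$, and hence $E_2^{p,q} = 0$ for all $q < d$. In total degree $p+q = d$ only $E_2^{0,d} = H_{\mathrm{c}}^d(X, \kappa_F)^G$ can be nonzero; every differential $d_r \colon E_r^{0,d} \to E_r^{r, d-r+1}$ lands in a position with $q < d$, and every differential into $E_r^{0,d}$ comes from a position with $p < 0$, so $E_\infty^{0,d} = H_{\mathrm{c}}^d(X, \kappa_F)^G$. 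The remaining graded pieces $E_\infty^{p, d-p}$ with $p \geq 1$ are subquotients of zero, so the abutment collapses to $H_{\mathrm{c}}^d(Y, \kappa_F) \cong H_{\mathrm{c}}^d(X, \kappa_F)^G$.

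The main obstacle is rigorously assembling the compactly supported Hochschild--Serre spectral sequence, as this is slightly less standard than its non-compactly-supported counterpart. A fully elementary alternative avoiding spectral sequences is to write the two short exact sequences of $\kappa_F[G]$-sheaves $0 \to \kappa_{F,Y} \xrightarrow{N} \pi_* \kappa_F \to Q \to 0$ (with $N = \sum_{g \in G} g$) and $0 \to I \to \pi_* \kappa_F \to \kappa_{F,Y} \to 0$ (trace), take $R\Gamma_{\mathrm{c}}(Y, -)$, and use Lemma \ref{affine}\ref{en:affv} to make the boundary $H_{\mathrm{c}}^{d-1}$-terms vanish; combined with the relation $\pi_*^c \circ \pi^*_c = |G|$ between trace and norm at the level of $H_{\mathrm{c}}^d$, this forces every $G$-invariant in $H_{\mathrm{c}}^d(X, \kappa_F)$ to lie in the image of $H_{\mathrm{c}}^d(Y, \kappa_F)$, yielding \ref{en:Gmid} directly.
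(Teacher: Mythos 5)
Your proof is correct and follows essentially the same route as the paper: the compactly supported Hochschild--Serre spectral sequence $E_2^{p,q}=H^p(G,H_{\mathrm{c}}^q(X,\kappa_F))\Rightarrow H_{\mathrm{c}}^{p+q}(Y,\kappa_F)$ (which the paper simply cites from Illusie), combined with the affine vanishing of Lemma \ref{affine} \ref{en:affv} for part \ref{en:Gmid} and the vanishing of higher group cohomology when $\ell\nmid\lvert G\rvert$ for part \ref{en:Ginv}. The extra detail you supply on assembling the spectral sequence, and the spectral-sequence-free alternative, are both fine but not needed beyond what the paper does.
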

\begin{proof}
As in \cite[Lemma 2.2]{IllBrEP}, 
we have a spectral sequence 
\[
E_2^{p,q}=H^p(G,H^q_{\mathrm{c}}(X,\kappa_F)) \Longrightarrow E^{p+q}
=H^{p+q}_{\mathrm{c}}(Y,\kappa_F). 
\]
Since we have $H^i_{\mathrm{c}}(X,\kappa_F)=0$ for $i<d$ 
by Lemma \ref{affine} \ref{en:affv}, 
we have an isomorphism $E_2^{0,d} \simeq E^d$. 
Hence the first claim follows. 

We show the second claim. 
For any $\kappa_F[G]$-module $M$, we have 
$H^i(G,M)=0$ for any $i>0$ by $\ell \nmid \lvert G \rvert$. 
Hence the second claim follows from the above spectral sequence. 
\end{proof}

\subsection{Lusztig series and $\ell$-blocks}\label{ssec:Lusbl}
We briefly recall several facts on 
Lusztig series (\cf \cite[\S7]{LusIrrcl}). 
We mainly follow \cite[Chapter 13]{DiMiRepLie}. 

Let $G$ be a connected reductive group over $\bF_q$. 
Let  $\mathrm{Irr}(G(\bF_q))$ denote the set 
of irreducible characters of $G(\bF_q)$ over $\ol{\bQ}_{\ell}$. 
Let $G^\ast$ be a connected reductive group over $\bF_q$ 
which is the dual of $G$ 
in the sense of \cite[13.10 Definition]{DiMiRepLie}. 

We fix an isomorphism 
$\ol{\bF}_q^{\times} \simeq (\bQ/\bZ)_{p'}$ 
and an embedding 
$\ol{\bF}_q^{\times} \hookrightarrow \ol{\bQ}_{\ell}^{\times}$, 
where $(\bQ/\bZ)_{p'}$ denotes the subgroup of 
$\bQ/\bZ$ consisting of the elements of order prime to $p$. 
Let $(s)$ be a geometric conjugacy class 
of a semisimple element $s \in G^{\ast}(\bF_q)$. 
As in \cite[13.16 Definition]{DiMiRepLie}, 
let $\mathcal{E}(G,(s))$ be the subset of 
$\mathrm{Irr}(G(\bF_q))$ which 
consists of irreducible constituents of a Deligne--Lusztig 
character $R^G_T(\theta)$, where 
$(T,\theta)$ is of the geometric conjugacy class 
associated to $(s)$ in the sense of 
\cite[13.2 Definition, 13.12 Proposition]{DiMiRepLie}. 
The subset $\mathcal{E}(G,(s))$ 
is called a Lusztig series associated to $(s)$.  
By \cite[13.17 Proposition]{DiMiRepLie}, 
$\mathrm{Irr}(G(\bF_q))$ is partitioned into 
Lusztig series. 
The following fact is well-known. 

\begin{lem}\label{lem:compE}
Let $f \colon G \to G'$ be a morphism between 
connected reductive groups over $\bF_q$ with 
a central connected kernel 
such that the image of $f$ contains the derived group of $G'$. 
Let $G^*$ and $G'^*$ be the dual groups of $G$ and $G'$. 
Let $s \in G^*(\bF_q)$ be the image of a semisimple element 
$s'$ in $G'^*(\bF_q)$. 
Then the irreducible constituents of 
the inflations under $f$ of elements in $\cE (G',(s'))$ 
are in $\cE (G,(s))$. 
\end{lem}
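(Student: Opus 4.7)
The plan is to deduce the statement from the functoriality of Deligne--Lusztig induction under $f$, combined with the functoriality of the parametrization of geometric conjugacy classes by semisimple elements of the dual group.

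First I would invoke the standard fact that a morphism $f \colon G \to G'$ with central connected kernel whose image contains $[G',G']$ induces, via duality of root data, a morphism $f^* \colon G'^* \to G^*$ in the opposite direction with analogous properties. Concretely, for any $F$-stable maximal torus $T'$ of $G'$, the identity component $T$ of $f^{-1}(T')$ is an $F$-stable maximal torus of $G$, and $f|_T \colon T \to T'$ induces on characters a pullback which, under the canonical identification of $T(\bF_q)^\vee$ with rational points of the dual torus, corresponds to the restriction of $f^*$ to the dual of $T'$.

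Given $\pi' \in \cE(G',(s'))$, one can choose such a pair $(T',\theta')$ with $\theta'$ in the geometric conjugacy class of $(s')$ and $\pi'$ a constituent of $R^{G'}_{T'}(\theta')$. Setting $\theta = \theta' \circ f|_{T(\bF_q)}$, I would then verify two claims: (i) the inflation along $f$ of $R^{G'}_{T'}(\theta')$ equals $R^G_T(\theta)$ as a virtual character of $G(\bF_q)$; and (ii) the pair $(T,\theta)$ lies in the geometric conjugacy class corresponding to $(s) = (f^*(s'))$. Granting both, every irreducible constituent of the inflation of $\pi'$ appears in $R^G_T(\theta)$ and hence belongs to $\cE(G,(s))$ by the definition recalled in \S \ref{ssec:Lusbl}.

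Step (ii) is immediate from the functoriality described in the first paragraph, since the semisimple class associated to $(T,\theta)$ is the image under $f^*$ of that associated to $(T',\theta')$. The main obstacle is step (i), the commutation of Deligne--Lusztig induction with inflation along $f$. I would handle this by considering the natural morphism of Deligne--Lusztig varieties induced by $f$: since $\ker f$ is a connected central subgroup, it interacts compatibly with the Lang torsors defining the two varieties, and the projection formula applied to the Kummer sheaves attached to $\theta$ and $\theta'$ yields the required identity of virtual characters. In practice one may simply invoke the standard functoriality results for Deligne--Lusztig induction (\cf \cite[Chapter 13]{DiMiRepLie}) rather than redo the geometric computation.
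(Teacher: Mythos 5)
Your proposal is correct and is essentially the same as the paper's proof, which simply cites \cite[13.22 Proposition]{DiMiRepLie}: that proposition encapsulates precisely the two compatibilities you verify — that Deligne--Lusztig induction commutes with inflation/restriction along $f$, and that the parametrization of geometric conjugacy classes of pairs $(T,\theta)$ by semisimple classes in the dual group is compatible with $f^*$ — so your argument is an unpacking of the cited result rather than a different route.
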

\begin{proof}
This follows from \cite[13.22 Proposition]{DiMiRepLie}. 
\end{proof}

For a semisimple $\ell'$-element 
$s \in G^\ast (\bF_q)$, we define 
\begin{equation*}
\mathcal{E}_{\ell}(G,(s))=\bigcup_{t \in 
(C_{G^\ast (\bF_q)}(s))_{\ell}} \mathcal{E}(G,(st)). 
\end{equation*}
It is known that this set is a union of $\ell$-blocks 
by \cite[2.2 Th\'eor\`eme]{BrMiBlL}. 
Any block contained in $\mathcal{E}_{\ell}(G,(1))$
is called a unipotent $\ell$-block. 

\begin{lem}\label{lem:diflb}
Let $s$ and $s'$ be semisimple $\ell'$-elements of 
$G^\ast (\bF_q)$. 
Assume that $s$ and $s'$ are not geometrically conjugate. 
Let $\rho \in \cE (G,(s))$ and $\rho' \in \cE (G,(s'))$. 
Then $\rho \notin \cE_{\ell} (G,(s'))$. 
In particular, $\rho$ and $\rho'$ are in different $\ell$-blocks. 
\end{lem}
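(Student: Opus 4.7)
The plan is to establish the first assertion by a partition argument combined with uniqueness of the Jordan decomposition, and then deduce the ``in particular'' claim from the Brou\'e--Michel theorem already invoked in this subsection.

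For the main statement, I would argue by contradiction. Suppose $\rho \in \cE_{\ell}(G,(s'))$. By the very definition of $\cE_{\ell}(G,(s'))$, there exists a semisimple $\ell$-element $t \in C_{G^{\ast}(\bF_q)}(s')$ with $\rho \in \cE(G,(s't))$. Since the Lusztig series $\{\cE(G,(u))\}_{(u)}$ partition $\mathrm{Irr}(G(\bF_q))$ by \cite[13.17 Proposition]{DiMiRepLie}, the two Lusztig series $\cE(G,(s))$ and $\cE(G,(s't))$ containing $\rho$ must coincide, i.e.\ $(s) = (s't)$ as geometric conjugacy classes of semisimple elements of $G^{\ast}(\bF_q)$.

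Now I would invoke uniqueness of Jordan decomposition applied to commuting semisimple elements of coprime order: since $s'$ has order prime to $\ell$ and $t$ has $\ell$-power order, the $\ell'$-part of the semisimple element $s't$ is $s'$. On the other hand, $s$ itself is an $\ell'$-element, so it equals its own $\ell'$-part. Geometric conjugation preserves the Jordan decomposition and hence the $\ell'$-part, so $(s) = (s't)$ forces $(s) = (s')$, contradicting the hypothesis that $s$ and $s'$ are not geometrically conjugate.

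For the ``in particular'' statement, I would use that by \cite[2.2 Th\'eor\`eme]{BrMiBlL} (recalled just before Lemma \ref{lem:diflb}), the set $\cE_{\ell}(G,(s))$ is a union of $\ell$-blocks. Hence the $\ell$-block of $\rho$ is contained entirely in $\cE_{\ell}(G,(s))$; if $\rho'$ lay in the same $\ell$-block as $\rho$, then $\rho' \in \cE_{\ell}(G,(s))$, which together with $\rho' \in \cE(G,(s'))$ contradicts the first assertion with the roles of $s$ and $s'$ interchanged. There is no serious obstacle here: the only subtle point is the Jordan decomposition identification of the $\ell'$-part of $s't$, which is entirely standard.
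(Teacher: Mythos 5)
Your proof is correct and follows essentially the same route as the paper's: argue by contradiction, use the partition into Lusztig series to conclude $(s)=(s't)$, and then exploit the $\ell$--$\ell'$ structure of the commuting product $s't$ to force $(s)=(s')$. The only cosmetic difference is in how the last step is phrased: the paper raises to the $\ell^b$-th power (where $\ell^b$ is the order of $s''$) and notes $(s's'')^{\ell^b}=s'^{\ell^b}$, then undoes the power using that $s,s'$ are $\ell'$-elements, whereas you invoke directly that geometric conjugation preserves $\ell'$-parts and that the $\ell'$-part of $s't$ is $s'$. These are the same computation expressed in different language. Your explicit spelling-out of the ``in particular'' via Brou\'e--Michel is also exactly what the paper intends (and leaves implicit).
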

\begin{proof}
Assume $\rho \in \cE_{\ell} (G,(s'))$.  
Then there exists an element 
$s'' \in C_{G^{\ast}(\bF_q)}(s')$ 
of $\ell$-power order such that 
$\rho \in \cE(G,(s's''))$. 
Since $\rho \in \cE(G,(s))$, 
the elements $s$ and $s' s''$ are geometrically conjugate 
by \cite[13.17 Proposition]{DiMiRepLie}. 
By the assumption that $s$ and $s'$ are not geometrically conjugate, $s'' \neq 1$. Hence the order of $s$ and $s's''$ are different because $s$ and $s'$ are $\ell'$-elements and $s'' \in C_{G^{\ast}(\bF_q)}(s')$. This is a contradiction. 
\end{proof}

For an irreducible representation $\pi$ of a finite group, 
let $\overline{\pi}$ denote the Brauer character associated to a mod $\ell$ reduction of $\pi$. 
For any integer $m \geq 1$, 
let $m_p$ be the largest power of $p$ dividing $m$ 
and $m_{p'}=m/m_p$. 

\begin{lem}\label{lem:lirr}
Let $\rho$ be an irreducible ordinary character of $G(\bF_q)$. 
Assume $\rho \in \mathcal{E}_{\ell}(G,(s))$ for 
some semisimple $\ell'$-element $s$ of $G^\ast (\bF_q)$. 
If 
\[
 \frac{\lvert G(\bF_q) \rvert_{p'}}{\lvert C_{G^\ast (\bF_q)}(s) \rvert_{p'}}=\rho(1),
\]
then $\overline{\rho}$ is an irreducible Brauer character. 
\end{lem}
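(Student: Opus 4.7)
The plan is to use the Jordan decomposition of characters to show that $\rho$ has minimum degree among the ordinary characters in its $\ell$-block, and then to appeal to the unitriangularity of the decomposition matrix on a basic set to conclude that $\overline{\rho}$ is irreducible.

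By the definition of $\cE_{\ell}(G,(s))$, there is an $\ell$-element $t \in C_{G^\ast (\bF_q)}(s)$ with $\rho \in \cE(G,(st))$. Since $s$ has order coprime to $\ell$ and $t$ has $\ell$-power order, the Chinese remainder theorem expresses both $s$ and $t$ as powers of $st$; in particular $C_{G^\ast}(st) \subseteq C_{G^\ast}(s)$. The Jordan decomposition of characters gives
\[
 \rho(1) = \frac{\lvert G(\bF_q) \rvert_{p'}}{\lvert C_{G^\ast (\bF_q)}(st) \rvert_{p'}} \cdot \psi(1)
\]
for a unique unipotent character $\psi$ of $C_{G^\ast}(st)(\bF_q)$. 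Combined with $\psi(1) \geq 1$, $\lvert C_{G^\ast (\bF_q)}(st) \rvert_{p'} \leq \lvert C_{G^\ast (\bF_q)}(s) \rvert_{p'}$, and the hypothesis on $\rho(1)$, this forces $\psi = 1$ and $\lvert C_{G^\ast (\bF_q)}(st) \rvert_{p'} = \lvert C_{G^\ast (\bF_q)}(s) \rvert_{p'}$, so $\rho$ is a ``semisimple'' character. Next I would note that since $\cE_{\ell}(G,(s))$ is a union of $\ell$-blocks by \cite[2.2 Th\'eor\`eme]{BrMiBlL}, the block $B$ of $\rho$ is contained in $\cE_{\ell}(G,(s))$; hence any $\rho' \in B$ lies in $\cE(G,(st'))$ for some $\ell$-element $t' \in C_{G^\ast (\bF_q)}(s)$, and repeating the above computation gives $\rho'(1) \geq \lvert G(\bF_q) \rvert_{p'}/\lvert C_{G^\ast (\bF_q)}(s) \rvert_{p'} = \rho(1)$. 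Thus $\rho(1)$ is minimal in $B$.

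The proof would conclude by invoking the basic set theorem of Geck--Hiss: the mod $\ell$ reductions of characters in $\cE_{\ell}(G,(s))$ form a $\bZ$-basis of the Grothendieck group of Brauer characters of its union of $\ell$-blocks, and the decomposition matrix is lower unitriangular with respect to any ordering refining the degree. Since $\rho$ has minimum degree in $B$, the reduction $\overline{\rho}$ must coincide with the single basic-set element indexed by $\rho$, hence is an irreducible Brauer character. The main obstacle is the unitriangularity statement for non-unipotent Lusztig series; the Jordan decomposition input is formal, whereas the basic set result in general reduces to the unipotent case via the Bonnaf\'e--Rouquier Morita equivalence, which is the substantive ingredient of the argument.
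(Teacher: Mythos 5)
Your route is genuinely different from the paper's, which is short and direct: assume $\overline{\rho}$ is reducible, take an irreducible Brauer constituent $\chi$ of $\overline{\rho}$, and cite Hiss--Malle \cite[Proposition 1]{HiMaLowuni}, which gives the lower bound $\lvert G(\bF_q)\rvert_{p'}/\lvert C_{G^\ast(\bF_q)}(s)\rvert_{p'} \leq \chi(1)$ for \emph{irreducible Brauer characters} $\chi$ lying in $\cE_\ell(G,(s))$; combined with $\chi(1) < \rho(1)$ this contradicts the hypothesis. The crucial ingredient is thus a degree lower bound for Brauer characters, not for ordinary ones.

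Your argument establishes the analogous bound only for \emph{ordinary} characters in the block via Lusztig's degree formula (a correct but weaker statement), and then tries to upgrade it through the basic-set machinery. This upgrade has a gap. Unitriangularity on the basic set $\cE(G,(s))$ gives $\phi_i(1) \leq \rho_i(1)$ for the basic-set character $\rho_i$ sitting above the Brauer character $\phi_i$, but it provides no lower bound on $\phi_i(1)$: with basic-set degrees $m < m'$ and decomposition matrix $\left(\begin{smallmatrix}1&0\\1&1\end{smallmatrix}\right)$ one gets $\phi_2(1) = m'-m$, which can be $< m$. So ``all ordinary degrees $\geq m$'' does not by itself yield ``all Brauer degrees $\geq m$''. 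A second gap: your final step applies only when $\rho$ is itself a basic-set element, i.e.\ $\rho \in \cE(G,(s))$ (and note the basic set is $\cE(G,(s))$, not $\cE_\ell(G,(s))$ as you wrote, the latter being too large to be a basic set). From the degree hypothesis you deduce only that $\rho$ is a semisimple character in some $\cE(G,(st))$; if $t \neq 1$ then $\rho$ is not a basic-set element and you cannot read off $\overline{\rho}$ from the unitriangular matrix. The Hiss--Malle citation handles both issues at once, and also absorbs the hypotheses on $\ell$ (goodness, control of component groups) that the Geck--Hiss and Bonnaf\'e--Rouquier results you invoke would require.
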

\begin{proof}
Assume that $\overline{\rho}$ is not irreducible and 
take an irreducible Brauer subcharacter $\chi$ of $\overline{\rho}$. 
Then 
\[
 \frac{\lvert G(\bF_q) \rvert_{p'}}{\lvert C_{G^\ast (\bF_q)}(s) \rvert_{p'}} 
 \leq \chi(1) < \rho (1) 
\]
by \cite[Proposition 1]{HiMaLowuni}. 
This is a contradiction. 
\end{proof}

We recall some results in \cite[\S 6]{HiMaLowuni}. 
Take a nonisotropic vector $c$ in the standard representation of 
$\oU_n(\bF_q)$. 
Let $\hat{S}$ be the subgroup of $\oU_n(\bF_q)$ 
fixing the line $\langle c \rangle$ and inducing the identity 
on the orthogonal complement of $\langle c \rangle$. 
Let $S$ be the image of $\hat{S}$ in $\PU_n(\bF_q)$. 
Then $\hat{S}$ and $S$ are cyclic groups of order $q+1$. 
We follow \cite[\S 6]{HiMaLowuni} for a definition of 
the Weil characters of $\SU_n(\bF_q)$. 

\begin{lem}\label{lem:SUmod}
Assume that $n \geq 3$. 
The Weil characters of $\SU_n(\bF_q)$ 
consist of 
one unipotent character $\chi_{(n-1,1)} \in \cE (\SU_n,(1))$ 
of degree 
$(q^n +(-1)^n q)/(q+1)$ and 
$q$ non-unipotent characters 
$\chi_{s,(n-1)} \in \cE (\SU_n,(s))$ of degree 
$(q^n -(-1)^n)/(q+1)$ for $s \in S \setminus \{ 1\}$, 
where the elements of $S$ give different geometrically conjugacy classes. 
We put 
\[
 N=\min \left\{ \frac{q^n +(-1)^n q}{q+1} , \frac{q^n -(-1)^n}{q+1} \right\} 
\]
Let $V$ be a Weil character of $\SU_n(\bF_q)$. 
If the degree of $V$ is $N$, then $\ol{V}$ is irreducible. 
If the degree of $V$ is $N+1$, then $\ol{V}$ is irreducible 
or has two irreducible constituents, one of which is trivial. 
Further we have the following: 
\begin{enumerate}
\item 
Let $n \geq 4$ be even. 
Then $\ol{\chi}_{(n-1,1)}$ is irreducible if and only if 
$\ell \nmid q+1$. 
\item 
Let $n \geq 3$ be odd. 
Then $\ol{\chi}_{s,(n-1)}$ is irreducible if and only if 
the order of $s$ is not a power of $\ell$. 
\end{enumerate}
\end{lem}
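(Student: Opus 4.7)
The plan is to combine the Weil representation decomposition of Proposition~\ref{prop:Vn} with the general block-theoretic machinery of Section~\ref{ssec:Lusbl}, importing the explicit degree bound and unipotent-block input from [HiMaLowuni, \S 6].

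The first step is to identify the Weil characters of $\mathit{SU}_n(\bF_q)$ with the restrictions of the central-character components $V_n[\chi]$ to $\mathit{SU}_n(\bF_q)$. Since the center $\mu_{q+1}$ of $U_n(\bF_q)$ acts on $V_n[\chi]$ by the scalar $\chi$ and meets $\mathit{SU}_n(\bF_q)$ non-trivially, distinct $\chi$ give pairwise non-isomorphic restrictions; the degrees then follow from Proposition~\ref{prop:Vn}\ref{en:Vdim}, and this matches the definition of the Weil characters in [HiMaLowuni, \S 6]. By Lemma~\ref{lem:compE} applied to $\mathit{SU}_n \hookrightarrow U_n$, the restriction of the unipotent component $V_n[1]$ remains unipotent, yielding $\chi_{(n-1,1)} \in \cE(\mathit{SU}_n,(1))$; for $\chi \neq 1$ the dual-group data realize the corresponding Lusztig parameter as an element of the image of $S$ in $\mathit{SU}_n^{\ast}(\bF_q)$, and a direct computation in this torus shows that the $q$ non-trivial elements of $S$ give $q$ distinct geometric conjugacy classes.

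For the generic modular statement, the centralizer in the dual group of any $s \in S \setminus \{1\}$ is a Levi of type $(n-1,1)$, so $|\mathit{SU}_n(\bF_q)|_{p'}/|C_{\mathit{SU}_n^{\ast}(\bF_q)}(s)|_{p'}$ equals $(q^n-(-1)^n)/(q+1)$, and the analogous quantity for the unipotent Lusztig series is $(q^n + (-1)^n q)/(q+1)$. By Lemma~\ref{lem:lirr} together with the general lower bound of [HiMaLowuni, Proposition~1], any Weil character of degree exactly $N$ reduces to an irreducible Brauer character; any Weil character of degree $N+1$ has all irreducible Brauer constituents of degree $\geq N$, so its reduction has at most two constituents whose degrees sum to $N+1$, forcing one constituent to be trivial if the reduction is reducible.

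Finally, for the refinements, the question is when the Weil character of degree $N+1$ and the trivial character lie in the same $\ell$-block. In case (1), $n$ even, $\chi_{(n-1,1)}$ is unipotent of degree $N+1$, and the block classification for finite unitary groups places it in the principal unipotent block iff $\ell \mid q+1$, giving the asserted equivalence. In case (2), $n$ odd, $\chi_{s,(n-1)} \in \cE(\mathit{SU}_n,(s))$ has degree $N+1$; by Lemma~\ref{lem:diflb} it can share a block with the trivial character only if $\cE(\mathit{SU}_n,(s)) \subseteq \cE_{\ell}(\mathit{SU}_n,(1))$, i.e.\ only if the order of $s$ is a power of $\ell$. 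The main obstacle is verifying the converse in both cases---that the reduction actually splits when these divisibility conditions hold---which one handles by the explicit Brauer character computations of [HiMaLowuni, \S 6] rather than by pure block theory.
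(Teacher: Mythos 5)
The paper's own proof is considerably shorter than yours: it simply observes that \emph{everything} in the lemma except the claim that distinct elements of $S$ give distinct geometric conjugacy classes is already proved in \cite[p.~755]{HiMaLowuni}, and then disposes of that remaining claim by passing to the lifts $\hat{s},\hat{s}'\in\hat{S}$ and noting that geometric conjugacy modulo the center would force an eigenvalue coincidence that cannot happen. Your proposal, by contrast, tries to reconstruct the whole of \cite[\S 6]{HiMaLowuni} from the block-theoretic lemmas of \S\ref{ssec:Lusbl}, only to fall back on ``the explicit Brauer character computations of [HiMaLowuni, \S 6]'' for the decisive converse direction in parts (1)--(2) anyway. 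So in the end both arguments rest on the same external input, but yours introduces extra scaffolding that is not load-bearing.

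There are also two concrete gaps in the reconstruction. First, your justification that ``distinct $\chi$ give pairwise non-isomorphic restrictions'' --- because the center $\mu_{q+1}$ of $U_n(\bF_q)$ ``meets $\mathit{SU}_n(\bF_q)$ non-trivially'' --- is incorrect reasoning: the intersection $Z(U_n(\bF_q))\cap \mathit{SU}_n(\bF_q)=\mu_{\gcd(n,q+1)}$ is in general a proper subgroup of $\mu_{q+1}$, so two distinct $\chi$ can agree on it. Distinguishing the restrictions is exactly what the ``distinct geometric conjugacy classes'' argument is for, and the paper treats it as a genuine extra point to prove, not a consequence of central-character considerations; your plan inverts the logical dependence, since the precise identification of $V_n[\chi]|_{\mathit{SU}_n(\bF_q)}$ with $\chi_{s,(n-1)}$ is Lemma~\ref{lem:chis}, which \emph{follows from} Lemma~\ref{lem:SUmod}. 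Second, in the degree-$(N+1)$ case you assert that every non-trivial irreducible Brauer constituent has degree $\geq N$, but this is not an immediate consequence of the bound \cite[Proposition 1]{HiMaLowuni} as quoted in Lemma~\ref{lem:lirr}: that bound is relative to a fixed Lusztig series and, in the unipotent case (so $C_{G^\ast(\bF_q)}(1)=G^\ast(\bF_q)$), gives a trivial lower bound of $1$. One needs the finer minimal-degree results for non-trivial cross-characteristic $\mathit{SU}_n(\bF_q)$-modules, which is precisely the content of the cited pages of \cite{HiMaLowuni}.

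So, while your broad strategy (Lusztig series bookkeeping via Lemmas~\ref{lem:compE}, \ref{lem:diflb}, \ref{lem:lirr}, plus the degree bound) is in the right spirit and mirrors how \cite{HiMaLowuni} itself argues, as written it is both circular in places and reliant on the very reference you are trying to avoid; the paper's direct citation is the cleaner route.
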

\begin{proof}
Everything is explained in \cite[p.~755]{HiMaLowuni} 
except that the elements of 
$S$ give different geometrically conjugacy classes. 
Assume that different elements $s$ and $s'$ in $S$ are 
geometrically conjugate. 
Then their lifts $\hat{s}$ and $\hat{s}'$ in $\hat{S}$ 
are geometrically conjugate modulo the center. 
Considering the eigenvalues of $\hat{s}$ and $\hat{s}'$, 
we have a contradiction. 
\end{proof}

\begin{lem}\label{lem:chis}
Let $\chi \in \mu_{q+1}^{\vee} \setminus \{ 1 \}$. 
We view $\chi$ as a character of the diagonal torus 
$\oU_1(\bF_q)^n$ of $\oU_n (\bF_q)$ under the first projection. 
Let $\hat{s} \in \hat{S}$ be the element corresponding to 
$\chi$ by \cite[13.12 Proposition]{DiMiRepLie}. 
Let $s \in S$ be the image of $\hat{s}$. 
Then we have 
$V_n[1]|_{\SU_n(\bF_q)} =\chi_{(n-1,1)}$ 
and 
$V_n[\chi]|_{\SU_n(\bF_q)} =\chi_{s,(n-1)}$. 
\end{lem}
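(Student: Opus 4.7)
The plan is to combine the Deligne--Lusztig description of the isotypic parts $V_n[\chi']$ with a dimension matching against the list of Weil characters in Lemma \ref{lem:SUmod}, and then apply Lemma \ref{lem:compE} to pin down the Lusztig series of the restriction to $\mathit{SU}_n(\bF_q)$.

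First I would use that, by Proposition \ref{prop:Vn}\ref{en:Vomega}, $V_n \simeq \omega_{U_n}$, so the restriction of $V_n$ to $\mathit{SU}_n(\bF_q)$ is the Weil representation of $\mathit{SU}_n(\bF_q)$. By Lemma \ref{lem:SUmod}, this is a multiplicity-free direct sum of the $q+1$ Weil characters listed there (the sum of their degrees is $q^n$). On the other hand, the decomposition $V_n = \bigoplus_{\chi' \in \mu_{q+1}^{\vee}} V_n[\chi']$ into $\mu_{q+1}$-isotypic parts gives another expression as a direct sum of $q+1$ nonzero summands, with dimensions computed in Proposition \ref{prop:Vn}\ref{en:Vdim} that match the dimensions of the Weil characters of $\mathit{SU}_n(\bF_q)$ term-by-term (one of dimension $(q^n+(-1)^nq)/(q+1)$, the other $q$ of dimension $(q^n-(-1)^n)/(q+1)$). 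Semisimplicity then forces each $V_n[\chi']|_{\mathit{SU}_n(\bF_q)}$ to be a single Weil character of $\mathit{SU}_n(\bF_q)$.

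Next I would identify which Weil character corresponds to each $\chi'$ via Lusztig series. By the Deligne--Lusztig description of $V_n[\chi']$ recalled in \S \ref{Pre1} (using \cite[(4.1), \S 5.1]{ITGeomHW}), $V_n[\chi']$ is a constituent of the Deligne--Lusztig induction from the diagonal maximal torus $U_1(\bF_q)^n$ with character $\chi'$ pulled back under the first projection, so $V_n[\chi'] \in \cE(U_n,(\hat{s}_{\chi'}))$ by \cite[13.12 Proposition, 13.16 Definition]{DiMiRepLie}, where $\hat{s}_{\chi'} \in \hat{S}$ is the element corresponding to $\chi'$ (in particular $\hat{s}_1 = 1$ and $\hat{s}_{\chi} = \hat{s}$ for our fixed $\chi$). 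Applying Lemma \ref{lem:compE} to the inclusion $f \colon \mathit{SU}_n \hookrightarrow U_n$, whose kernel is trivial (and therefore central connected) and whose image equals the derived subgroup $\mathit{SU}_n$ of $U_n$, the irreducible constituents of $V_n[\chi']|_{\mathit{SU}_n(\bF_q)}$ lie in $\cE(\mathit{SU}_n,(s_{\chi'}))$, where $s_{\chi'}$ is the image of $\hat{s}_{\chi'}$ under the dual morphism $U_n^{\ast} = U_n \to \mathit{PU}_n = (\mathit{SU}_n)^{\ast}$. Since each such restriction is already known to be a single Weil character, and since Lemma \ref{lem:SUmod} provides exactly one Weil character in each such series, we read off $V_n[1]|_{\mathit{SU}_n(\bF_q)} = \chi_{(n-1,1)}$ and $V_n[\chi]|_{\mathit{SU}_n(\bF_q)} = \chi_{s,(n-1)}$.

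The main obstacle is the first step of the second paragraph: one must unpack the Deligne--Lusztig interpretation from \cite{ITGeomHW} carefully enough to conclude that the relevant torus is exactly the diagonal $U_1^n$ with $\chi'$ pulled back via the first projection (rather than via some other embedding or a conjugate), so that the attached semisimple element is precisely the $\hat{s} \in \hat{S}$ specified in the statement. Once this compatibility is verified, the remainder is just dimension matching and a direct application of Lemma \ref{lem:compE}.
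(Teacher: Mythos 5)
Your argument is correct and matches the paper's approach: the paper's one-line proof ``This follows from Lemma~\ref{lem:compE} and Lemma~\ref{lem:SUmod}'' is exactly a compression of what you spell out, namely that each $V_n[\chi']|_{\mathit{SU}_n(\bF_q)}$ is a single Weil character (forced by the degree data of Proposition~\ref{prop:Vn} and Lemma~\ref{lem:SUmod}) lying in the Lusztig series $\cE(\mathit{SU}_n,(s_{\chi'}))$ identified via Lemma~\ref{lem:compE}, which contains exactly one Weil character.
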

\begin{proof}
This follows from Lemma \ref{lem:compE} and Lemma \ref{lem:SUmod}. 
\end{proof}

\section{Cohomology as representation of unitary group}\label{mod1}
In this section, we investigate mainly 
$H_{\mathrm{c}}^n(Y_{n,\overline{\mathbb{F}}_q},\ol{\bF}_{\ell})$ 
as an $\ol{\bF}_{\ell}[\oU_n(\bF_q)]$-module. 

In this section, we often ignore Tate twists 
when it is not necessary to consider Frobenius action. 
We recall that $\mathcal{O}$ and $\mathbb{F}$ is defined in \S \ref{Pre1}. 
For an $\mathcal{O}$-module $M$, 
let $M[\fm]$ denote the $\mathcal{O}$-submodule of $M$
consisting of elements  
annihilated by any element of $\fm$. 

\begin{lem}\label{free}
\begin{enumerate}
\item\label{en:fr}
The cohomology 
$H^i(S_{n,\overline{\mathbb{F}}_q},\mathcal{O})$
is free as an $\mathcal{O}$-module for any $i$.
\item\label{en:rk1}
The cohomology $H^i(S_{n,\overline{\mathbb{F}}_q},\mathcal{O})$ is zero if $i \neq n-2$ and 
$i$ is odd, and is free of rank one as an $\mathcal{O}$-module if $0 \leq i \leq 2(n-2)$ is even and $i \neq n-2$.  
\item 
We have an isomorphism 
$H^i(S_{n,\overline{\mathbb{F}}_q},\mathcal{O})\otimes_{\mathcal{O}} \mathbb{F}\simeq H^i(S_{n,\overline{\mathbb{F}}_q},\mathbb{F})$ for any $i$. 
\end{enumerate}
\end{lem}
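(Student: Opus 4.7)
My plan is to handle the degrees $i \neq n-2$ using weak Lefschetz, settle the middle degree by lifting to characteristic zero, and finally deduce part (3) from part (1) via a universal coefficient argument.

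For the first step, the complement $Y_n = \mathbb{P}^{n-1} \setminus S_n$ is smooth affine of dimension $n-1$, so Lemma \ref{affine}\ref{en:affv} gives $H^i_c(Y_{n,\overline{\mathbb{F}}_q}, \mathbb{F}) = 0$ for $i < n-1$. Passing to $\mathcal{O}$-coefficients via the sequence $0 \to \mathcal{O} \xrightarrow{\varpi} \mathcal{O} \to \mathbb{F} \to 0$ and Nakayama's lemma (applicable since $H^i_c(\,\cdot\,,\mathcal{O})$ is finitely generated over $\mathcal{O}$), we similarly obtain $H^i_c(Y_{n,\overline{\mathbb{F}}_q}, \mathcal{O}) = 0$ for $i < n-1$. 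The Gysin long exact sequence for $\mathbb{P}^{n-1} = Y_n \sqcup S_n$ then yields $H^i(\mathbb{P}^{n-1}_{\overline{\mathbb{F}}_q}, \mathcal{O}) \xrightarrow{\sim} H^i(S_{n,\overline{\mathbb{F}}_q}, \mathcal{O})$ for $i \leq n-3$, which settles parts (1) and (2) in this range.

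For the middle degree, I would lift: the Fermat hypersurface $\sum_{i=1}^{n} x_i^{q+1} = 0$ is smooth and projective over $W(\overline{\mathbb{F}}_q)$ (since $(q+1,p)=1$). Smooth proper base change and the Artin comparison theorem then identify
\[
 H^i(S_{n,\overline{\mathbb{F}}_q}, \mathcal{O}) \simeq H^i(S_{n,\mathbb{C}}^{\mathrm{an}}, \mathbb{Z}) \otimes_{\mathbb{Z}} \mathcal{O}.
\]
The right-hand side is torsion-free in every degree, because the integral Betti cohomology of a smooth hypersurface in $\mathbb{P}^{n-1}_{\mathbb{C}}$ is torsion-free (a classical consequence of the Lefschetz hyperplane theorem, integral Poincar\'{e} duality, and the Andreotti--Frankel homotopy-dimension bound on the smooth affine complement; see also \cite{DimSinghyp}). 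This proves (1). Combining (1) with Poincar\'{e} duality on the smooth projective variety $S_n$ then promotes the rank statement of Step 1 to all $i \neq n-2$, completing (2).

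Finally, part (3) is immediate from (1): the long exact sequence associated to $0 \to \mathcal{O} \xrightarrow{\varpi} \mathcal{O} \to \mathbb{F} \to 0$ splits into short exact pieces
\[
 0 \to H^i(S_{n,\overline{\mathbb{F}}_q}, \mathcal{O}) \otimes_{\mathcal{O}} \mathbb{F} \to H^i(S_{n,\overline{\mathbb{F}}_q}, \mathbb{F}) \to H^{i+1}(S_{n,\overline{\mathbb{F}}_q}, \mathcal{O})[\mathfrak{m}] \to 0,
\]
and the rightmost term vanishes since each $H^j(S_{n,\overline{\mathbb{F}}_q}, \mathcal{O})$ is free by (1). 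The main obstacle is the freeness of $H^{n-2}(S_{n,\overline{\mathbb{F}}_q}, \mathcal{O})$ in the middle degree, which weak Lefschetz cannot reach; the reduction via lifting is essentially formal, but one does have to invoke a nontrivial integral topological input about smooth complex hypersurfaces. Given that, everything else is routine.
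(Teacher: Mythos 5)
Your argument is essentially the paper's: both proofs reduce part (1) to the classical fact that a smooth complex hypersurface has torsion-free integral cohomology, via a lift to characteristic zero together with smooth proper base change and Artin comparison, and both deduce part (3) from the resulting freeness via the universal coefficients sequence. The only real variant is part (2): the paper reads the Betti numbers off the comparison isomorphism and cites Dimca directly, whereas you re-derive them in positive characteristic from affine vanishing on $Y_n$ plus Poincar\'e duality on $S_n$. That variant is fine, but note that the Poincar\'e duality step which transports the rank computation from $i\le n-3$ to $i\ge n-1$ already relies on the freeness you established in (1) to make the integral pairing perfect; logically you should therefore settle (1) before invoking duality in (2), not after, as the ordering of your write-up might suggest. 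With that reordering the proof is correct and matches the paper's strategy.
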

\begin{proof}
We denote by $S_{n,\mathbb{Q}}$ 
the Fermat variety defined by 
the same equation as $S_n$ in $\mathbb{P}_{\mathbb{Q}}^{n-1}$.  
Let $i$ be an integer. 
We have isomorphisms
\[
H^i(S_{n,\mathbb{C}}^{\rm an},\mathbb{Z}) \otimes_{\mathbb{Z}} \mathcal{O} \simeq 
H^i(S_{n,\mathbb{C}}^{\rm an},\mathcal{O}) \simeq
H^i(S_{n,\mathbb{C}},\mathcal{O}),  
\]
where the first isomorphism follows from that 
$\cO$ is flat over $\bZ$, and  
the second one follows from the comparison theorem 
between singular and \'{e}tale cohomology. 
By taking an isomorphism 
$\mathbb{C} \simeq \overline{\mathbb{Q}}_p$, 
we have an isomorphism $H^i(S_{n,\mathbb{C}},\mathcal{O}) \simeq H^i(S_{n,\overline{\mathbb{Q}}_p},\mathcal{O})$. Since $S_{n,\mathbb{Q}}$ has good reduction at $p$ and the reduction equals $S_n$, we have an isomorphism 
\[
H^i(S_{n,\overline{\mathbb{Q}}_p},\mathcal{O}) \simeq 
H^i(S_{n,\overline{\mathbb{F}}_q},\mathcal{O})
\]
by the proper base change theorem. 
As a result, we have 
\begin{equation}\label{comparison}
H^i(S_{n,\mathbb{C}}^{\rm an},\mathbb{Z}) \otimes_{\mathbb{Z}} \mathcal{O} \simeq 
H^i(S_{n,\overline{\mathbb{F}}_q},\mathcal{O}). 
\end{equation}
Hence the first claim follows, because $H^i(S_{n,\mathbb{C}}^{\rm an},\mathbb{Z})$ is a free $\mathbb{Z}$-module by \cite[Proposition (B32) (ii)]{DimSinghyp}. 
The second claim follows from \eqref{comparison} and \cite[Theorem (B22)]{DimSinghyp}. 
We have a short exact sequence \[
0 \to H^i(S_{n,\overline{\mathbb{F}}_q},\mathcal{O})\otimes_{\mathcal{O}} \mathbb{F} \to H^i(S_{n,\overline{\mathbb{F}}_q},\mathbb{F}) \to H^{i+1}(S_{n,\overline{\mathbb{F}}_q},\mathcal{O})[\fm] \to 0. 
\] Hence the third claim follows from the first one. 
\end{proof}
In the sequel, we always assume that $\cO$ is the ring of integers in 
a finite extension of $\mathbb{Q}_{\ell}(\mu_{p(q+1)})$. 
Every homomorphism is $\oU_n(\bF_q)$-equivariant. Let the notation be as in 
\S \ref{agd}. 
We have a long exact sequence 
\begin{gather}\label{ltri}
\begin{aligned}
\cdots & \to H_{\mathrm{c}}^i(Y_{n,\overline{\mathbb{F}}_q},\mathcal{O}) \to 
H^i(\mathbb{P}^{n-1},\mathcal{O})
\to 
H^i(S_{n,\overline{\mathbb{F}}_q},\mathcal{O})
\\
& \to H_{\mathrm{c}}^{i+1}(Y_{n,\overline{\mathbb{F}}_q},\mathcal{O}) \to 
H^{i+1}(\mathbb{P}^{n-1},\mathcal{O})
\to 
H^{i+1}(S_{n,\overline{\mathbb{F}}_q},\mathcal{O})
\to \cdots. 
\end{aligned}
\end{gather}
By Lemma \ref{affine} \ref{en:affv}, the restriction map 
\[
H^i(\mathbb{P}^{n-1},\mathcal{O})
\to 
H^i(S_{n,\overline{\mathbb{F}}_q},\mathcal{O})
\] 
is an isomorphism for $i<n-2$. By Lemma \ref{free} \ref{en:fr}
and Poincar\'{e} duality, 
we obtain an isomorphism
\[
f_i \colon 
H^i(S_{n,\overline{\mathbb{F}}_q},\mathcal{O}) \xrightarrow{\sim} H^{i+2}(\mathbb{P}^{n-1},\mathcal{O})
\] for $n-1 \leq i \leq 2n-4$. 
Let $\kappa=[\mathbb{P}^{n-2}] \in H^2(\mathbb{P}^{n-1},\mathcal{O})$ be the cycle class of 
the hyperplane $\mathbb{P}^{n-2} \subset \mathbb{P}^{n-1}$, and 
$\kappa^i \in H^{2i}(\mathbb{P}^{n-1},\mathcal{O})$ the cup product of $\kappa$. For $1 \leq i \leq n-1$, 
the map $\kappa^i \colon \mathcal{O} \to H^{2i}(\mathbb{P}^{n-1},\mathcal{O});\ 1 \mapsto \kappa^i$ is an isomorphism. 
Let $[S_{n,\overline{\mathbb{F}}_q} ] \in H^2(\mathbb{P}^{n-1},\mathcal{O})$
be the cycle class of $S_{n,\overline{\mathbb{F}}_q} \subset \mathbb{P}^{n-1}$. 
For $n-1 \leq 2i \leq 2n-4$, the composite 
\begin{equation*}\label{tol}
\xymatrix{
H^{2i}(\mathbb{P}^{n-1},\mathcal{O})
\ar[r]^-{\rm rest.} &
H^{2i}(S_{n,\overline{\mathbb{F}}_q},\mathcal{O})
\ar[r]^-{f_{2i}}_-{\simeq} &  
H^{2i+2}(\mathbb{P}^{n-1},\mathcal{O})
}
\end{equation*}
equals the map induced by the cup product 
by $[S_{n,\overline{\mathbb{F}}_q} ]$. 
Clearly, we have 
$[S_{n,\overline{\mathbb{F}}_q} ]=(q+1) \kappa$
in $H^2(\mathbb{P}^{n-1},\mathcal{O})$. 
We write $q+1=\ell^a r$ with $(\ell,r)=1$. 
By Lemma \ref{free} \ref{en:rk1}, we have 
\begin{equation}\label{T}
H_{\mathrm{c}}^i(Y_{n,\overline{\mathbb{F}}_q},\mathcal{O})
\simeq \begin{cases}
0 & \textrm{if $i$ is even}, \\
\mathcal{O}/\ell^a & \textrm{if $i$ is odd}
\end{cases}
\end{equation}
for $n \leq i < 2n-2$. 


For a character $\chi \colon \mu_{q+1} \to 
\mathcal{O}^{\times}$, we write $\overline{\chi}$ for the composite 
of $\chi$ and the reduction map 
$\mathcal{O}^{\times} \to \mathbb{F}^{\times}$. 

We have a short exact sequence 
\begin{equation}\label{T2}
0 \to H_{\mathrm{c}}^{n-1}(Y_{n,\overline{\mathbb{F}}_q},\mathscr{K}_{\chi})\otimes_{\mathcal{O}} \mathbb{F}
\to 
H_{\mathrm{c}}^{n-1}(Y_{n,\overline{\mathbb{F}}_q},\mathscr{K}_{\overline{\chi}}) \to 
H_{\mathrm{c}}^{n}(Y_{n,\overline{\mathbb{F}}_q}, \mathscr{K}_{\chi})[\fm] \to 0.  
\end{equation}
By \eqref{T}, we have 
\begin{equation}\label{T2-5}
\dim_{\mathbb{F}} H_{\mathrm{c}}^{n}(Y_{n,\overline{\mathbb{F}}_q},\mathcal{O})[\fm]=
\begin{cases}
0 & \textrm{if $a=0$}, \\
\frac{1-(-1)^n}{2} & \textrm{if $a \geq 1$}.
\end{cases}  
\end{equation}
By Proposition \ref{prop:Vn}, 
Lemma \ref{affine} \ref{en:frmid},  
\eqref{T2} with $\chi=1$ and \eqref{T2-5}, 
we have 
\begin{equation}\label{ddm}
\dim_{\mathbb{F}}
H_{\mathrm{c}}^{n-1}(Y_{n,\overline{\mathbb{F}}_q},\mathbb{F})=
\begin{cases}
\frac{q^n+(-1)^nq}{q+1} & \textrm{if $a=0$}, \\
\frac{q^n-(-1)^n}{q+1}+\frac{1+(-1)^n}{2} & \textrm{if $a\geq 1$}. 
\end{cases}
\end{equation} 
Hence, we have 
\begin{equation}\label{T3}
\dim_{\mathbb{F}} H_{\mathrm{c}}^{n}(Y_{n,\overline{\mathbb{F}}_q},\mathscr{K}_{\chi})[\fm]=\frac{1+(-1)^n}{2} \quad \textrm{if $\chi \neq 1$ and $\overline{\chi}=1$}
\end{equation}
again by Proposition \ref{prop:Vn}, 
Lemma \ref{affine} \ref{en:frmid} and   
\eqref{T2} with $\chi$. 
Note that a character $\chi \neq 1$ such that $\overline{\chi}=1$ does not exist if $a=0$. 

In the following, we investigate \eqref{T2} when $\overline{\chi} \neq 1$. 
We set $Y_{n,r}=\widetilde{Y}_{n,\overline{\mathbb{F}}_q}/\mu_{\ell^a}$. 
We have a decomposition 
$\mu_{q+1} = \mu_{\ell^a} \times \mu_r$. 
We write as $\chi=\chi_{\ell^a} \chi_r$
with $\chi_{\ell^a} \in \Hom (\mu_{\ell^a},\mathcal{O}^{\times})$
and $\chi_r \in \Hom (\mu_r,\mathcal{O}^{\times})$.  
We have 
\begin{equation}\label{T4}
0 \to H_{\mathrm{c}}^{n-1}(Y_{n,r},\mathscr{K}_{\chi_{\ell^a}})\otimes_{\mathcal{O}} \mathbb{F}
\to 
H_{\mathrm{c}}^{n-1}(Y_{n,r},\mathbb{F}) \to 
H_{\mathrm{c}}^{n}(Y_{n,r},{\mathscr{K}_{\chi_{\ell^a}}})[\fm] \to 0.  
\end{equation}
The natural morphism $Y_{n,r} \to Y_{n,\overline{\mathbb{F}}_q}$ is a $\mu_r$-torsor. 
By $(\ell,r)=1$, we have isomorphisms 
\begin{gather}\label{T4-5}
\begin{aligned}
H_{\mathrm{c}}^i(Y_{n,r},\mathscr{K}_{\chi_{\ell^a}}) & \simeq 
\bigoplus_{\chi_r \in \Hom (\mu_r,\mathcal{O}^{\times})} 
H_{\mathrm{c}}^i(Y_{n,\overline{\mathbb{F}}_q},\mathscr{K}_{\chi_{\ell^a} \chi_r}), \\
H_{\mathrm{c}}^i(Y_{n,r},\mathbb{F}) & \simeq \bigoplus_{\ol{\chi}_r \in \Hom (\mu_r,\mathbb{F}^{\times})}
H_{\mathrm{c}}^i(Y_{n,\overline{\mathbb{F}}_q},\mathscr{K}_{\ol{\chi}_r})
\end{aligned}
\end{gather}
for any integer $i$. 
By Proposition \ref{prop:Vn}, Lemma \ref{affine} \ref{en:frmid} 
and the first isomorphism in \eqref{T4-5}, we have 
\begin{equation}\label{T5}
\mathrm{rank}_{\mathcal{O}}\ H_{\mathrm{c}}^{n-1}(Y_{n,r},\mathscr{K}_{\chi_{\ell^a}})=
\begin{cases}
 \frac{q^n-(-1)^n}{q+1} r & \textrm{if $\chi_{\ell^a} \neq 1$}, \\
\frac{q^n-(-1)^n}{q+1}r +(-1)^n & 
\textrm{if $\chi_{\ell^a}=1$}. 
\end{cases}
\end{equation}
We show the following lemma by using a comparison theorem between singular and \'{e}tale cohomology and applying results on weighted hypersurfaces in \cite{DimSinghyp}. 
\begin{lem}\label{surj}
The pull-back $H_{\mathrm{c}}^i(Y_{n,\overline{\mathbb{F}}_q},\mathcal{O}) \to 
H_{\mathrm{c}}^i(Y_{n,r},\mathcal{O})$ is an isomorphism
for $n \leq i <2n-2$. 
\end{lem}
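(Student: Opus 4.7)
\emph{Plan.} The plan is to reformulate the lemma as the vanishing of twisted cohomology for non-trivial characters, compactify $Y_{n,r}$ as a quasi-smooth weighted hypersurface, lift to characteristic zero, and apply Dimca's results on weighted Fermat-type hypersurfaces via the comparison theorem between \'{e}tale and singular cohomology.

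First, since $\gcd(r,\ell)=1$, the $\mu_r$-torsor $Y_{n,r} \to Y_{n,\overline{\mathbb{F}}_q}$ yields a decomposition
\[
 H_{\mathrm{c}}^i(Y_{n,r},\mathcal{O}) \simeq \bigoplus_{\chi_r \in \Hom(\mu_r,\mathcal{O}^\times)} H_{\mathrm{c}}^i(Y_{n,\overline{\mathbb{F}}_q},\mathscr{K}_{\chi_r})
\]
analogous to the first isomorphism in \eqref{T4-5}, and the pull-back corresponds to the inclusion of the $\chi_r=1$ summand. Thus it suffices to show $H_{\mathrm{c}}^i(Y_{n,\overline{\mathbb{F}}_q},\mathscr{K}_{\chi_r}) = 0$ for every non-trivial $\chi_r \in \Hom(\mu_r,\mathcal{O}^\times)$ and $n \leq i < 2n-2$.

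Next, I would compactify $Y_{n,r}$ as a quasi-smooth weighted hypersurface. In the weighted projective space $\mathbb{P}(1,\ldots,1,\ell^a)$ with coordinates $x_1,\ldots,x_n,t$ of weights $1,\ldots,1,\ell^a$, let $\bar{Y}_{n,r}$ be the hypersurface $t^r = \sum_{i=1}^n x_i^{q+1}$ of degree $q+1 = r\ell^a$. Since $\gcd(r(q+1),p)=1$, the Jacobian of the defining equation vanishes only at the excluded origin, so $\bar{Y}_{n,r}$ is quasi-smooth. The affine chart $\{t \neq 0\}$ naturally identifies with $\widetilde{Y}_n/\mu_{\ell^a} = Y_{n,r}$, while $\bar{Y}_{n,r} \cap \{t=0\}$ is $S_n \subset \mathbb{P}^{n-1}$. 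The natural map $\bar{Y}_{n,r} \to \mathbb{P}^{n-1}$, $(x_i,t) \mapsto (x_i)$, extends the covering $Y_{n,r} \to Y_n$ and is the identity on the boundary $S_n$; the whole picture lifts to characteristic zero.

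By proper base change applied to the lifted pair $(\bar{Y}_{n,r}, S_n)$ and the comparison theorem between \'{e}tale and singular cohomology, the desired vanishing reduces to its analogue for the complex analytic counterparts. Then, invoking Dimca's results on quasi-smooth weighted Fermat-type hypersurfaces in \cite{DimSinghyp} (analogous to Lemma \ref{free}), in the range $n \leq i < 2n-2$ the natural map $H^i(\mathbb{P}^{n-1},\mathcal{O}) \to H^i(\bar{Y}_{n,r},\mathcal{O})$ is an isomorphism, compatible with the restriction maps to $H^i(S_n,\mathcal{O})$. The five-lemma applied to the two long exact sequences for the pairs $(\mathbb{P}^{n-1},S_n)$ and $(\bar{Y}_{n,r},S_n)$ then yields the claimed isomorphism for compactly supported cohomology.

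The main obstacle is the integral (over $\mathcal{O}$) cohomology comparison, as opposed to the easier rational one. One must extract from Dimca's transcendental cohomology computations the freeness and degree concentration for $\bar{Y}_{n,r}$ integrally, together with the torsion-free structure of $\mathbb{P}(1,\ldots,1,\ell^a)$, to identify $H^i(\bar{Y}_{n,r},\mathcal{O})$ with $H^i(\mathbb{P}^{n-1},\mathcal{O})$ as $\mathcal{O}$-modules in the non-middle, non-top degree range, and to verify compatibility of the restrictions to $H^i(S_n,\mathcal{O})$.
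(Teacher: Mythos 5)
Your overall strategy is the same as the paper's: realize $Y_{n,r}$ as the open complement of $S_n$ in a quasi-smooth weighted Fermat-type hypersurface in $\mathbb{P}(1,\ldots,1,\ell^a)$, lift to characteristic zero, invoke the comparison theorem and Dimca's torsion-freeness/concentration results for quasi-smooth weighted hypersurfaces, and compare the long exact sequences for the pairs $(\mathbb{P}^{n-1},S_n)$ and $(\bar{Y}_{n,r},S_n)$. (Your $\bar{Y}_{n,r}$ is, up to a sign, the paper's $S_{n+1,\overline{\mathbb{F}}_q}/\mu_{\ell^a}$.) Where you differ from the paper is in the endgame.

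The paper never needs to control the pull-back map $H^i(\mathbb{P}^{n-1},\mathcal{O}) \to H^i(\bar{Y}_{n,r},\mathcal{O})$. For odd $i$ in the range it extracts surjectivity of $H_{\mathrm{c}}^i(Y_{n,\overline{\mathbb{F}}_q},\mathcal{O}) \to H_{\mathrm{c}}^i(Y_{n,r},\mathcal{O})$ from a diagram chase in which the \emph{only} shared vertical isomorphism is the identity on $H^{i-1}(S_{n,\overline{\mathbb{F}}_q},\mathcal{O})$, plus the vanishing of $H^{i}(\mathbb{P}^{n-1},\mathcal{O})$ and $H^{i}(\bar{Y}_{n,r},\mathcal{O})$; injectivity comes from a trace-map argument using $(\ell,r)=1$. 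For even $i$ it shows $H_{\mathrm{c}}^i(Y_{n,r},\mathcal{O})=0$ directly by a torsion/freeness argument (using the $\mathcal{O}$-freeness of $H^i(\bar{Y}_{n,r},\mathcal{O})$ and $H^i(S_{n,\overline{\mathbb{F}}_q},\mathcal{O})$) and compares with \eqref{T}. In contrast, your four/five-lemma ladder requires the additional input that the pull-back $H^i(\mathbb{P}^{n-1},\mathcal{O}) \to H^i(\bar{Y}_{n,r},\mathcal{O})$ is an isomorphism for $n \le i < 2n-2$, compatibly with the restrictions to $S_n$. This is not what \cite[(B33), Prop.\ (B32)]{DimSinghyp} gives: Dimca tells you the abstract module structure of $H^i(\bar{Y}_{n,r},\mathcal{O})$ (free of rank one outside middle degree), but not that the specific map induced by the projection $\bar{Y}_{n,r} \to \mathbb{P}^{n-1}$ hits a generator integrally (a priori it could be multiplication by a non-unit of $\mathcal{O}$). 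You flag this yourself as ``the main obstacle,'' and it is indeed the gap: as written, the key isomorphism your five-lemma rests on is asserted, not proved. Restructuring along the lines of the paper --- using the identity on $H^*(S_n)$ as the only needed vertical isomorphism, plus the parity and freeness facts --- eliminates the need for this claim entirely.

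One smaller remark: your opening reduction to the vanishing of $H_{\mathrm{c}}^i(Y_{n,\overline{\mathbb{F}}_q},\mathscr{K}_{\chi_r})$ for nontrivial $\chi_r$ is correct and gives injectivity of the pull-back for free (matching the paper's trace argument), but you do not actually use this reformulation in the rest of the argument.
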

\begin{proof}
Let $f \colon Y_{n,r} \to Y_{n,\overline{\mathbb{F}}_q}$
be the natural finite morphism. 
We have the trace map
$f_{\ast} \colon H_{\mathrm{c}}^i(Y_{n,r},\mathcal{O}) \to 
H_{\mathrm{c}}^i(Y_{n,\overline{\mathbb{F}}_q},\mathcal{O})$.  
The pull-back map 
$f^\ast \colon H_{\mathrm{c}}^i(Y_{n,\overline{\mathbb{F}}_q},\mathcal{O}) 
 \to H_{\mathrm{c}}^i(Y_{n,r},\mathcal{O})$ is injective by 
$(\ell,r)=1$, because 
the composite $f_{\ast} \circ f^\ast$ is the 
$r$-multiplication map. 
Therefore it suffices to show that the map is surjective. 

We regard $S_n$ as a closed 
subscheme of $S_{n+1}$ defined by $x_{n+1}=0$. 
We take $\xi \in \mathbb{F}_{q^2}$ such that 
$\xi^{q+1}=-1$. 
Then $\widetilde{Y}_{n}$ is isomorphic to the complement 
$S_{n+1} \setminus S_{n}$ over $\bF_{q^2}$ by 
\[
 \widetilde{Y}_{n,\bF_{q^2}} \xrightarrow{\sim} 
 (S_{n+1} \setminus S_{n})_{\bF_{q^2}};\ 
 (x_i)_{1 \leq i \leq n} \mapsto [x_1:\cdots:x_n:\xi]. 
\]
Let $\mu_{q+1}$ act on $S_{n+1,\bF_{q^2}}$ by 
$[x_1:\cdots:x_{n+1}] \mapsto [x_1:\cdots :x_n:\zeta^{-1}x_{n+1}]$ for $\zeta \in \mu_{q+1}$. 
We have the well-defined morphism 
$\pi \colon S_{n+1} \to \mathbb{P}_{\mathbb{F}_{q^2}}^{n-1};\ 
[x_1:\cdots:x_{n+1}] \mapsto [x_1:\cdots:x_n]$. 
We have a commutative diagram
\[
\xymatrix{
\widetilde{Y}_{n,\bF_{q^2}} \ar[r]\ar[d] &  S_{n+1,\bF_{q^2}} \ar[d]^{\pi} & S_{n,\bF_{q^2}} \ar[l]\ar[d]^{\simeq} \\
Y_{n,\bF_{q^2}} \ar[r] & \mathbb{P}_{\mathbb{F}_{q^2}}^{n-1} & S_{n,\bF_{q^2}}.  \ar[l] 
}
\]
By considering the base change of this to 
$\overline{\mathbb{F}}_q$ and 
taking the quotients on the upper line by $\mu_{\ell^a}$, 
we obtain a commutative diagram 
\begin{equation}\label{comm}
\xymatrix{
Y_{n,r} \ar[r]\ar[d]^f & S_{n+1,{\overline{\mathbb{F}}_q}}/\mu_{\ell^a}\ar[d] & S_{n,{\overline{\mathbb{F}}_q}}
\ar[l]\ar[d]^{\simeq} \\
Y_{n,{\overline{\mathbb{F}}}_q} \ar[r] & \mathbb{P}^{n-1} & S_{n,{\overline{\mathbb{F}}_q}}. \ar[l]
}
\end{equation}
Let $\mathbf{w}=(1,\ldots,1,\ell^a) \in \mathbb{Z}_{\geq 1}^{n+1}$ and $\mathbb{P}(\mathbf{w})$
be the weighted projective space associated to 
$\mathbf{w}$ over $\overline{\mathbb{F}}_q$. 
Then  the quotient $S_{n+1,\overline{\mathbb{F}}_q}/\mu_{\ell^a}$ 
is isomorphic to the weighted hypersurface defined by 
\begin{equation}\label{sa}
\sum_{i=1}^n X_i^{q+1}+X_{n+1}^r=0
\end{equation}
in $\mathbb{P}(\mathbf{w}) \simeq \mathbb{P}^n/\mu_{\ell^a}$. 
Let $U_i \subset S_{n+1,\overline{\mathbb{F}}_q}$ be the open subscheme defined by $x_i \neq 0$ for $1 \leq i \leq n$. Then we have $S_{n+1,\overline{\mathbb{F}}_q}=\bigcup_{i=1}^n U_i$. 
For each $1 \leq i \leq n$, the quotient $U_i/\mu_{\ell^a}$ is defined by 
\[
 1+s_1^{q+1}+\cdots+s_{i-1}^{q+1}+ s_{i+1}^{q+1}+\cdots+s_n^{q+1}+t_i^r=0
\] in $\mathbb{A}^n$. This is smooth over $\overline{\mathbb{F}}_q$
by $p \nmid q+1$ and the Jacobian criterion. 
Hence $S_{n+1,\overline{\mathbb{F}}_q}/\mu_{\ell^a}$
is smooth over $\overline{\mathbb{F}}_q$. 

We consider the smooth hypersurface defined by the same equation as \eqref{sa} in the weighted projective space $\mathbb{P}(\mathbf{w})$ over 
$\mathbb{Q}$, which we denote by $S'$. Then $S'^{\rm an}_{\mathbb{C}}$ is strongly smooth as in \cite[Example (B31)]{DimSinghyp}. 
Hence the integral cohomology algebra of it is torsion-free by 
\cite[Proposition (B32) (ii)]{DimSinghyp}. 
Clearly, $S'$ has good reduction at $p$, and the reduction is isomorphic to $S_{n+1,\overline{\mathbb{F}}_q}/\mu_{\ell^a}$ over $\overline{\mathbb{F}}_q$. 
In the same manner as \eqref{comparison}, we have an isomorphism 
\[
H^i(S'^{\rm an}_{\mathbb{C}},\mathbb{Z}) \otimes_{\mathbb{Z}} \mathcal{O} \simeq H^i(S_{n+1,{\overline{\mathbb{F}}_q}}/\mu_{\ell^a},\mathcal{O}).
\] 
Hence, $H^i(S_{n+1,{\overline{\mathbb{F}}_q}}/\mu_{\ell^a},\mathcal{O})$ is a free $\mathcal{O}$-module of rank one for any even integer $i \neq n-1$ and 
$H^i(S_{n+1,{\overline{\mathbb{F}}_q}}/\mu_{\ell^a},\mathcal{O})=0$ for any odd 
integer $i \neq n-1$
by \cite[(B33)]{DimSinghyp}. 
By \eqref{comm}, we have a commutative diagram
\[
\xymatrix
{
H^{2i}(\mathbb{P}^{n-1},\mathcal{O}) \ar[r]\ar[d]
& H^{2i}(S_{n,\overline{\mathbb{F}}_q},\mathcal{O}) \ar[r]\ar[d]^{\simeq} & H_{\mathrm{c}}^{2i+1}(Y_{n,\overline{\mathbb{F}}_q},\mathcal{O}) \ar[r]\ar[d] & 0 \\
H^{2i}(S_{n+1,{\overline{\mathbb{F}}_q}}/\mu_{\ell^a},\mathcal{O}) \ar[r]
& H^{2i}(S_{n,\overline{\mathbb{F}}_q},\mathcal{O}) \ar[r] & H_{\mathrm{c}}^{2i+1}(Y_{n,r},\mathcal{O}) \ar[r] & 0}
\]
for $n-1 \leq 2i <2n-3$, where the horizontal 
lines are exact. Hence the right vertical map is surjective. 
Hence the claim for any odd integer $i$
follows.  

By \eqref{T}, it suffices to show 
$H_{\mathrm{c}}^{2i}(Y_{n,r},\mathcal{O})=0$
for $n \leq 2i <2n-2$. 
We have an exact sequence 
\[
0 \to H_{\mathrm{c}}^{2i}(Y_{n,r},\mathcal{O}) \xrightarrow{g_i} H^{2i}(S_{n+1,{\overline{\mathbb{F}}_q}}/\mu_{\ell^a},\mathcal{O}) \to H^{2i}(S_{n,\overline{\mathbb{F}}_q},\mathcal{O}) 
\]
for $n \leq 2i<2n-2$ by Lemma \ref{free} \ref{en:rk1}. If $H_{\mathrm{c}}^{2i}(Y_{n,r},\mathcal{O}) \neq 0$, the cokernel of $g_i$ is torsion, 
because $H^{2i}(S_{n+1,{\overline{\mathbb{F}}_q}}/\mu_{\ell^a},\mathcal{O})$ is a free $\mathcal{O}$-module of rank one.  Since $H^{2i}(S_{n,\overline{\mathbb{F}}_q},\mathcal{O})$ is a free
$\mathcal{O}$-module by Lemma \ref{free} \ref{en:fr}, we obtain $H_{\mathrm{c}}^{2i}(Y_{n,r},\mathcal{O}) =0$. 
\end{proof}
We show a fundamental proposition through the paper. 
\begin{prop}\label{lp}
\begin{enumerate}
\item\label{en:KFnd} 
Assume $\ell \nmid q+1$. 
Let $\chi \in \Hom (\mu_{q+1},\mathcal{O}^{\times})$. 
We have an isomorphism 
\[
H_{\mathrm{c}}^{n-1}(Y_{n,\overline{\mathbb{F}}_q},\mathscr{K}_{\chi})\otimes_{\mathcal{O}} \mathbb{F}
\xrightarrow{\sim}
H_{\mathrm{c}}^{n-1}(Y_{n,\overline{\mathbb{F}}_q},\mathscr{K}_{\overline{\chi}})
\]
as $\mathbb{F}[\oU_n(\bF_q)]$-modules. 
\item\label{en:KFd}
Assume $\ell \mid q+1$. 
We have a short exact sequence 
\[
0  \to H_{\mathrm{c}}^{n-1}(Y_{n,\overline{\mathbb{F}}_q},\mathscr{K}_{\chi_{\ell^a} \chi_r})\otimes_{\mathcal{O}} \mathbb{F}
\to 
H_{\mathrm{c}}^{n-1}(Y_{n,\overline{\mathbb{F}}_q},\mathscr{K}_{\overline{\chi}_r}) \to 
H_{\mathrm{c}}^{n}(Y_{n,\overline{\mathbb{F}}_q},{\mathscr{K}_{\chi_{\ell^a} \chi_r}})[\fm] \to 0 
\]
as $\mathbb{F}[\oU_n(\bF_q)]$-modules. 
Furthermore, we have 
\[
\dim_{\mathbb{F}} H_{\mathrm{c}}^{n}(Y_{n,\overline{\mathbb{F}}_q},{\mathscr{K}_{\chi_{\ell^a}\chi_r}})[\fm]
=\begin{cases}
0 & \textrm{if $\chi_r \neq 1$}, \\
\frac{1+(-1)^n}{2} & \textrm{if $\chi_r=1$ and $\chi_{\ell^a} \neq 1$}, \\
\frac{1-(-1)^n}{2} & \textrm{if $\chi_r=1$ and $\chi_{\ell^a}=1$}. 
\end{cases}
\]
\end{enumerate}
\end{prop}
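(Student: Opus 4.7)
The plan is to treat both parts through the Bockstein short exact sequence \eqref{T2}, whose third term $H_{\mathrm{c}}^{n}(Y_{n,\overline{\mathbb{F}}_q},\mathscr{K}_{\chi})[\fm]$ controls the whole statement: in part \ref{en:KFnd} it must vanish (yielding the isomorphism), while in part \ref{en:KFd} its exact dimension is what is asserted. So everything reduces to computing this torsion dimension in all cases.

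For part \ref{en:KFnd}, since $\ell \nmid q+1$ we have $a=0$ and $Y_{n,r}=\widetilde{Y}_n$. Because $\ell \nmid \lvert \mu_{q+1} \rvert$, every character of $\mu_{q+1}$ takes values in $\mathcal{O}^{\times}$ and the $\mu_{q+1}$-action yields an $\mathcal{O}$-module decomposition
\[
 H_{\mathrm{c}}^{n}(\widetilde{Y}_{n,\overline{\mathbb{F}}_q},\mathcal{O}) \simeq \bigoplus_{\chi \in \mu_{q+1}^{\vee}} H_{\mathrm{c}}^{n}(Y_{n,\overline{\mathbb{F}}_q},\mathscr{K}_{\chi}).
\]
Lemma \ref{surj} identifies the left-hand side with $H_{\mathrm{c}}^{n}(Y_{n,\overline{\mathbb{F}}_q},\mathcal{O})$, which vanishes by \eqref{T} since $\mathcal{O}/\ell^{a}=0$. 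Each summand is therefore zero, so $H_{\mathrm{c}}^{n}(Y_{n,\overline{\mathbb{F}}_q},\mathscr{K}_{\chi})[\fm]=0$ and \eqref{T2} degenerates into the desired isomorphism.

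For part \ref{en:KFd}, the short exact sequence is \eqref{T2} specialized to $\chi=\chi_{\ell^a}\chi_r$, using that $\chi_{\ell^a}$ reduces to $1$ modulo $\fm$ (as $\mu_{\ell^a}$ is an $\ell$-group). The cases $(\chi_r,\chi_{\ell^a})=(1,1)$ and $(1,\neq 1)$ of the dimension formula are direct restatements of \eqref{T2-5} and \eqref{T3}. For the remaining case $\chi_r\neq 1$ I plan to show the dimension is zero by comparing two computations of $\dim_{\mathbb{F}} H_{\mathrm{c}}^{n-1}(Y_{n,r},\mathbb{F})$ via \eqref{T4} and descending through the $\mu_r$-isotypic decomposition \eqref{T4-5}. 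With $\chi_{\ell^a}=1$, Lemma \ref{surj} together with \eqref{T} gives $\dim H_{\mathrm{c}}^{n}(Y_{n,r},\mathcal{O})[\fm]=(1-(-1)^n)/2$, so \eqref{T4} and \eqref{T5} yield
\[
 \dim_{\mathbb{F}} H_{\mathrm{c}}^{n-1}(Y_{n,r},\mathbb{F})=\frac{q^n-(-1)^n}{q+1}r+(-1)^n+\frac{1-(-1)^n}{2}.
\]
Applying \eqref{T4} instead with any $\chi_{\ell^a}\neq 1$ together with \eqref{T5} expresses the same total as $\frac{q^n-(-1)^n}{q+1}r+\dim H_{\mathrm{c}}^{n}(Y_{n,r},\mathscr{K}_{\chi_{\ell^a}})[\fm]$. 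Equating and simplifying gives $\dim H_{\mathrm{c}}^{n}(Y_{n,r},\mathscr{K}_{\chi_{\ell^a}})[\fm]=(1+(-1)^n)/2$. By \eqref{T4-5} this decomposes as $\sum_{\chi_r} \dim H_{\mathrm{c}}^{n}(Y_{n,\overline{\mathbb{F}}_q},\mathscr{K}_{\chi_{\ell^a}\chi_r})[\fm]$; the $\chi_r=1$ summand already saturates the total (by \eqref{T3} when $\chi_{\ell^a}\neq 1$, by \eqref{T2-5} when $\chi_{\ell^a}=1$), forcing every $\chi_r\neq 1$ summand to vanish.

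The main obstacle is Lemma \ref{surj}, but that is established upstream through the comparison theorem and Dimca's results on weighted Fermat hypersurfaces. Given it, what remains is a careful coordination of the Bockstein sequences on $Y_n$ and $Y_{n,r}$ with the $\mu_{\ell^a}\times\mu_r$ isotypic decomposition, so that a single dimension identity produces all the vanishing statements at once.
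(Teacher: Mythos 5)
Your proposal is correct and follows the same route as the paper: establish the vanishing/dimension of $H_{\mathrm{c}}^{n}(Y_{n,r},\mathscr{K}_{\chi_{\ell^a}})[\fm]$ by combining Lemma \ref{surj}, \eqref{T}, \eqref{T4}, \eqref{T5}, then push down through the $\mu_r$-isotypic decomposition \eqref{T4-5} to deduce that the $\chi_r\neq 1$ summands vanish, and read off all claims from the Bockstein sequence \eqref{T2}. The only cosmetic difference is that for part \ref{en:KFnd} you observe the vanishing of $H_{\mathrm{c}}^{n}(\widetilde{Y}_{n,\overline{\mathbb{F}}_q},\mathcal{O})$ directly rather than routing through the intermediate isomorphism \eqref{lpp}, but this rests on exactly the same inputs.
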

\begin{proof}
By \eqref{T} and Lemma \ref{surj}, we have 
\begin{equation}\label{T6}
H_{\mathrm{c}}^i(Y_{n,r},\mathcal{O}) \xleftarrow{\sim} 
H_{\mathrm{c}}^i(Y_{n,\overline{\mathbb{F}}_q},\mathcal{O}) \simeq 
\begin{cases}
0 & \textrm{if $i$ is even}, \\
\mathcal{O}/\ell^a  & \textrm{if 
$i$ is odd}
\end{cases}
\end{equation}
for $n \leq i < 2n-2$. 
Hence, by \eqref{T4} with $\chi_{\ell^a}=1$ and \eqref{T5}, we have 
\[
\dim_\mathbb{F} H_{\mathrm{c}}^{n-1}(Y_{n,r},\mathbb{F})=
\begin{cases}
\frac{q^n-(-1)^n}{q+1} r+(-1)^n & \textrm{if $a=0$}, \\
\frac{q^n-(-1)^n}{q+1} r+\frac{1+(-1)^n}{2} & \textrm{if $a \geq 1$}. 
\end{cases}
\]
Hence we have 
\[
\dim_{\mathbb{F}} H_{\mathrm{c}}^{n}(Y_{n,r},{\mathscr{K}_{\chi_{\ell^a}}})[\fm]
=\begin{cases}
0 & \textrm{if $a=0$}, \\
\frac{1+(-1)^n}{2} & \textrm{if $a \geq 1$ and $\chi_{\ell^a} \neq 1$}, \\
\frac{1-(-1)^n}{2} & \textrm{if $a \geq 1$ and $\chi_{\ell^a}=1$} 
\end{cases}
\]
by \eqref{T4} and \eqref{T5}. 
According to \eqref{T2-5} and \eqref{T3}, we have the same 
formula for $\dim_{\mathbb{F}} H_{\mathrm{c}}^{n}(Y_{n,\overline{\mathbb{F}}_q},{\mathscr{K}_{\chi_{\ell^a}}})[\fm]$. 
Hence we have 
\begin{equation}\label{lpp0}
H_{\mathrm{c}}^{n}(Y_{n,\overline{\mathbb{F}}_q},\mathscr{K}_{\chi_{\ell^a} \chi_r})[\fm]
=0 \quad \textrm{if $\chi_r \neq 1$}
\end{equation}
by \eqref{T4-5}. 
Hence the latter claim in the claim 
\ref{en:KFd} is proved. The former one is \eqref{T2}.  
By \eqref{T2} and \eqref{lpp0}, we have an isomorphism
\begin{equation}\label{lpp}
H_{\mathrm{c}}^{n-1}(Y_{n,\overline{\mathbb{F}}_q},\mathscr{K}_{\chi_{\ell^a} \chi_r})\otimes_{\mathcal{O}} \mathbb{F}
\simeq
H_{\mathrm{c}}^{n-1}(Y_{n,\overline{\mathbb{F}}_q},\mathscr{K}_{\overline{\chi}_r}) \quad \textrm{if $\chi_r \neq 1$}.  
\end{equation}

The claim \ref{en:KFnd} for $\chi=1$ follows from \eqref{T2} with 
$\chi=1$ and \eqref{T2-5}, and the one for $\chi \neq 1$ follows from \eqref{lpp}. 
\end{proof}
\begin{lem}\label{ccc0}
Assume $\ell \mid q+1$. 
\begin{enumerate}
\item 
Assume that $n$ is odd. 
Then $H_{\mathrm{c}}^n(Y_{n,\overline{\mathbb{F}}_q},\mathcal{O})[\fm]$
is a trivial representation of $\oU_n(\bF_q)$. 
\item\label{en:evtri}
Assume that $n \geq 4$ is even and $\ell \neq 2$. 
Let $\chi_{\ell^a} \in \Hom (\mu_{q+1},\mathcal{O}^{\times})$ be a non-trivial character of $\ell$-power order. Then $H_{\mathrm{c}}^n(Y_{n,\overline{\mathbb{F}}_q},\mathscr{K}_{\chi_{\ell^a}})[\fm]$ 
is a trivial representation of $\oU_n(\bF_q)$. 
\end{enumerate}
\end{lem}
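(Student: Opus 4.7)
The plan is to apply Proposition \ref{lp} \ref{en:KFd} with $\chi_r=1$, which in both cases yields a short exact sequence
\[
0 \to \ol{V_n[\chi_{\ell^a}]} \to H^{n-1}_{\mathrm{c}}(Y_{n,\ol{\bF}_q},\mathbb{F}) \to C \to 0
\]
of $\mathbb{F}[U_n(\bF_q)]$-modules, in which the cokernel $C$ is the one-dimensional module we wish to identify (the dimension being forced by that proposition). Since $U_n(\bF_q)^{\mathrm{ab}} \simeq \mu_{q+1}$ via $\det$, necessarily $C \simeq \chi' \circ \det$ for some character $\chi' \colon \mu_{q+1} \to \mathbb{F}^\times$, so the problem reduces to showing $\chi'$ is trivial.

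For case (1), I would argue geometrically via the long exact sequence \eqref{ltri}. For $n$ odd it gives
\[
H^{n-1}(\mathbb{P}^{n-1},\mathcal{O}) \xrightarrow{\mathrm{rest.}} H^{n-1}(S_{n,\ol{\bF}_q},\mathcal{O}) \to H^n_{\mathrm{c}}(Y_{n,\ol{\bF}_q},\mathcal{O}) \to 0,
\]
with both source and target of the restriction free of rank one over $\mathcal{O}$ by Lemma \ref{free} \ref{en:rk1}, generated respectively by $\kappa^{(n-1)/2}$ and its restriction to $S_{n,\ol{\bF}_q}$. Both generators are $U_n(\bF_q)$-invariant because the cycle class $\kappa$ is canonical, and the restriction is nonzero (its image has index $\ell^a$, as $[S_{n,\ol{\bF}_q}]=(q+1)\kappa$ in $H^2(\mathbb{P}^{n-1},\mathcal{O})$). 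Torsion-freeness of $\mathcal{O}$ then forces the character through which $U_n(\bF_q)$ acts on $H^{n-1}(S_{n,\ol{\bF}_q},\mathcal{O})$ to be trivial, so $U_n(\bF_q)$ acts trivially on $H^n_{\mathrm{c}}(Y_{n,\ol{\bF}_q},\mathcal{O}) \simeq \mathcal{O}/\ell^a$ and hence on its $\fm$-torsion.

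For case (2), the nontrivial local monodromy of $\mathscr{K}_{\chi_{\ell^a}}$ around $S_n$ obstructs any direct geometric analog; this is the main difficulty. I would instead combine restriction to $\mathit{SU}_n(\bF_q)$ with block theory. Dimension counting via \eqref{ddm}, applied to the $\chi_{\ell^a}=1$ variant of the SES, yields $H^{n-1}_{\mathrm{c}}(Y_{n,\ol{\bF}_q},\mathbb{F}) \simeq \ol{V_n[1]}$; its restriction to $\mathit{SU}_n(\bF_q)$ is $\ol{\chi_{(n-1,1)}}$, which by Lemma \ref{lem:SUmod} (1) (using $\ell \mid q+1$) has exactly two composition factors: the trivial character and an irreducible of dimension $(q^n-1)/(q+1)$. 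Since $\ol{V_n[\chi_{\ell^a}]}|_{\mathit{SU}_n(\bF_q)} = \ol{\chi_{s,(n-1)}}$ is irreducible of the same dimension by the degree-$N$ clause of Lemma \ref{lem:SUmod}, it exhausts the non-trivial constituent, so $C|_{\mathit{SU}_n(\bF_q)}$ is trivial and $\chi'$ factors through the prime-to-$\ell$ quotient $\mu_r$. Finally, $V_n[1] \in \mathcal{E}(U_n,(1)) \subset \mathcal{E}_\ell(U_n,(1))$, so every Brauer constituent of $\ol{V_n[1]}$---and in particular $C$---lies in a block contained in $\mathcal{E}_\ell(U_n,(1))$; were $\chi'$ nontrivial, its Teichm\"uller lift $\tilde\chi'\circ\det$ would lie in $\mathcal{E}(U_n,(s'))$ for a nontrivial $\ell'$-semisimple $s' \in U_n^\ast(\bF_q)$ (via Lemma \ref{lem:compE} applied to $\det$), forcing $C \in \mathcal{E}_\ell(U_n,(s'))$ and contradicting Lemma \ref{lem:diflb}. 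Hence $\chi'=1$.
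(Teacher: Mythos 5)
For part (1) your argument is correct and takes a somewhat different route from the paper: the paper simply quotes \cite[Theorem 1]{HoMaTT} to conclude that $H^{n-1}(S_{n,\overline{\mathbb{F}}_q},\mathcal{O})$ is a trivial $U_n(\bF_q)$-representation and then uses the surjection in \eqref{ltri}. Your version replaces the citation by a self-contained argument: the generator $\kappa^{(n-1)/2}$ of the rank-one module $H^{n-1}(\mathbb{P}^{n-1},\mathcal{O})$ is invariant, its image in the rank-one module $H^{n-1}(S_{n,\overline{\mathbb{F}}_q},\mathcal{O})$ is nonzero (it spans the kernel of the surjection onto $\mathcal{O}/\ell^a$), and torsion-freeness of $\mathcal{O}$ then forces the character by which $U_n(\bF_q)$ acts on the target to be trivial. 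That is a legitimate and more elementary proof of the ingredient the paper outsources to \cite{HoMaTT}.

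For part (2), however, there is a genuine gap. After correctly reducing to the claim that a nontrivial $\overline{\chi}'\circ\det$ cannot be a constituent of $H^{n-1}_{\mathrm{c}}(Y_{n,\overline{\mathbb{F}}_q},\mathbb{F})$, you try to locate the Lusztig series of the lift $\tilde\chi'\circ\det$ by ``applying Lemma \ref{lem:compE} to $\det$.'' But $\det\colon U_n \to U_1$ has kernel $\mathit{SU}_n$, which is connected but not central for $n\geq 2$, so the hypothesis of Lemma \ref{lem:compE} (central connected kernel) is not satisfied and the lemma gives you nothing here. The paper's proof avoids this by invoking the Fong--Srinivasan block theory for unitary groups directly (\cite[Theorem (5D), Theorem (6A)]{FoSrBlgu}) to show that $\chi\circ\det$, for nontrivial $\chi$ of $\ell'$-order, belongs to an $\ell$-block attached to a nontrivial central semisimple class and hence to a different block than the unipotent block containing $H^{n-1}_{\mathrm{c}}(Y_{n,\overline{\mathbb{F}}_q},\overline{\mathbb{Q}}_\ell)$; and it is exactly this appeal to \cite{FoSrBlgu} that is responsible for the standing hypothesis $\ell\neq 2$ in part (2), a hypothesis your proposal never uses — a further sign that the block-theoretic step is not actually established. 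The surrounding structure of your argument (the dimension count giving a one-dimensional cokernel $C$, the identification $C\simeq\chi'\circ\det$ using $n\geq 4$, the observation that $\chi'$ must factor through $\mu_r$, and the strategy of separating Lusztig series via Lemma \ref{lem:diflb}) is sound; what is missing is a valid justification that $\tilde\chi'\circ\det\in\cE(U_n,(s'))$ for a nontrivial $\ell'$-semisimple $s'$, and that step genuinely requires more than Lemma \ref{lem:compE}.
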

\begin{proof}
Assume that $n$ is odd. 
Since $H_{\mathrm{c}}^{n-1}(S_{n,\overline{\mathbb{F}}_q},\mathcal{O})$
is a trivial $\oU_n(\bF_q)$-representation 
by \cite[Theorem 1]{HoMaTT}, 
$H_{\mathrm{c}}^n(Y_{n,\overline{\mathbb{F}}_q},\mathcal{O})$ is so by
\eqref{ltri}. Hence, the first assertion follows. 

We show the second claim. By Proposition \ref{lp} \ref{en:KFd}, 
we have isomorphisms
\begin{gather}\label{hh} 
\begin{aligned}
& H_{\mathrm{c}}^{n-1}(Y_{n,\overline{\mathbb{F}}_q},\mathcal{O}) \otimes_{\mathcal{O}}\mathbb{F} \simeq 
H_{\mathrm{c}}^{n-1}(Y_{n,\overline{\mathbb{F}}_q},\mathbb{F}), \\
& H_{\mathrm{c}}^{n-1}(Y_{n,\overline{\mathbb{F}}_q},\mathcal{O}) \otimes_{\mathcal{O}} \overline{\mathbb{Q}}_{\ell} \simeq 
H_{\mathrm{c}}^{n-1}(Y_{n,\overline{\mathbb{F}}_q},\overline{\mathbb{Q}}_{\ell}) 
\end{aligned}
\end{gather}
and 
a short exact sequence 
\[
0 \to 
H_{\mathrm{c}}^{n-1}(Y_{n,\overline{\mathbb{F}}_q},\mathscr{K}_{\chi_{\ell^a}}) \otimes_{\mathcal{O}}\mathbb{F} \to 
H_{\mathrm{c}}^{n-1}(Y_{n,\overline{\mathbb{F}}_q},\mathbb{F})  \to 
H_{\mathrm{c}}^n(Y_{n,\overline{\mathbb{F}}_q},\mathscr{K}_{\chi_{\ell^a}})[\fm] \to 0.  
\]
By these and Lemma \ref{lem:SUmod}, 
$H_{\mathrm{c}}^n(Y_{n,\overline{\mathbb{F}}_q},\mathscr{K}_{\chi_{\ell^a}})[\fm]$ 
is a trivial $\mathbb{F}[\SU_n(\bF_q)]$-module. 
Hence, the action of $\oU_n(\bF_q)$ on it factorizes through $\det$ 
by $n \geq 4$ and \cite[(1), (8) in the proof of Theorem 3.3]{GerWeil}.  
In the sequel, we need the assumption $\ell \neq 2$, because we apply \cite{FoSrBlgu}.
Let $\chi \in \mu_r^{\vee} \setminus \{1\}$.  
Then there exists a semisimple 
$\ell'$-element $s_{\chi}$ in the center of $\oU_n(\bF_q)$ such that 
the character $\chi \circ \det$ of $\oU_n(\bF_q)$ 
belongs to the $\ell$-block 
corresponding to $s_{\chi}^{\oU_n(\bF_q)}$ in the notation in  
\cite[the first paragraph of \S6]{FoSrBlgu}. 
Then $s_{\chi}$ is non-trivial by 
\cite[p.~116, Theorem (6A)]{FoSrBlgu} 
using the fact that the 
$1$-dimensional unipotent representation of 
$\oU_n(\bF_q)$ is trivial. 
Recall that $H_{\mathrm{c}}^{n-1}(Y_{n,\overline{\mathbb{F}}_q},\overline{\mathbb{Q}}_{\ell})$ is a unipotent 
$\oU_n(\bF_q)$-representation by Proposition \ref{prop:Vn}. Hence it belongs to the block corresponding to $1^{\oU_n(\bF_q)}$. 
The blocks corresponding to $s_{\chi}^{\oU_n(\bF_q)}$ and 
$1^{\oU_n(\bF_q)}$ are distinct by 
\cite[Theorem (5D)]{FoSrBlgu}. 
Hence, $\overline{\chi} \circ \det$ can not appear as a quotient of 
$H_{\mathrm{c}}^{n-1}(Y_{n,\overline{\mathbb{F}}_q},\mathbb{F})$ by \eqref{hh}. 
Therefore, the claim follows. 
\end{proof}

\begin{cor}\label{sde}
We have $\overline{V_n[\chi]}
=H_{\mathrm{c}}^{n-1}(Y_{n,\overline{\mathbb{F}}_q},\mathscr{K}_{\overline{\chi}})$ for any 
$\chi \in \mu_{q+1}^{\vee}$
if $\ell \nmid q+1$, and 
\begin{align*}
& \overline{V_n[\chi_{\ell^a}]}+\frac{1+(-1)^n}{2}=\overline{V_n[1]}+\frac{1-(-1)^n}{2}=H_{\mathrm{c}}^{n-1}(Y_{n,\overline{\mathbb{F}}_q},\ol{\bF}_{\ell}) \ 
\textrm{for any $\chi_{\ell^a} \in \mu_{\ell^a}^{\vee} \setminus \{1\}$}, \\
& \overline{V_n[\chi_{\ell^a} \chi_r]}=\overline{V_n[\chi_r]}=H_{\mathrm{c}}^{n-1}(Y_{n,\overline{\mathbb{F}}_q},\mathscr{K}_{\overline{\chi_r}}) \quad \textrm{for any $\chi_{\ell^a} \in \mu_{\ell^a}^{\vee}$ and 
$\chi_r \in \mu_r^{\vee} \setminus \{1\}$} 
\end{align*}
 if $\ell \mid q+1$ as Brauer characters of $\oU_n(\bF_q)$. 
\end{cor}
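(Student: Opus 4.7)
The plan is to translate the integral cohomology statements of Proposition \ref{lp} into Brauer character identities, then use Lemma \ref{ccc0} to pin down the trivial contributions.

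First I would note that by Lemma \ref{affine}\ref{en:frmid}, the middle cohomology $H_{\mathrm{c}}^{n-1}(Y_{n,\overline{\mathbb{F}}_q},\mathscr{K}_{\chi})$ is a free $\mathcal{O}$-module and hence provides a $U_n(\bF_q)$-stable integral lattice in $V_n[\chi]$. Consequently $\overline{V_n[\chi]}$ equals, as a Brauer character, the class of $H_{\mathrm{c}}^{n-1}(Y_{n,\overline{\mathbb{F}}_q},\mathscr{K}_{\chi}) \otimes_{\mathcal{O}} \mathbb{F}$. For $\ell \nmid q+1$, Proposition \ref{lp}\ref{en:KFnd} directly identifies this reduction with $H_{\mathrm{c}}^{n-1}(Y_{n,\overline{\mathbb{F}}_q},\mathscr{K}_{\overline{\chi}})$, giving the first assertion.

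For $\ell \mid q+1$, I would read the short exact sequence of Proposition \ref{lp}\ref{en:KFd} in the Grothendieck group of $\mathbb{F}[U_n(\bF_q)]$-modules to obtain
\[
\overline{V_n[\chi_{\ell^a}\chi_r]} = [H_{\mathrm{c}}^{n-1}(Y_{n,\overline{\mathbb{F}}_q},\mathscr{K}_{\overline{\chi}_r})] - [H_{\mathrm{c}}^{n}(Y_{n,\overline{\mathbb{F}}_q},\mathscr{K}_{\chi_{\ell^a}\chi_r})[\fm]].
\]
When $\chi_r \neq 1$, the dimension formula of Proposition \ref{lp}\ref{en:KFd} forces the correction term to vanish, which immediately gives the equality $\overline{V_n[\chi_{\ell^a}\chi_r]} = H_{\mathrm{c}}^{n-1}(Y_{n,\overline{\mathbb{F}}_q},\mathscr{K}_{\overline{\chi}_r})$ and, as a byproduct, its independence of $\chi_{\ell^a}$. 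When $\chi_r = 1$, the correction term has dimension $\tfrac{1+(-1)^n}{2}$ if $\chi_{\ell^a} \neq 1$ and $\tfrac{1-(-1)^n}{2}$ if $\chi_{\ell^a} = 1$; invoking Lemma \ref{ccc0} identifies this one-dimensional piece, when nonzero, with the trivial $U_n(\bF_q)$-representation, producing exactly the stated corrections by the factors $\tfrac{1 \pm (-1)^n}{2}$.

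The main obstacle is the triviality of the $U_n(\bF_q)$-action on $H_{\mathrm{c}}^{n}(Y_{n,\overline{\mathbb{F}}_q},\mathscr{K}_{\chi_{\ell^a}})[\fm]$, but this is already handled by Lemma \ref{ccc0}, which rests on nontrivial input from \cite{HoMaTT} for odd $n$ and on $\ell$-block theory from \cite{FoSrBlgu} for even $n \geq 4$ with $\ell \neq 2$. Once these ingredients are in place, the corollary reduces to direct bookkeeping in the Grothendieck group.
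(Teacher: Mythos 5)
Your proposal is correct and follows essentially the same route as the paper, which simply cites Proposition \ref{lp} and Lemma \ref{ccc0}; you have just made the implicit bookkeeping explicit. In particular, your identification of the freeness of $H_{\mathrm{c}}^{n-1}(Y_{n,\overline{\mathbb{F}}_q},\mathscr{K}_{\chi})$ over $\mathcal{O}$ via Lemma \ref{affine}\ref{en:frmid} as the source of the lattice, and the passage to the Grothendieck group using the short exact sequence of Proposition \ref{lp}\ref{en:KFd} together with the triviality from Lemma \ref{ccc0}, is exactly the intended chain of deductions.
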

\begin{proof}
The claims follow from Proposition \ref{lp} and 
Lemma \ref{ccc0}. 
\end{proof}

We deduce the following proposition by combining  
the above theory with Corollary \ref{sde}.  
\begin{prop}\label{ccc}
We assume that $n \geq 3$. 
\begin{enumerate}
\item\label{en:irr1}
Assume $\ell \nmid q+1$. 
The $\oU_n(\bF_q)$-representations 
\[
H_{\mathrm{c}}^{n-1}(Y_{n,\overline{\mathbb{F}}_q},\mathscr{K}_{\xi})
\quad \textrm{for $\xi \in \Hom (\mu_{q+1},\ol{\bF}_{\ell}^{\times})$}
\]
are irreducible. Moreover,
these are distinct.  
\item\label{en:irr2}
Assume $\ell \mid q+1$. 
Moreover, we suppose $\ell \neq 2$ if $n$ is even. 
The middle cohomology 
$H_{\mathrm{c}}^{n-1}(Y_{n,\overline{\mathbb{F}}_q},\ol{\bF}_{\ell})$ has two 
irreducible constituents one of which is a trivial character. The $\oU_n(\bF_q)$-representations 
\[
H_{\mathrm{c}}^{n-1}(Y_{n,\overline{\mathbb{F}}_q},\mathscr{K}_{\xi}) \quad \textrm{for $\xi \in \Hom (\mu_r,\ol{\bF}_{\ell}^{\times}) \setminus \{1\}$}
\] are irreducible and distinct. 
\end{enumerate}
\end{prop}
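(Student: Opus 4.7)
The plan is to reduce everything to the Brauer characters of the reductions $\overline{V_n[\chi]}$ via Corollary \ref{sde}, analyze these by restricting to $\mathit{SU}_n(\bF_q)$ where Lemma \ref{lem:chis} identifies them as Weil characters handled by Lemma \ref{lem:SUmod}, and use the $\ell$-block theory of Lemma \ref{lem:diflb} for distinctness. For part \ref{en:irr1}, Corollary \ref{sde} identifies each $H_{\mathrm{c}}^{n-1}(Y_{n,\overline{\bF}_q},\mathscr{K}_\xi)$ with $\overline{V_n[\chi]}$ as Brauer characters for the unique $\chi \in \mu_{q+1}^{\vee}$ reducing to $\xi$. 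By Lemma \ref{lem:chis}, the restriction $V_n[\chi]|_{\mathit{SU}_n(\bF_q)}$ is a Weil character whose associated element has order dividing $q+1$, hence prime to $\ell$; Lemma \ref{lem:SUmod} then yields irreducibility of the reduction, and irreducibility over $\mathit{SU}_n(\bF_q)$ forces irreducibility over $U_n(\bF_q)$. For distinctness, distinct $\chi$ give distinct elements of $S$ and hence distinct geometric conjugacy classes of $\ell'$-semisimple elements (by the last part of the proof of Lemma \ref{lem:SUmod}), so the corresponding $V_n[\chi]$ lie in disjoint Lusztig series, and Lemma \ref{lem:diflb} places them in distinct $\ell$-blocks.

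For part \ref{en:irr2} with $\xi \in \Hom(\mu_r,\overline{\bF}_\ell^{\times}) \setminus \{1\}$ the same mechanism works: Corollary \ref{sde} gives $H_{\mathrm{c}}^{n-1}(Y_{n,\overline{\bF}_q},\mathscr{K}_{\xi}) = \overline{V_n[\chi_r]}$, the associated $s$ has $\ell'$-order dividing $r$, so Lemma \ref{lem:SUmod} supplies irreducibility (degree-$N$ case when $n$ is even; non-$\ell$-power order case when $n$ is odd), and Lemma \ref{lem:diflb} gives the distinctness among these. For the middle cohomology $H_{\mathrm{c}}^{n-1}(Y_{n,\overline{\bF}_q},\overline{\bF}_\ell)$, Corollary \ref{sde} expresses its Brauer character as $\overline{V_n[1]}$ or $\overline{V_n[\chi_{\ell^a}]}$ up to a trivial summand of multiplicity $(1+(-1)^n)/2$ or $(1-(-1)^n)/2$. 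In both parities of $n$, the relevant $V_n[\bullet]|_{\mathit{SU}_n(\bF_q)}$ has degree $N+1$ and equals either the unipotent Weil character (even $n$, where $\ell \mid q+1$ violates the irreducibility criterion of Lemma \ref{lem:SUmod}) or $\chi_{s,(n-1)}$ with $s$ of $\ell$-power order (odd $n$); hence Lemma \ref{lem:SUmod} gives exactly two irreducible constituents, one trivial. To promote this to the $U_n(\bF_q)$-level and verify the trivial constituent is the untwisted trivial representation, I use Lemma \ref{ccc0}: combined with the short exact sequence of Proposition \ref{lp} \ref{en:KFd}, it exhibits the trivial $U_n(\bF_q)$-character as a quotient of $H_{\mathrm{c}}^{n-1}(Y_{n,\overline{\bF}_q},\overline{\bF}_\ell)$. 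Distinctness from the $\overline{V_n[\chi_r]}$ then follows because both constituents of the middle cohomology lie in the unipotent $\ell$-block $\mathcal{E}_\ell(U_n,(1))$ (as $V_n[1]$ is unipotent and $V_n[\chi_{\ell^a}] \in \mathcal{E}_\ell(U_n,(1))$ since $\chi_{\ell^a}$ corresponds to an $\ell$-element), while $\overline{V_n[\chi_r]}$ for $\chi_r \neq 1$ lies in the Lusztig series of a nontrivial $\ell'$-element.

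The main obstacle is the middle cohomology analysis in part \ref{en:irr2}: one must align three pieces of information --- the Brauer character identity from Corollary \ref{sde}, the two-constituent structure from Lemma \ref{lem:SUmod} on $\mathit{SU}_n(\bF_q)$, and the explicit trivial-character quotient from Lemma \ref{ccc0} --- in order to pin down both composition factors at the $U_n(\bF_q)$-level, and in particular to confirm that the trivial $\mathit{SU}_n(\bF_q)$-constituent lifts to the untwisted trivial $U_n(\bF_q)$-representation rather than to a nontrivial character of $U_n(\bF_q)/\mathit{SU}_n(\bF_q)$.
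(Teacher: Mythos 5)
Your proof is correct and follows essentially the same route as the paper: reduce to Brauer characters via Corollary \ref{sde}, restrict to $\mathit{SU}_n(\bF_q)$ through Lemma \ref{lem:chis} to apply the irreducibility criteria of Lemma \ref{lem:SUmod}, and invoke Lemma \ref{lem:diflb} for distinctness via $\ell$-blocks. The one minor divergence is in pinning down the trivial constituent of $H_{\mathrm{c}}^{n-1}(Y_{n,\overline{\mathbb{F}}_q},\ol{\bF}_{\ell})$: you invoke Lemma \ref{ccc0} directly to rule out a nontrivial character of $U_n(\bF_q)/\mathit{SU}_n(\bF_q)$, whereas the paper reads the trivial $U_n(\bF_q)$-constituent off the explicit $+\,\tfrac{1\pm(-1)^n}{2}$ term already built into Corollary \ref{sde} and pairs it with the degree-$N$ Weil factor (irreducible on $\mathit{SU}_n(\bF_q)$, hence on $U_n(\bF_q)$) --- both are valid, since that corollary is itself derived from Lemma \ref{ccc0}.
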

\begin{proof}
Let $S$ be as in \S \ref{ssec:Lusbl}. 
Let $S' \subset S$ denote the subgroup of order $r$. 
We have 
\[
 \left\{ V_n[\chi_r]|_{\SU_n(\bF_q)} \mid 
 \chi_r \in \mu_r^{\vee} \setminus \{1\} \right\}
 =\left\{ \chi_{s,(n-1)} \mid s \in S' \setminus \{1\} \right\} 
\]
by Lemma \ref{lem:chis}. 
Hence all the claims other than distinction 
follow from 
Proposition \ref{prop:Vn}, 
Lemma \ref{lem:SUmod} 
and Corollary \ref{sde}. 

It remains to show that 
$\ol{\chi}_{(n-1,1)}$ and 
$\ol{\chi}_{s,(n-1)}$ for 
$s \in S' \setminus \{1\}$ are all different. 
This follows from 
Lemma \ref{lem:diflb} and Lemma \ref{lem:SUmod}. 
\end{proof}

\section{Cohomology as representation of symplectic group}\label{mod2}

\subsection{Geometric setting}\label{agd}
Let $n$ be a positive integer. 
The variety $S_{2n}$
is isomorphic to the projective variety $S'_{2n}$
defined by $\sum_{i=1}^n(x_i^q y_i-x_i y_i^q)=0$
in $\mathbb{P}_{\mathbb{F}_q}^{2n-1}$. 
We set $Y'_{2n}=\mathbb{P}_{\mathbb{F}_q}^{2n-1} \setminus S'_{2n}$. Then we have $Y_{2n} \simeq 
Y'_{2n,\mathbb{F}_{q^2}}$. 
Let 
\[
J=\begin{pmatrix}
\mathbf{0}_n & E_n \\
-E_n & \mathbf{0}_n
\end{pmatrix}
 \in \GL_{2n}(\bF_q).
\] 
Let $\Sp_{2n}$ be the symplectic group over $\bF_q$ 
defined by the symplectic form 
\[
 \bF_q^n \times \bF_q^n 
 \to \bF_q;\ 
 (v,v') \mapsto 
 {}^t v J v'. 
\] 
Let 
$\Sp_{2n}(\bF_q)$ act on $\mathbb{P}_{\mathbb{F}_q}^{2n-1}$ by left multiplication. 
This action stabilizes $Y'_{2n}$. 
Let $\widetilde{Y}'_{2n}$
be the affine smooth variety defined by $\sum_{i=1}^n(x_i^q y_i-x_i y_i^q)=1$ in $\mathbb{A}_{\mathbb{F}_q}^{2n}$. This affine variety admits a similar 
action of $\Sp_{2n}(\bF_q)$. 
Similarly to \eqref{torsor1}, 
we have the $\Sp_{2n}(\bF_q)$-equivariant 
$\mu_{q+1}$-covering 
\begin{equation}\label{torsor2}
\widetilde{Y}'_{2n}
\to Y'_{2n};\ (x_1,\ldots,x_n,y_1,\ldots,y_n) 
\mapsto 
[x_1:\cdots:x_n:y_1:\cdots:y_n].
\end{equation}

Let $\mathrm{Fr}_q \in \mathrm{Gal}(\overline{\mathbb{F}}_q/\mathbb{F}_q)$ be the geometric Frobenius automorphism 
defined by $x \mapsto x^{q^{-1}}$ for $x \in \overline{\mathbb{F}}_q$. 
For a separated and of finite type scheme $Z$ over $\mathbb{F}_q$, let $\mathrm{Fr}_q$ denote 
the pull-back of $\mathrm{Fr}_q$ on $H_{\mathrm{c}}^i(Z_{\overline{\mathbb{F}}_q},\overline{\mathbb{Q}}_{\ell})$. 

Let $X'_{2n}$ be the affine smooth variety defined by 
\[
 z^q-z=
 \sum_{i=1}^n(x_i y_i^q - x_i^q y_i ) 
\] 
in $\mathbb{A}_{\mathbb{F}_q}^{2n+1}
=\Spec \mathbb{F}_q[x_1,\ldots,x_n,y_1,\ldots,y_n,z]$. 
We write $v=(x_1,\ldots,x_n,y_1,\ldots,y_n)$.  
Let $\oU_{2n}'$ be the unitary group over $\bF_q$ defined by 
the skew-hermitian form 
\[
 \mathbb{F}_{q^2}^n 
 \times \mathbb{F}_{q^2}^n
 \to \mathbb{F}_{q^2};\ 
 (v,v') \mapsto 
 {}^t \overline{v} J v'. 
\] 
The group $\oU_{2n}'(\bF_q)$ acts on 
$X'_{2n,\mathbb{F}_{q^2}}$ by 
$(v,z) \mapsto (g v,z)$ for $g \in \oU_{2n}'(\bF_q)$.  
Let $\mathbb{F}_q$ act on $X'_{2n}$ by 
$z \mapsto z+\eta$ for $\eta \in \mathbb{F}_q$. 

We put 
\[
 W_{n,\psi} = 
 H_{\mathrm{c}}^{2n}(X'_{2n,\overline{\mathbb{F}}_q},
 \overline{\mathbb{Q}}_{\ell}(n) )[\psi]. 
\]

We identify $\mu_{q+1}$ with the center of 
$\oU_{2n}'(\bF_q)$.  
By \cite[Lemma 3.4]{ITGeomHW}, the geometric Frobenius 
$\mathrm{Fr}_q$ stabilizes $W_{n,\psi}[\chi]$
for $\chi \in \mu_{q+1}^{\vee}$ such that $\chi^2=1$
and acts on it as an involution. 
Let $\kappa \in \{\pm\}$. 
For such $\chi$, let $W_{n,\psi}[\chi]^{\kappa}$ 
denote the $\kappa$-eigenspace of $\mathrm{Fr}_q$. 

Let $\nu$ be the quadratic character of $\mu_{q+1}$ if $p \neq 2$. 

\begin{lem}[{\cite[Lemma 7.1]{ITGeomHW}}]\label{lem:dim}
Let $n \geq 1$. 
We have 
\begin{align*}
\dim W_{n,\psi}[1]^{\kappa}=\frac{(q^n+\kappa)(q^n+\kappa q)}{2(q+1)}, \quad 
\dim W_{n,\psi}[\chi]=\frac{q^{2n}-1}{q+1}
\end{align*}  
for $\kappa \in \{\pm\}$ and $\chi \in \mu_{q+1}^{\vee} \setminus \{1\}$. 
Further, 
\[
 \dim W_{n,\psi}[\nu]^{\kappa} =\frac{q^{2n}-1}{2(q+1)}
\]
for $\kappa \in \{\pm\}$ if $p \neq 2$. 
\end{lem}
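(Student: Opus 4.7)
The plan is to compute $\dim W_{n,\psi}[\chi]$ by relating $Y'_{2n}$ to the $U_{2n}$ case handled in Proposition \ref{prop:Vn} via $Y'_{2n} \otimes_{\bF_q} \bF_{q^2} \simeq Y_{2n}$, and then to split the $\chi^2=1$ cases by computing $\mathrm{tr}(\mathrm{Fr}_q, W_{n,\psi}[\chi])$ through a point count on $X'_{2n}$. The Artin--Schreier analog of \eqref{XLpsi} for the map $\pi' \colon \bA^{2n} \to \bA^1$, $(x,y) \mapsto \sum_i(x_iy_i^q - x_i^qy_i)$, together with the decomposition induced by the $\mu_{q+1}$-torsor \eqref{torsor2}, gives an $\mathit{Sp}_{2n}(\bF_q)$-equivariant isomorphism $W_{n,\psi}[\chi] \simeq H^{2n-1}_{\mathrm{c}}(Y'_{2n,\overline{\bF}_q}, \mathscr{K}_\chi)$, which is furthermore $\mathrm{Fr}_q$-equivariant when $\chi^2=1$. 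Base changing to $\bF_{q^2}$ identifies the underlying vector space with $V_{2n}[\chi]$, whose dimension is $(q^{2n}+q)/(q+1)$ for $\chi=1$ and $(q^{2n}-1)/(q+1)$ otherwise by Proposition \ref{prop:Vn} \ref{en:Vdim}. This yields the second formula in the lemma and the total dimensions $\dim W_{n,\psi}[\chi]^+ + \dim W_{n,\psi}[\chi]^-$ for $\chi \in \{1, \nu\}$.

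For the split, set $t_\chi := \mathrm{tr}(\mathrm{Fr}_q, W_{n,\psi}[\chi])$; as $\mathrm{Fr}_q$ is an involution, $\dim W_{n,\psi}[\chi]^\pm = (\dim W_{n,\psi}[\chi] \pm t_\chi)/2$. I would compute $t_\chi$ by Grothendieck--Lefschetz on $X'_{2n}$: for $(x,y) \in \bF_q^{2n}$ each summand $x_iy_i^q - x_i^qy_i$ vanishes, so the defining equation reduces to $z^q = z$ and $|X'_{2n}(\bF_q)| = q^{2n+1}$; the analogous twisted fixed-point counts for $\eta_* \circ \mathrm{Fr}_q$ with $\eta \in \bF_q^\times$ are empty. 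Averaging over $\psi$ and using the concentration of $H^i_{\mathrm{c}}(X'_{2n})[\psi]$ in degree $i=2n$ (which follows from $\pi'$ having an isolated critical point at the origin, so that the exponential sheaf on $\bA^{2n}$ has cohomology only in middle degree), one obtains $\mathrm{tr}(\mathrm{Fr}_q, W_{n,\psi}) = q^n$ after the Tate twist by $n$. Because $\mathrm{Fr}_q$ exchanges $W_{n,\psi}[\chi]$ and $W_{n,\psi}[\chi^{-1}]$, only $\chi^2=1$ contributes, so $\sum_{\chi^2=1} t_\chi = q^n$. This closes the proof when $p=2$, where $\chi=1$ is the only such character and $t_1=q^n$ forces $\dim W_{n,\psi}[1]^\pm = (q^n \pm 1)(q^n \pm q)/(2(q+1))$.

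When $p \neq 2$, to separate $t_1$ from $t_\nu$, I would refine the Lefschetz calculation by inserting a second projection along $\mu_{q+1}$-characters: working over $\bF_{q^2}$ (where $\mu_{q+1}$ acts), combine the $\psi$-average with a $\chi$-average against $\zeta \in \mu_{q+1}(\overline{\bF}_q)$ acting on $X'_{2n,\bF_{q^2}}$ by $(x,y) \mapsto (\zeta x, \zeta y)$, and count fixed points of $\zeta_* \circ \mathrm{Fr}_q$. These twisted fixed-point sets reduce to Gauss-sum-type expressions; because $\nu$ is the unique nontrivial quadratic character of $\mu_{q+1}(\overline{\bF}_q)$, the relevant Gauss sum vanishes, yielding $t_\nu = 0$ and hence $t_1 = q^n$. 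Substituting gives $\dim W_{n,\psi}[1]^\pm = (q^n \pm 1)(q^n \pm q)/(2(q+1))$ and $\dim W_{n,\psi}[\nu]^\pm = (q^{2n}-1)/(2(q+1))$. The main obstacle is the cohomological concentration $H^i_{\mathrm{c}}(X'_{2n})[\psi] = 0$ for $i \neq 2n$, which is what allows the Lefschetz alternating sum to extract a single Frobenius trace rather than an alternating combination; a secondary subtlety is that the $\mu_{q+1}$-action used in the second projection is only defined over $\bF_{q^2}$, and $\mathrm{Fr}_q$ acts on $\mu_{q+1}$ by $\zeta \mapsto \zeta^{-1}$, so the Gauss-sum separation of $t_1$ and $t_\nu$ must be organized carefully around this interaction.
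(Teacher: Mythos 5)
This lemma is quoted verbatim from \cite[Lemma 6.1]{ITGeomHW} and is not reproved in the paper, so there is no in-paper proof to compare against. That said, your strategy is essentially the natural one and is in the spirit of the trace computations that \emph{do} appear in \S\ref{try0}. The dimension bookkeeping via $Y'_{2n,\bF_{q^2}}\simeq Y_{2n}$ and Proposition~\ref{prop:Vn}, the identity $\dim W_{n,\psi}[\chi]^{\pm}=\tfrac{1}{2}(\dim W_{n,\psi}[\chi]\pm t_{\chi})$, the point count $\lvert X'_{2n}(\bF_q)\rvert=q^{2n+1}$ with empty $\eta$-twisted fixed-point sets for $\eta\neq 0$, and the resulting $\sum_{\chi^2=1}t_{\chi}=q^n$ all check out, and in particular the $p=2$ case closes immediately. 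Two places where your write-up is imprecise and would need tightening. First, the claim that $H_{\mathrm c}^{i}(X'_{2n})[\psi]$ is concentrated in degree $2n$ ``because $\pi'$ has an isolated critical point'' is a heuristic carried over from the complex setting; in the $\ell$-adic setting the clean argument is K\"unneth, which reduces to the $n=1$ surface $X'_2$, where the concentration comes from affine vanishing plus a smooth compactification with well-understood boundary (exactly the mechanism used for $\overline X$ in \S\ref{try0}, and what \cite[Lemma 3.3]{ITGeomHW} records). Second, your description of the $p\neq 2$ split as a ``Gauss-sum cancellation'' misidentifies what is actually happening: the $\zeta$-twisted fixed-point count $\lvert X'^{\eta\zeta F}_{2n}\rvert = q^{2n+1}\delta_{\eta,0}$ is literally independent of $\zeta\in\mu_{q+1}$ (the conditions $\zeta x_i^q=x_i$, $\zeta y_i^q=y_i$ force $x_iy_i^q-x_i^qy_i=0$ for every $\zeta$), so averaging against the nontrivial character $\nu$ gives $t_{\nu}=0$ by plain orthogonality of characters. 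The genuine quadratic Gauss sum $G(\psi)$ enters only later, in Proposition~\ref{433}, when one additionally twists by the unipotent element $u$; it plays no role in the dimension formula you are proving.
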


Let $\Lambda \in \{\overline{\mathbb{Q}}_{\ell}, \mathbb{F}\}$ 
and 
$\psi \in \Hom (\mathbb{F}_q,\Lambda^{\times}) \setminus \{1\}$. 
Let 
\begin{equation*}
\pi' \colon 
\mathbb{A}_{\mathbb{F}_q}^{2n} \to \mathbb{A}_{\mathbb{F}_q}^1;\ 
((x_i)_{1 \leq i \leq n},(y_i)_{1 \leq i \leq n}) 
\mapsto 
\sum_{i=1}^n (x_iy_i^q-x_i^qy_i). 
\end{equation*}
Then we have a natural isomorphism 
\begin{equation}\label{eq:X'Lpsi}
 H_{\mathrm{c}}^{2n}(X'_{2n,\overline{\mathbb{F}}_q},
 \Lambda )[\psi] \simeq 
 H_{\mathrm{c}}^{2n}(\mathbb{A}^{2n},\pi'^\ast \mathscr{L}_{\psi}). 
\end{equation}
Let $\mathbf{0} \in \mathbb{A}_{\mathbb{F}_q}^{2n}$
be the origin.
Let $U'=\pi'^{-1}(\bG_{\mathrm{m},\mathbb{F}_q})$, 
$Z'=\pi'^{-1}(0)$ and $Z'^0=Z' \setminus \{\mathbf{0}\}$.  
In the following, for a $\mu_{q+1}$-representation $M$ over $\Lambda$, 
let $M[1]$ denote the $\mu_{q+1}$-fixed part of $M$. 

\begin{lem}\label{tl}
Assume that $q+1$ is invertible in $\Lambda$ and $n \geq 2$. 
Then we have 
\[
H_{\mathrm{c}}^{2n}(Y'_{2n,\overline{\mathbb{F}}_q},\Lambda)=0, \quad H_{\mathrm{c}}^{2n+1}(U'_{\overline{\mathbb{F}}_q},\pi'^\ast \mathscr{L}_{\psi})[1]=0. 
\] 
\end{lem}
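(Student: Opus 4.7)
The plan is to reduce both assertions to the single vanishing $H^{2n}_{\mathrm{c}}(Y'_{2n,\ol{\bF}_q},\Lambda)=0$; since $Y_{2n}\simeq Y'_{2n,\bF_{q^2}}$, the two varieties coincide after base change to $\ol{\bF}_q$, and I may freely interchange them in the sequel.

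For the first claim I would invoke \eqref{T} with $n$ there replaced by $2n$. Since $n\ge 2$, the even index $i=2n$ lies in the range $2n\le i<4n-2$, giving $H^{2n}_{\mathrm{c}}(Y_{2n,\ol{\bF}_q},\mathcal{O})=0$. For $\Lambda=\ol{\bQ}_{\ell}$ the claim then follows by tensoring. For $\Lambda=\mathbb{F}$, the hypothesis $q+1\in\Lambda^{\times}$ forces $\ell\nmid q+1$, so $a=0$ in the notation preceding \eqref{T}; hence $H^{2n+1}_{\mathrm{c}}(Y_{2n,\ol{\bF}_q},\mathcal{O})\simeq\mathcal{O}/\ell^{a}=0$, and the universal-coefficient short exact sequence collapses to $H^{2n}_{\mathrm{c}}(Y_{2n,\ol{\bF}_q},\mathbb{F})=0$.

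For the second claim I would exploit the free $\mu_{q+1}$-action on $U'$ by coordinate scaling. Since $\pi'(\zeta v)=\zeta^{q+1}\pi'(v)=\pi'(v)$ for $\zeta\in\mu_{q+1}$, the morphism $\pi'$ factors through a map $\ol{\pi}'\colon U'/\mu_{q+1}\to\bG_{\mathrm{m}}$, and $\pi'^{\ast}\mathscr{L}_{\psi}$ is the pull-back of $\ol{\pi}'^{\ast}\mathscr{L}_{\psi}$ along the $\mu_{q+1}$-torsor $U'\to U'/\mu_{q+1}$. Because $q+1$ is invertible in $\Lambda$, the argument of Lemma \ref{affine2}\ref{en:Ginv}, adapted to twisted coefficients via the projection formula for the finite \'etale map $U'\to U'/\mu_{q+1}$, yields
\[
 H^{2n+1}_{\mathrm{c}}(U'_{\ol{\bF}_q},\pi'^{\ast}\mathscr{L}_{\psi})[1]\simeq H^{2n+1}_{\mathrm{c}}((U'/\mu_{q+1})_{\ol{\bF}_q},\ol{\pi}'^{\ast}\mathscr{L}_{\psi}).
\]
I would then check that $v\mapsto([v],\pi'(v))$ defines an isomorphism $U'/\mu_{q+1}\simeq Y'_{2n}\times\bG_{\mathrm{m}}$ under which $\ol{\pi}'$ becomes the second projection, using that $U'\to Y'_{2n}$ is the restriction of the scaling $\bG_{\mathrm{m}}$-torsor and that $\pi'$ trivializes the induced $\bG_{\mathrm{m}}\simeq\bG_{\mathrm{m}}/\mu_{q+1}$-torsor $U'/\mu_{q+1}\to Y'_{2n}$.

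Finally, the K\"unneth formula decomposes
\[
 H^{2n+1}_{\mathrm{c}}(Y'_{2n}\times\bG_{\mathrm{m}},\pr_2^{\ast}\mathscr{L}_{\psi})\simeq\bigoplus_{i+j=2n+1}H^i_{\mathrm{c}}(Y'_{2n},\Lambda)\otimes H^j_{\mathrm{c}}(\bG_{\mathrm{m}},\mathscr{L}_{\psi}),
\]
and a standard Grothendieck--Ogg--Shafarevich computation shows $H^j_{\mathrm{c}}(\bG_{\mathrm{m}},\mathscr{L}_{\psi})=0$ for $j\neq 1$. Thus only the $(i,j)=(2n,1)$ summand survives, which vanishes by the first claim. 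The main obstacle is the clean identification $U'/\mu_{q+1}\simeq Y'_{2n}\times\bG_{\mathrm{m}}$ together with the verification that Lemma \ref{affine2}\ref{en:Ginv} extends to the twisted sheaf $\ol{\pi}'^{\ast}\mathscr{L}_{\psi}$; once these bookkeeping points are settled, the rest is formal.
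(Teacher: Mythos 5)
Your proof is correct, and for the first assertion it follows the paper verbatim: invoke \eqref{T} with $n$ replaced by $2n$, use $Y_{2n,\ol{\bF}_q}\simeq Y'_{2n,\ol{\bF}_q}$, and observe that the hypothesis that $q+1$ is invertible forces $a=0$ when $\Lambda=\bF$, killing both $H^{2n}_{\mathrm{c}}(Y'_{2n,\ol{\bF}_q},\cO)$ (which is already zero as $2n$ is even) and the torsion contribution from $H^{2n+1}_{\mathrm{c}}(Y'_{2n,\ol{\bF}_q},\cO)\simeq\cO/\ell^a$ in the universal-coefficient sequence.

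For the second assertion the paper simply cites the first claim together with \cite[Lemma 3.3, Remark 6.10]{ITGeomHW}, whereas you reconstruct that citation from scratch. Your reconstruction is sound: $\mu_{q+1}$ acts freely on $U'$, the map $v\mapsto([v],\pi'(v))$ descends to an isomorphism $U'/\mu_{q+1}\xrightarrow{\sim} Y'_{2n}\times\bG_{\mathrm{m}}$ (indeed $U'\to Y'_{2n}$ is a $\bG_{\mathrm{m}}$-torsor, on whose fibres $\pi'$ is the $(q+1)$-power map, so $U'\to Y'_{2n}\times\bG_{\mathrm{m}}$ is finite \'etale of degree $q+1$ and the quotient by the free $\mu_{q+1}$-action is degree one); then the $G$-invariants isomorphism for a torsor with $|G|\in\Lambda^\times$, K\"unneth with field coefficients, and the standard fact that $\sL_\psi$ on $\bG_{\mathrm{m}}$ has cohomology concentrated in degree one, leaving only the $H^{2n}_{\mathrm{c}}(Y'_{2n,\ol{\bF}_q},\Lambda)\otimes H^1_{\mathrm{c}}(\bG_{\mathrm{m}},\sL_\psi)$ summand, which vanishes by the first claim. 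The trade-off is the usual one: the paper's proof is shorter and leans on the companion paper's bookkeeping lemmas, while yours is self-contained and makes the product structure $U'/\mu_{q+1}\simeq Y'_{2n}\times\bG_{\mathrm{m}}$, which is implicit in those citations, fully explicit; this is essentially the same geometric mechanism as what \cite[Lemma 3.3]{ITGeomHW} encapsulates, so the logical route converges even though the presentation is independent.
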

\begin{proof}
The first claim follows from \eqref{T} using 
the isomorphism 
$Y_{2n,\overline{\mathbb{F}}_q} \simeq Y'_{2n,\overline{\mathbb{F}}_q}$ and 
the assumption 
that $q+1$ is invertible in $\Lambda$. 
The second claim follows from the first one and 
\cite[Lemma 4.3, Remark 7.11]{ITGeomHW}. 
\end{proof}

\begin{lem}\label{tl2}
Assume that $q+1$ is invertible in $\Lambda$. 
We have an isomorphism
\[
H_{\mathrm{c}}^{2n}(\mathbb{A}^{2n},\pi'^\ast \mathscr{L}_{\psi})[1] \simeq 
H_{\mathrm{c}}^{2n-1}(Y'_{2n,\overline{\mathbb{F}}_q},\Lambda (-1)) 
\] 
as representations of 
$\oU_{2n}'(\bF_q)$ and $\Gal (\ol{\bF}_q/\bF_q)$. 
\end{lem}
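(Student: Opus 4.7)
The plan is to obtain the isomorphism from the excision long exact sequence for the decomposition $\mathbb{A}^{2n} = U' \sqcup Z'$ with $Z' = \pi'^{-1}(0)$, combined with a descent for $U'$ along the $\mu_{q+1}$-action. Since $\mathscr{L}_\psi|_0 \simeq \Lambda$, we have $\pi'^\ast\mathscr{L}_\psi|_{Z'} \simeq \Lambda$; and since $q+1$ is invertible in $\Lambda$, taking $\mu_{q+1}$-invariants is exact. The subvariety $Z'$ is stable under the scalar $\bG_{\mathrm m}$-action (because $\pi'$ is homogeneous of degree $q+1$), so connectedness forces the $\mu_{q+1}$-action on $H_{\mathrm c}^\bullet(Z',\Lambda)$ to be trivial.

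The key descent step is the morphism $\phi\colon U'\to Y'_{2n}\times\bG_{\mathrm m}$, $v\mapsto([v],\pi'(v))$: it is $\mu_{q+1}$-invariant (as $\pi'(\zeta v)=\zeta^{q+1}\pi'(v)=\pi'(v)$) and realizes $U'$ as a $\mu_{q+1}$-torsor over $Y'_{2n}\times\bG_{\mathrm m}$, with $\pi'^\ast\mathscr{L}_\psi$ descending to $\mathrm{pr}_2^\ast\mathscr{L}_\psi$. Künneth together with $H_{\mathrm c}^\bullet(\bG_{\mathrm m},\mathscr{L}_\psi)\simeq\Lambda[-1]$ then yields $H_{\mathrm c}^i(U',\pi'^\ast\mathscr{L}_\psi)[1]\simeq H_{\mathrm c}^{i-1}(Y'_{2n},\Lambda)$. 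Combined with affine vanishing (which gives $H_{\mathrm c}^{2n-1}(\mathbb{A}^{2n},\pi'^\ast\mathscr{L}_\psi)=0$ and $H_{\mathrm c}^{2n-2}(Y'_{2n},\Lambda)=0$) and with $H_{\mathrm c}^{2n+1}(U',\pi'^\ast\mathscr{L}_\psi)[1]=0$ from Lemma \ref{tl}, the $\mu_{q+1}$-invariant excision sequence collapses to a four-term exact sequence
\[
0 \to H_{\mathrm c}^{2n-1}(Z',\Lambda) \to H_{\mathrm c}^{2n-1}(Y'_{2n},\Lambda) \to H_{\mathrm c}^{2n}(\mathbb{A}^{2n},\pi'^\ast\mathscr{L}_\psi)[1] \to H_{\mathrm c}^{2n}(Z',\Lambda) \to 0.
\]

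To finish, analyze $Z' = Z'^0 \sqcup \{0\}$, where $Z'^0\to S'_{2n}$ is the $\bG_{\mathrm m}$-torsor restricted from the tautological $\mathcal{O}(-1)$-torsor $\mathbb{A}^{2n}\setminus\{0\}\to\mathbb{P}^{2n-1}$. Thom isomorphism for the associated line bundle, together with Lefschetz for the smooth hypersurface $S'_{2n}$, identifies $H_{\mathrm c}^{2n-1}(Z',\Lambda)$ with the primitive middle cohomology $P^{2n-2}(S'_{2n})$ and $H_{\mathrm c}^{2n}(Z',\Lambda)$ with $P^{2n-2}(S'_{2n})(-1)$; the Tate twist is precisely the Thom twist of the $\mathcal{O}(-1)$-torsor. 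On the other hand \eqref{ltri} applied to $S'_{2n}\subset\mathbb{P}^{2n-1}$ with complement $Y'_{2n}$ gives the companion $H_{\mathrm c}^{2n-1}(Y'_{2n},\Lambda)\simeq P^{2n-2}(S'_{2n})$. The first arrow in the four-term sequence is then the canonical comparison between the two realizations of $P^{2n-2}(S'_{2n})$ as subquotients of $H^{2n-2}(S'_{2n})$, so it is an isomorphism, and the sequence collapses to
\[
H_{\mathrm c}^{2n}(\mathbb{A}^{2n},\pi'^\ast\mathscr{L}_\psi)[1] \xrightarrow{\sim} H_{\mathrm c}^{2n}(Z',\Lambda) \simeq H_{\mathrm c}^{2n-1}(Y'_{2n},\Lambda(-1)),
\]
as required. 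The main obstacle will be justifying that this boundary map really is the canonical comparison: dimensions agree by Proposition \ref{prop:Vn} and the classical Fermat formula, but a careful geometric identification is needed to ensure the isomorphism is both $U_{2n}'(\bF_q)$- and Galois-equivariant, with the twist $(-1)$ tracked consistently through the Thom class.
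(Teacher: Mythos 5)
Your approach is genuinely different from the paper's. The paper works at the top end of the four-term sequence: it builds a map
\[
 H_{\mathrm{c}}^{2n}(\mathbb{A}^{2n},\pi'^\ast \mathscr{L}_{\psi})[1]
 \twoheadrightarrow H_{\mathrm{c}}^{2n}(Z'_{\overline{\mathbb{F}}_q},\Lambda)[1]
 \to H^{2n-2}(S'_{2n,\overline{\mathbb{F}}_q},\Lambda(-1))
 \twoheadrightarrow H_{\mathrm{c}}^{2n-1}(Y'_{2n,\overline{\mathbb{F}}_q},\Lambda(-1)),
\]
where the second arrow comes from [ITGeomHW, Lemma 3.4] and has cokernel a sum of trivial representations, and then argues: the target is an irreducible $U_{2n}'(\bF_q)$-representation of dimension $>1$ by Proposition~\ref{ccc}~\ref{en:irr1}, so the composite (whose cokernel consists of trivials) is surjective, and a dimension count finishes. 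You instead try to collapse the sequence at its other end, by showing the injective boundary map $H_{\mathrm{c}}^{2n-1}(Z',\Lambda) \to H_{\mathrm{c}}^{2n-1}(Y'_{2n,\overline{\mathbb{F}}_q},\Lambda)$ is an isomorphism via Gysin/Lefschetz identification of both sides with the primitive middle cohomology $P^{2n-2}(S'_{2n})$. If carried through, this route has the merit of avoiding the representation-theoretic input (Proposition~\ref{ccc}) entirely.

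The gap you flag at the end, however, is real but for a slightly different reason than you think, and can be closed more cheaply than you suggest. You do \emph{not} need to identify the boundary map with ``the canonical comparison'': once you know that $H_{\mathrm{c}}^{2n-1}(\mathbb{A}^{2n},\pi'^\ast\mathscr{L}_{\psi})[1]=0$ (affine vanishing), the boundary map is automatically injective, and if you can verify $\dim_{\Lambda} H_{\mathrm{c}}^{2n-1}(Z',\Lambda)=\dim_{\Lambda} H_{\mathrm{c}}^{2n-1}(Y'_{2n,\overline{\mathbb{F}}_q},\Lambda)$, it is an isomorphism of $\Lambda$-modules; since every arrow in the excision sequence and in your Gysin identification $H_{\mathrm{c}}^{2n}(Z',\Lambda)\simeq P^{2n-2}(S'_{2n})(-1)\simeq H_{\mathrm{c}}^{2n-1}(Y'_{2n,\overline{\mathbb{F}}_q},\Lambda(-1))$ is $U_{2n}'(\bF_q)\times\Gal(\ol{\bF}_q/\bF_q)$-equivariant by functoriality, the resulting isomorphism is automatically equivariant, and the twist is handled by the Thom/Gysin sequence as you describe. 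The real issue is that this dimension equality, and the Lefschetz-decomposition identifications that feed it, must hold over $\Lambda=\bF$ (not just $\ol{\bQ}_\ell$), which when $\ell\nmid q+1$ reduces to the torsion-freeness of $H^\ast(S'_{2n},\cO)$ and its compatibility with mod-$\ell$ reduction; this is exactly the content of Lemma~\ref{free} and the computations of \S\ref{mod1} built on it. You cite Proposition~\ref{prop:Vn} for dimensions, but that proposition is a $\ol{\bQ}_\ell$-statement; passing to $\bF$ requires the material around equations \eqref{T}, \eqref{T2}, \eqref{T2-5}. So your route works, but should be completed by (i) replacing the hand-wave about ``canonical comparison'' with the injectivity-plus-dimension argument, and (ii) explicitly invoking the $\ell$-torsion-freeness results for the Fermat hypersurface to establish the mod-$\ell$ dimensions.
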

\begin{proof}
Since $\pi'^\ast \mathscr{L}_{\psi}|_{Z'}=\Lambda$, 
we have an exact sequence 
\begin{equation*}
 H_{\mathrm{c}}^{2n}(\mathbb{A}^{2n},\pi'^\ast \mathscr{L}_{\psi})[1] 
 \longrightarrow 
 H_{\mathrm{c}}^{2n}(Z'_{\overline{\mathbb{F}}_q},\Lambda)[1] \to 0 
\end{equation*} 
by Lemma \ref{tl} and the assumption that $q+1$ is invertible in $\Lambda$. 
By $n \geq 1$ and \cite[Lemma 4.4 (3), Remark 7.11]{ITGeomHW}, we have 
\[
 H_{\mathrm{c}}^{2n}(Z'_{\overline{\mathbb{F}}_q},\Lambda)[1] \simeq 
 H_{\mathrm{c}}^{2n}(Z'^0_{\overline{\mathbb{F}}_q},\Lambda)[1] 
 = H_{\mathrm{c}}^{2n}(Z'^0_{\overline{\mathbb{F}}_q},\Lambda). 
\]
We have a morphism 
\[
 H_{\mathrm{c}}^{2n}(Z'^0_{\overline{\mathbb{F}}_q},\Lambda) \longrightarrow 
 H^{2n-2}(S'_{2n,\overline{\mathbb{F}}_q},\Lambda(-1) ) 
\]
by \cite[Lemma 4.4, Remark 7.11]{ITGeomHW}, whose cokernel is a sum of trivial representations. 
Further we have a surjective morphism 
\[
 H^{2n-2}(S'_{2n,\overline{\mathbb{F}}_q},\Lambda(-1) ) \longrightarrow 
 H^{2n-1}(Y'_{2n,\overline{\mathbb{F}}_q},\Lambda(-1) ) 
\]
by the long exact sequence for 
$Y'_{2n}=\mathbb{P}_{\mathbb{F}_q}^{2n-1} \setminus S'_{2n}$. 
Consider the composition of the above morphisms 
\begin{equation}\label{eq:comp}
 H_{\mathrm{c}}^{2n}(\mathbb{A}^{2n},\pi'^\ast \mathscr{L}_{\psi})[1] 
 \longrightarrow 
 H^{2n-1}(Y'_{2n,\overline{\mathbb{F}}_q},\Lambda(-1) ) . 
\end{equation}
We have 
\begin{equation}\label{eq:AYdim}
 \dim H_{\mathrm{c}}^{2n}(\mathbb{A}^{2n},\pi'^\ast \mathscr{L}_{\psi})[1]
 = \dim H^{2n-1}(Y'_{2n,\overline{\mathbb{F}}_q},\Lambda(-1) ) 
 =\frac{q^{2n}+q}{q+1}. 
\end{equation} 
by \cite[(2.6), Proposition 2.6, Lemma 4.2]{ITGeomHW}, 
Proposition \ref{prop:Vn} \ref{en:Vdim}, 
\eqref{T2} and \eqref{T2-5}. 
The $\oU_{2n}(\bF_q)$-representation 
$H^{2n-1}(Y'_{2n,\overline{\mathbb{F}}_q},\Lambda(-1) )$ 
is irreducible of dimension greater than $1$ 
by Proposition \ref{ccc} \ref{en:irr1} and \eqref{eq:AYdim}. 
Hence \eqref{eq:comp} is surjective, 
since the cokernel of \eqref{eq:comp} is 
a sum of trivial representations. 
Therefore 
\eqref{eq:comp} is an isomorphism 
by \eqref{eq:AYdim}. 
\end{proof}

\subsection{Invariant part}\label{ssec:Inv}
In this subsection, we study some invariant parts of 
$H_{\mathrm{c}}^{2n-1}(Y'_{2n,\overline{\mathbb{F}}_q},\mathbb{F})$. 

Let $U$ be the unipotent radical of the Borel subgroup of 
$\SL_2$ consisting of upper triangular matrices. 
Recall that we have the isomorphisms 
\begin{gather}\label{dickson}
\begin{aligned}
& \mathbb{A}_{\mathbb{F}_q}^2/U(\bF_q) 
\xrightarrow{\sim} \mathbb{A}_{\mathbb{F}_q}^2;\ 
(x,y) \mapsto (x^q-xy^{q-1},y), \\
& \mathbb{A}_{\mathbb{F}_q}^2/\SL_2(\bF_q) \xrightarrow{\sim} \mathbb{A}_{\mathbb{F}_q}^2;\ (x,y) \mapsto \left(x^qy-xy^q,\frac{x^{q^2}y-xy^{q^2}}{x^qy-xy^q}\right) 
\end{aligned}
\end{gather}
(\cf \cite[Exercise 2.2 (b), (e)]{BonRepSL2}).

We regard a product group 
$\SL_2(\bF_q)^n$ as a subgroup of 
$\Sp_{2n}(\bF_q)$ by the injective homomorphism 
\begin{equation*}
\SL_2(\bF_q)^n \hookrightarrow \Sp_{2n}(\bF_q);\ 
\left(\begin{pmatrix}
a_i & b_i \\
c_i & d_i 
\end{pmatrix}
\right)_{1 \leq i \leq n} \mapsto 
\begin{pmatrix}
\mathrm{diag}(a_1,\ldots,a_n) & \mathrm{diag}(b_1,\ldots,b_n) \\
\mathrm{diag}(c_1,\ldots,c_n) & \mathrm{diag}(d_1,\ldots,d_n)
\end{pmatrix}. 
\end{equation*}
We understand the quotients 
$\widetilde{Y}'_{2n}/U(\bF_q)^n$ and 
$\widetilde{Y}'_{2n}/\SL_2(\bF_q)^n$, respectively. 
By \eqref{dickson}, we have the isomorphisms 
\begin{gather}\label{dickson2}
\begin{aligned}
& \widetilde{Y}'_{2n}/U(\bF_q)^n
\xrightarrow{\sim} 
\left\{((s_i)_{1 \leq i \leq n},(t_i)_{1 \leq i \leq n}) \in \mathbb{A}_{\mathbb{F}_{q^2}}^{2n}\ \Big|\  \sum_{i=1}^n s_i t_i=1\right\}; \\ 
& ((x_i)_{1 \leq i \leq n},(y_i)_{1 \leq i \leq n}) \mapsto 
\left((x_i^q-x_iy_i^{q-1})_{1 \leq i \leq n}, (y_i)_{1 \leq i \leq n}\right), \\
& \widetilde{Y}'_{2n}/\SL_2(\bF_q)^n
\xrightarrow{\sim} 
\left\{((s_i)_{1 \leq i \leq n},(t_i)_{1 \leq i \leq n}) \in \mathbb{A}_{\mathbb{F}_{q^2}}^{2n}\ \Big|\  \sum_{i=1}^n s_i=1\right\} \simeq 
\mathbb{A}_{\mathbb{F}_{q^2}}^{2n-1};\\  
& ((x_i)_{1 \leq i \leq n},(y_i)_{1 \leq i \leq n}) \mapsto 
\left((x_i^q y_i-x_i y_i^q)_{1 \leq i \leq n},
\left(\frac{x_i^{q^2} y_i-x_i y_i^{q^2}}{x_i^qy_i-x_iy_i^q}\right)_{1 \leq i \leq n}\right). 
\end{aligned}
\end{gather}
The actions of $U(\bF_q)^n$ and $\SL_2(\bF_q)^n$ on $\widetilde{Y}'_{2n}$ are not free if $n \geq 2$. 

The following proposition plays a key role to show 
Proposition \ref{sum} and corresponding results in 
the case where $p \neq 2$.
\begin{prop}\label{sol}
\begin{enumerate}
\item\label{en:invtY} 
We have 
\[
H_{\mathrm{c}}^{2n-1}(\widetilde{Y}'_{2n,\overline{\mathbb{F}}_q},\mathbb{F})^{\SL_2(\bF_q)^n}=0, \quad H_{\mathrm{c}}^{2n-1}(\widetilde{Y}'_{2n,\overline{\mathbb{F}}_q},\mathbb{F})^{U(\bF_q)^n} \simeq \mathbb{F}.
\] 
\item\label{en:invY} 
We have 
\[
 H_{\mathrm{c}}^{2n-1}(Y'_{2n,\overline{\mathbb{F}}_q},\mathbb{F})^{\Sp_{2n}(\bF_q)}=0,\quad   
 H_{\mathrm{c}}^{2n-1}(Y'_{2n,\overline{\mathbb{F}}_q},\bF )^{U(\bF_q)^n}
\simeq \bF . 
\]
\end{enumerate}
\end{prop}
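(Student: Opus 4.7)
My approach centers on the explicit identifications of GIT quotients in \eqref{dickson2}: namely $\widetilde{Y}'_{2n}/U(\bF_q)^n \simeq Q := \{\sum_{i=1}^n s_i t_i = 1\} \subset \mathbb{A}^{2n}$ and $\widetilde{Y}'_{2n}/\mathit{SL}_2(\bF_q)^n \simeq \mathbb{A}^{2n-1}$. The key general principle is that for a finite quotient $\pi \colon X \to X/G$ with $|G|$ invertible in $\mathbb{F}$, one has $H^i_c(X/G, \mathbb{F}) \simeq H^i_c(X, \mathbb{F})^G$; this follows from the sheaf identity $(\pi_\ast \mathbb{F}_X)^G = \mathbb{F}_{X/G}$ together with exactness of $(-)^G$.

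For the $U(\bF_q)^n$-invariants in part \ref{en:invtY}, since $|U(\bF_q)^n| = q^n$ is invertible in $\mathbb{F}$, this gives $H^{2n-1}_c(\widetilde{Y}'_{2n}, \mathbb{F})^{U(\bF_q)^n} \simeq H^{2n-1}_c(Q, \mathbb{F})$. The projection $(s,t) \mapsto s$ realizes $Q$ as a Zariski-locally trivial affine $\mathbb{A}^{n-1}$-bundle over $\mathbb{A}^n \setminus \{0\}$, so $H^{2n-1}_c(Q, \mathbb{F}) \simeq H^1_c(\mathbb{A}^n \setminus \{0\}, \mathbb{F}) \simeq \mathbb{F}$, where the last isomorphism comes from the excision sequence for $\{0\} \hookrightarrow \mathbb{A}^n$. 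The same principle immediately yields $H^{2n-1}_c(\widetilde{Y}'_{2n}, \mathbb{F})^{\mathit{SL}_2(\bF_q)^n} \simeq H^{2n-1}_c(\mathbb{A}^{2n-1}, \mathbb{F}) = 0$ whenever $|\mathit{SL}_2(\bF_q)^n|$ is invertible in $\mathbb{F}$.

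The case $\ell \mid q^2 - 1$ is the main obstacle. Here $H^{2n-1}_c(\widetilde{Y}'_{2n}, \mathbb{F})^{\mathit{SL}_2(\bF_q)^n} \subseteq V := H^{2n-1}_c(\widetilde{Y}'_{2n}, \mathbb{F})^{U(\bF_q)^n}$, which is $1$-dimensional, so the invariants are $0$ or all of $V$. To rule out the latter, I would compare $V$ with the opposite-unipotent invariant subspace $V^- := H^{2n-1}_c(\widetilde{Y}'_{2n}, \mathbb{F})^{U^-(\bF_q)^n}$, noting that $V \cap V^- = H^{2n-1}_c(\widetilde{Y}'_{2n}, \mathbb{F})^{\mathit{SL}_2(\bF_q)^n}$ since $\mathit{SL}_2$ is generated by $U$ and its opposite $U^-$. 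I would attempt to prove $V \neq V^-$ by comparing the explicit generators: $V$ comes from the class of $\{0\}$ in $H^1_c(\mathbb{A}^n \setminus \{0\}, \mathbb{F})$ pulled back through $\widetilde{Y}'_{2n} \to Q \to \mathbb{A}^n \setminus \{0\}$, while $V^-$ comes from the analogous construction with $Q^- \simeq \{\sum x_i u_i = -1\}$ arising as $\widetilde{Y}'_{2n}/U^-(\bF_q)^n$. The nontrivial technical point is showing these classes span distinct lines in $H^{2n-1}_c(\widetilde{Y}'_{2n}, \mathbb{F})$ (where the characteristic-zero version follows from standard Weil representation theory); this may need a direct local computation at the missing strata or a Frobenius-weight argument, since integral transversality of the two lifted classes does not automatically survive reduction mod $\fm$.

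For part \ref{en:invY}, the $\mu_{q+1}$-torsor $\widetilde{Y}'_{2n} \to Y'_{2n}$ between smooth affine varieties of dimension $2n-1$ and Lemma \ref{affine2} \ref{en:Gmid} yield $H^{2n-1}_c(Y'_{2n}, \mathbb{F}) \simeq H^{2n-1}_c(\widetilde{Y}'_{2n}, \mathbb{F})^{\mu_{q+1}}$. Since $\mu_{q+1}$ commutes with $\mathit{Sp}_{2n}(\bF_q)$, the $\mathit{Sp}_{2n}(\bF_q)$-invariants on $H^{2n-1}_c(Y'_{2n}, \mathbb{F})$ embed into $H^{2n-1}_c(\widetilde{Y}'_{2n}, \mathbb{F})^{\mathit{SL}_2(\bF_q)^n} = 0$. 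For the $U(\bF_q)^n$-invariants, the induced $\mu_{q+1}$-action on $Q$ is $(s_i, t_i) \mapsto (\zeta^{-1} s_i, \zeta t_i)$, which fixes $\{0\} \in \mathbb{A}^n$ pointwise and hence acts trivially on $V \simeq H^1_c(\mathbb{A}^n \setminus \{0\}, \mathbb{F})$; so the generator of $V$ lies in $H^{2n-1}_c(Y'_{2n}, \mathbb{F})^{U(\bF_q)^n}$, and since this invariant space is contained in $V$, it equals $V$.
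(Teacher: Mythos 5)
Your computation of the $U(\bF_q)^n$-invariants is correct and a clean alternative to the paper's route: since $\lvert U(\bF_q)^n\rvert=q^n$ is always invertible in $\mathbb{F}$, you may pass to the quotient $Q=\{\sum s_it_i=1\}$, recognize $Q\to\mathbb{A}^n\setminus\{0\}$ as a Zariski-locally trivial $\mathbb{A}^{n-1}$-bundle, and conclude $H_{\mathrm{c}}^{2n-1}(Q,\mathbb{F})\simeq H_{\mathrm{c}}^1(\mathbb{A}^n\setminus\{0\},\mathbb{F})\simeq\mathbb{F}$. The $\mu_{q+1}$-equivariance argument for the second assertion of part \ref{en:invY} is likewise correct, since the weight-$(\zeta^{-1},\zeta)$ action on $Q$ projects to an action on $\mathbb{A}^n$ fixing the origin.

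However, there is a genuine gap in the first assertion of part \ref{en:invtY}, and you acknowledge it yourself. The general principle $H_{\mathrm{c}}^i(X/G,\mathbb{F})\simeq H_{\mathrm{c}}^i(X,\mathbb{F})^G$ needs $\lvert G\rvert$ invertible in $\mathbb{F}$, so the identification $H_{\mathrm{c}}^{2n-1}(\widetilde{Y}'_{2n,\overline{\mathbb{F}}_q},\mathbb{F})^{\mathit{SL}_2(\bF_q)^n}\simeq H_{\mathrm{c}}^{2n-1}(\mathbb{A}^{2n-1},\mathbb{F})=0$ only works when $\ell\nmid q^2-1$. But the case $\ell\mid q+1$ is precisely the one that matters throughout the paper (Propositions \ref{sum} and \ref{pl} rely on Proposition \ref{sol} \ref{en:invY} exactly in that case). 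Your proposed repair --- comparing the one-dimensional subspaces of $U(\bF_q)^n$- and $U^-(\bF_q)^n$-invariants and showing they span distinct lines --- is a plausible strategy, but you leave the decisive step ("showing these classes span distinct lines...may need a direct local computation...or a Frobenius-weight argument") unproven, and as you note, integral transversality need not survive reduction mod $\fm$. Since the first assertion of part \ref{en:invY} is deduced from the first assertion of part \ref{en:invtY}, that claim inherits the same gap.

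The paper avoids this divisibility issue entirely by a different mechanism: an induction on $n$ together with the stratification $\widetilde{Y}'_{2n,\overline{\mathbb{F}}_q}=R_{2n}\sqcup Q_{2n}$ by the divisor $y_n=0$. On the open piece $Q_{2n}$ the last $U(\bF_q)$-factor acts freely with quotient $\mathbb{A}^{2n-2}\times\bG_{\mathrm{m}}$, whose relevant cohomology vanishes for $n\ge2$, so the $U(\bF_q)$-invariants in degree $2n-1$ come entirely from the closed piece $R_{2n}\simeq\mathbb{A}^1\times\widetilde{Y}'_{2n-2,\overline{\mathbb{F}}_q}$; this produces an $\mathit{SL}_2(\bF_q)^{n-1}$-equivariant isomorphism to $H_{\mathrm{c}}^{2n-3}(\widetilde{Y}'_{2n-2,\overline{\mathbb{F}}_q},\mathbb{F})$ and reduces everything to the base case $n=1$, where the $\mathit{SL}_2(\bF_q)$-action on $\widetilde{Y}'_2$ is free and Lemma \ref{affine2} \ref{en:Gmid} applies in middle degree with no constraint on $\ell$. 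That is the idea you would need to supply, or else you would have to actually carry out the transversality computation you sketched.
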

\begin{proof}
We show the claim \ref{en:invtY} 
by induction on $n$. 
The action of $\SL_2(\bF_q)$ on $\widetilde{Y}'_2$ is 
free by \cite[Proposition 2.1.2]{BonRepSL2}. Hence, the claim 
for $n=1$ follows from 
Lemma \ref{affine2} \ref{en:Gmid}, \eqref{dickson2} and 
$H_{\mathrm{c}}^1(\bG_{\mathrm{m}},\bF) \simeq \bF$.

Assume $n \geq 2$. 
We consider the closed subscheme $R_{2n}$ of $\widetilde{Y}'_{2n,\overline{\mathbb{F}}_q}$ defined by $y_n=0$. 
This is isomorphic to $\mathbb{A}^1 \times \widetilde{Y}'_{2n-2,\overline{\mathbb{F}}_q}$. 
Let $Q_{2n}=\widetilde{Y}'_{2n,\overline{\mathbb{F}}_q} \setminus 
R_{2n}$. 
Similarly to \eqref{dickson2}, 
the quotient $Q_{2n}/U(\bF_q)$ is isomorphic to 
$\mathbb{A}^{2n-2} \times \bG_{\mathrm{m}}$. 
Therefore,  
$H_{\mathrm{c}}^{i}(Q_{2n},\mathbb{F})^{U(\bF_q)}$ is 
zero for $i=2n-1, 2n$ by $n \geq 2$.
Hence, we have isomorphisms
\[
H_{\mathrm{c}}^{2n-1}(\widetilde{Y}'_{2n,\overline{\mathbb{F}}_q},\mathbb{F})^{U(\bF_q)}
\xrightarrow{\sim} H_{\mathrm{c}}^{2n-1}
(R_{2n},\mathbb{F})^{U(\bF_q)} \simeq 
H_{\mathrm{c}}^{2n-3}(\widetilde{Y}'_{2n-2,\overline{\mathbb{F}}_q},\mathbb{F}), 
\]
which are compatible with the actions of $\SL_2(\bF_q)^{n-1}$. 
Hence, the claim follows from the induction hypothesis. 

We show the claim \ref{en:invY}. 
By applying Lemma \ref{affine2} \ref{en:Gmid} 
to the $\mu_{q+1}$-torsor \eqref{torsor2}, we have 
\begin{equation}\label{nzero}
H_{\mathrm{c}}^{2n-1}(Y'_{2n,\overline{\mathbb{F}}_q},\mathbb{F}) \simeq 
H_{\mathrm{c}}^{2n-1}(\widetilde{Y}'_{2n,\overline{\mathbb{F}}_q},\mathbb{F})^{\mu_{q+1}}  \quad \textrm{for any $n \geq 1$}. 
\end{equation}
We have 
$H_{\mathrm{c}}^{2n-1}(\widetilde{Y}'_{2n,\overline{\mathbb{F}}_q},\mathbb{F})^{\Sp_{2n}(\bF_q)}=0$. By \eqref{nzero}, we have the inclusion 
$H_{\mathrm{c}}^{2n-1}(Y'_{2n,\overline{\mathbb{F}}_q},\mathbb{F}) \subset H_{\mathrm{c}}^{2n-1}(\widetilde{Y}'_{2n,\overline{\mathbb{F}}_q},\mathbb{F})$. 
Hence, we have 
$H_{\mathrm{c}}^{2n-1}(Y'_{2n,\overline{\mathbb{F}}_q},\mathbb{F})^{\Sp_{2n}(\bF_q)}=0$. 

The action of $\mu_{q+1}$ on $\widetilde{Y}'_{2n}$ commutes with the one of $U(\bF_q)^n$. 
By the above proof, we have an isomorphism 
\[
H_{\mathrm{c}}^{2n-1}(\widetilde{Y}'_{2n,\overline{\mathbb{F}}_q},\mathbb{F})^{U(\bF_q)^n} \simeq 
H_{\mathrm{c}}^1(\bG_{\mathrm{m}},\mathbb{F})
\]
as $\mathbb{F}[\mu_{q+1}]$-modules, where 
$\mu_{q+1}$ acts on $\bG_{\mathrm{m}}$ by the usual 
multiplication. 
Hence $\mu_{q+1}$ acts on $H_{\mathrm{c}}^{2n-1}(\widetilde{Y}'_{2n,\overline{\mathbb{F}}_q},\mathbb{F})^{U(\bF_q)^n}$ trivially. 
Therefore, we have 
\begin{equation*}
H_{\mathrm{c}}^{2n-1}(Y'_{2n,\overline{\mathbb{F}}_q},\mathbb{F})^{U(\bF_q)^n} \simeq 
H_{\mathrm{c}}^{2n-1}(\widetilde{Y}'_{2n,\overline{\mathbb{F}}_q},\mathbb{F})^{U(\bF_q)^n \times \mu_{q+1}} \simeq \mathbb{F} 
\end{equation*}
by \eqref{nzero}. 
\end{proof}

\subsection{Trace computations}\label{try0}
In this subsection, we assume $p \neq 2$. 
An aim in this subsection is to show Proposition \ref{433}, 
which implies that the Brauer characters associated to  
$W_{n,\psi}[\nu]^+$ and $W_{n,\psi}[\nu]^-$ are distinct in the case where $\ell \neq 2$ (\cf Proposition \ref{prop:nc1}). 

Let $\bigl(\frac{a}{\mathbb{F}_q}\bigr)=a^{\frac{q-1}{2}}$ 
for $a \in \mathbb{F}_q^{\times}$. 
For $\psi \in \Hom (\bF_q,\Lambda^{\times}) \setminus \{ 1 \}$, 
we consider the quadratic Gauss sum  
\[
 G(\psi)
 =\sum_{x \in \mathbb{F}_q^{\times}} 
 \left(\frac{x}{\mathbb{F}_q}\right) \psi(x) \in \Lambda. 
\]
As a well-known fact, we have 
$G(\psi)^2=\bigl(\frac{-1}{\mathbb{F}_q}\bigr)q$. 
In particular, we have $G(\psi) \neq 0$.

Let $X$ be the affine smooth surface defined by 
$z^q-z=x y^q-x^qy$ in $\mathbb{A}_{\mathbb{F}_q}^3$. 
We consider the projective smooth surface  
$\overline{X}$ defined by 
\[
Z_2^q Z_3-Z_2 Z_3^q=Z_0 Z_1^q-Z_0^q Z_1
\]
in $\mathbb{P}_{\mathbb{F}_q}^3
=\Proj \mathbb{F}_q[Z_0,Z_1,Z_2,Z_3]$. 
We regard $X$ as an open subscheme of 
$\overline{X}$
by $(x,y,z) \mapsto [x:y:z:1]$. 
Let $D=\overline{X} \setminus X$. 
Let 
\begin{equation}\label{uu}
u=\begin{pmatrix}
1 & 1 \\
0 & 1
\end{pmatrix} \in \SL_2(\bF_q), 
\end{equation}
which is of order $p$. 
Let $F$ denote the Frobenius endomorphism of $X$
over $\mathbb{F}_q$. 
Let $\eta \in \mathbb{F}_q$ and $\zeta \in \mu_{q+1}$. 
Let $f_{\eta,\zeta}$ denote the endomorphism 
$F \eta \zeta u$ of $X_{\overline{\mathbb{F}}_q}$. 
This endomorphism extends to the one of 
$\overline{X}_{\overline{\mathbb{F}}_q}$ given by 
\begin{align*}
f_{\eta,\zeta}  \colon \overline{X}_{\overline{\mathbb{F}}_q} \to \overline{X}_{\overline{\mathbb{F}}_q};\ 
 [Z_0:Z_1:Z_2:Z_3] \mapsto 
\left[(Z_0+Z_1)^q: Z_1^q: \zeta (Z_2+\eta Z_3)^q: \zeta Z_3^q\right]. 
\end{align*}
This endomorphism $f_{\eta,\zeta}$ stabilizes $D_{\overline{\mathbb{F}}_q}$. 

\begin{lem}\label{try3}
We have 
\[
\Tr (f_{\eta,\zeta};H^\ast (\overline{X}_{\overline{\mathbb{F}}_q},\overline{\mathbb{Q}}_{\ell}))=
\begin{cases}
q^2+q+1 & \textrm{if $\eta=0$}, \\
2q^2+q+1 & \textrm{if $\eta \neq 0$ and $\nu(\zeta) \bigl(\frac{-\eta}{\mathbb{F}_q}\bigr)=1$}, \\
q+1 & \textrm{if $\eta \neq 0$ and $\nu(\zeta) \bigl(\frac{-\eta}{\mathbb{F}_q}\bigr)=-1$}. 
\end{cases}
\]
\end{lem}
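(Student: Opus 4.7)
The plan is to apply the Lefschetz trace formula. Since $f_{\eta,\zeta} = F \circ g$ where $F$ is the geometric Frobenius and $g = \eta \circ \zeta \circ u$ is an automorphism of $\overline{X}_{\overline{\mathbb{F}}_q}$, the differential $df_{\eta,\zeta}$ vanishes in characteristic $p$; hence every fixed point is simple and
\[
\Tr(f_{\eta,\zeta}; H^*(\overline{X}_{\overline{\mathbb{F}}_q}, \overline{\mathbb{Q}}_\ell)) = \#\{P \in \overline{X}(\overline{\mathbb{F}}_q) : f_{\eta,\zeta}(P) = P\}.
\]
I will compute the fixed points on the boundary $D = \overline{X} \setminus X$ and on the affine chart $X$ separately.

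For the boundary, $D \subset \{Z_3 = 0\}$ is cut out there by $Z_0 Z_1^q - Z_0^q Z_1 = 0$, so it is the union of $q + 1$ lines through $[0:0:1:0]$: the line $L_\infty = \{Z_1 = 0\}$ and the lines $L_\alpha = \{Z_0 = \alpha Z_1\}$ for $\alpha \in \mathbb{F}_q$. The explicit formula shows that $f_{\eta,\zeta}$ cyclically permutes the $L_\alpha$ via $\alpha \mapsto \alpha + 1$ and fixes $L_\infty$ setwise; so fixed points off $[0:0:1:0]$ must lie on $L_\infty$. On $L_\infty$ the map is $[Z_0 : Z_2] \mapsto [Z_0^q : \zeta Z_2^q]$, and using that $w \mapsto w^{q-1}$ is surjective on $\overline{\mathbb{F}}_q^\times$, I find exactly $q + 1$ fixed points there (the two coordinate points $[1:0]$, $[0:1]$ together with the $q - 1$ solutions of $w^{q-1} = \zeta$). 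Thus $D$ contributes $q + 1$ fixed points, independent of $(\eta,\zeta)$.

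On the affine chart, the fixed-point system reads $\zeta y = y^q$, $\zeta x = (x + y)^q$, $z^q - z = -\eta$, together with the surface constraint $xy^q - x^q y = -\eta$. If $y = 0$, the constraint forces $\eta = 0$, and then $\zeta x = x^q$ gives $q$ choices of $x$ while $z \in \mathbb{F}_q$ gives $q$ choices of $z$, for $q^2$ fixed points. Otherwise $\zeta = y^{q-1}$ and the surface constraint yields $\zeta y^2 = -\eta$, so this case requires $\eta \neq 0$ and pins $y$ down to the common solutions of $y^{q-1} = \zeta$ and $y^{q+1} = -\eta$ in $\mathbb{F}_{q^2}^\times$. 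For each such $y$, the equation $x^q - \zeta x = -\zeta y$ has $q$ solutions (the additive map $s \mapsto s^q - \zeta s$ is surjective of degree $q$), and Artin--Schreier gives $q$ values of $z$.

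The main technical step is the norm-compatibility count of such $y$. Fixing any $y_0$ with $y_0^{q-1} = \zeta$, the solutions of the first equation are exactly $c y_0$ with $c \in \mathbb{F}_q^\times$; for such $y$, the second condition $y^{q+1} = -\eta$ becomes $c^2 = -\eta / y_0^{q+1}$, which has two solutions in $\mathbb{F}_q^\times$ iff $\bigl(\tfrac{-\eta}{\mathbb{F}_q}\bigr) = \bigl(\tfrac{y_0^{q+1}}{\mathbb{F}_q}\bigr)$ and none otherwise. Writing $\zeta = g^{m(q-1)}$ for a generator $g$ of $\mathbb{F}_{q^2}^\times$, a short computation gives $y_0^{(q^2-1)/2} = (-1)^m = \nu(\zeta)$, so the compatibility reads $\nu(\zeta) \bigl(\tfrac{-\eta}{\mathbb{F}_q}\bigr) = 1$. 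Consequently the affine contribution is $q^2$, $2 q^2$, or $0$ in the three cases, and adding the uniform boundary count $q + 1$ yields the three formulas in the statement.
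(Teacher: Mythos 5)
Your proof is correct and follows the same strategy as the paper: apply the Grothendieck–Lefschetz trace formula, count fixed points on the boundary $D$ and on the affine part separately, and reduce the affine count to a quadratic residue condition linking $\nu(\zeta)$ and $\bigl(\frac{-\eta}{\mathbb{F}_q}\bigr)$. Your version is somewhat more explicit than the paper's — you justify the multiplicity-one claim by observing that $df_{\eta,\zeta}=0$ since $f_{\eta,\zeta}$ is a Frobenius twist, whereas the paper merely says "one can check," and you work out the boundary line configuration and the norm-compatibility computation in detail — but these are elaborations, not a different method.
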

\begin{proof}
By the Grothendieck--Lefschetz trace formula, 
it suffices to count the number of fixed points of $f_{\eta,\zeta}$
on $\overline{X}_{\overline{\mathbb{F}}_q}$
with multiplicity. 
The set of fixed points of $f_{\eta,\zeta}$ equals 
the union of the two sets 
\begin{align*}
\Sigma_1&= \{[x:y:z:1] \in \mathbb{P}^3 \mid x^q-\zeta x=-\zeta y ,\ 
 y^q=\zeta y,\ y^{q+1}=-\eta,\ 
z^q-z=-\eta \}, \\ 
\Sigma_2&=\{[0:z:1:0] \in \mathbb{P}^3 \mid z^q=\zeta z\} \cup
\{[0:1:0:0]\}.   
\end{align*}
Assume that $\eta \neq 0$ and $\nu(\zeta)\bigl(\frac{-\eta}{\mathbb{F}_q}\bigr)=1$. 
We have 
\[
 \Sigma_1=\{[x:y:z:1] \in \mathbb{P}^3 \mid x^q-\zeta x=-\zeta y,\ 
 y^2=-\eta/\zeta,\ 
 z^q-z=-\eta \}
\]
and $\lvert \Sigma_1 \rvert=2q^2$. 
One can check that the multiplicity of $f_{\eta,\zeta}$
at any point of $\Sigma_1 \cup \Sigma_2$ 
equals one.  
Hence the claim in this case follows. 
Assume that $\eta \neq 0$ and $\nu(\zeta)\bigl(\frac{-\eta}{\mathbb{F}_q}\bigr)=-1$. Then we have 
$\lvert \Sigma_1 \rvert=0$. 
The claim is shown in the same way as above. 
The other case is computed similarly. 
\end{proof}
We simply write $f_{\zeta}$ for $f_{0,\zeta}$.  
Let $\psi \in \mathbb{F}_q^{\vee} \setminus \{1\}$. 
we simply write $\mathscr{L}^0_{\psi}$ for 
the pullback of $\mathscr{L}_{\psi}$ under 
$\bA^2 \to \bA^1 ; \ (x,y) \mapsto xy^q-x^q y$. 

\begin{cor}\label{try3-5}
We have 
\[
\frac{1}{q+1}
\sum_{\zeta \in \mu_{q+1}} \nu(\zeta) \Tr(f_{\zeta}; 
H_{\mathrm{c}}^2(\mathbb{A}^2,\mathscr{L}^0_{\psi}(1)))=G(\psi).
\]
\end{cor}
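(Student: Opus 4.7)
The plan is to use Fourier inversion over the $\bF_q$-action by translations $z \mapsto z+\eta$ on $X$ to extract the desired trace from Lemma \ref{try3}. First I would extend this translation to $\overline{X}$ by
\[
\tau_\eta \colon [Z_0:Z_1:Z_2:Z_3] \mapsto [Z_0:Z_1:Z_2+\eta Z_3:Z_3],
\]
which preserves the defining equation for $\eta \in \bF_q$ and fixes $D = \overline{X} \cap \{Z_3=0\}$ pointwise. Hence $\bF_q$ acts trivially on $H^*(D_{\ol{\bF}_q},\ol{\bQ}_\ell)$, and the excision long exact sequence for $(\overline{X}, D)$ yields $H^*(\overline{X}_{\ol{\bF}_q},\ol{\bQ}_\ell)[\psi] \simeq H^*_{\mathrm{c}}(X_{\ol{\bF}_q},\ol{\bQ}_\ell)[\psi]$ for every nontrivial $\psi$. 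Since $X \to \bA^2$ is a finite \'etale $\bF_q$-torsor, the $\psi$-isotypic decomposition identifies $H^*_{\mathrm{c}}(X_{\ol{\bF}_q},\ol{\bQ}_\ell)[\psi]$ with $H^*_{\mathrm{c}}(\bA^2,\mathscr{L}^0_\psi)$; the latter is concentrated in degree $2$ for $\psi \neq 1$, which together with \eqref{eq:X'Lpsi} provides an $f_\zeta$-equivariant identification with the $H^2_{\mathrm{c}}$ appearing in the statement.

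Next, one has $f_{\eta,\zeta} = \tau_\eta \circ f_\zeta$ on $\overline{X}$, the two factors commute (as $\tau_\eta$ commutes with $F$ for $\eta \in \bF_q$ and trivially with $\zeta u$), and $\tau_\eta$ acts on the $\psi'$-isotypic part as the scalar $\psi'(\eta)$. Hence
\[
\Tr(f_{\eta,\zeta}; H^*(\overline{X}_{\ol{\bF}_q},\ol{\bQ}_\ell)) = \sum_{\psi' \in \Hom(\bF_q, \ol{\bQ}_\ell^\times)} \psi'(\eta)\, \Tr(f_\zeta; H^*(\overline{X})[\psi']),
\]
and Fourier inversion combined with the identification of the preceding paragraph gives, for $\psi \neq 1$,
\[
\Tr(f_\zeta; H^2_{\mathrm{c}}(\bA^2,\mathscr{L}^0_\psi)) = \frac{1}{q}\sum_{\eta \in \bF_q} \psi(\eta)^{-1}\, \Tr(f_{\eta,\zeta}; H^*(\overline{X})).
\]
Substituting the three cases from Lemma \ref{try3} and using $\sum_\eta \psi(\eta)^{-1} = 0$ to eliminate the uniformly appearing summand $q+1$, this reduces to
\[
q + 2q \sum_{\substack{\eta \in \bF_q^\times \\ \bigl(\frac{-\eta}{\bF_q}\bigr) = \nu(\zeta)}} \psi(\eta)^{-1}.
\]
Writing the indicator as $\tfrac{1}{2}\bigl(1 + \nu(\zeta)\bigl(\frac{-\eta}{\bF_q}\bigr)\bigr)$ and substituting $\eta \mapsto -\eta$ evaluates this character sum to $q\,\nu(\zeta)\,G(\psi)$.

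Finally, the Tate twist $(1)$ divides by $q$, giving $\Tr(f_\zeta; H^2_{\mathrm{c}}(\bA^2,\mathscr{L}^0_\psi(1))) = \nu(\zeta) G(\psi)$. Multiplying by $\nu(\zeta)$, summing over $\zeta \in \mu_{q+1}$ (using $\nu(\zeta)^2 = 1$), and dividing by $q+1$ yields the claim. The main obstacle is the Gauss-sum bookkeeping in the second paragraph: correctly tracking the sign conventions (in particular, passing between the $\psi^{-1}$ produced by Fourier inversion and the $\psi$ appearing in the definition of $G(\psi)$) and the factor of $\nu(\zeta)$ coming from the case distinction of Lemma \ref{try3}.
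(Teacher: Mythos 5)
Your proposal is correct and follows essentially the same route as the paper: pass to the $\psi$-isotypic part of $H^*(\overline{X})$ via the trivial $\bF_q$-action on $D$, Fourier-invert over the translation action to reduce to the point counts of Lemma~\ref{try3}, and evaluate the resulting Gauss sum. The only difference is organizational --- you isolate the cleaner intermediate identity $\Tr(f_\zeta;H_{\mathrm{c}}^2(\bA^2,\mathscr{L}^0_\psi(1)))=\nu(\zeta)G(\psi)$ for each $\zeta$ before summing, whereas the paper manipulates the double sum over $\zeta$ and $\eta$ directly.
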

\begin{proof}
For any $i$, we have isomorphisms 
\[
H_{\mathrm{c}}^i(\mathbb{A}^2,\mathscr{L}^0_{\psi}(1))
\simeq H_{\mathrm{c}}^i(X_{\overline{\mathbb{F}}_q},\overline{\mathbb{Q}}_{\ell}(1))
[\psi]
\xrightarrow{\sim} 
H_{\mathrm{c}}^i(\overline{X}_{\overline{\mathbb{F}}_q},\overline{\mathbb{Q}}_{\ell}(1))[\psi], 
\] 
where the second isomorphism follows, 
since the group $\mathbb{F}_q$ acts trivially on $D$. 
By the K\"{u}nneth formula and \cite[Lemma 3.3]{ITGeomHW}, 
we have 
\begin{align*}
\sum_{\zeta \in \mu_{q+1}} \nu(\zeta) 
\Tr(f_{\zeta}; H_{\mathrm{c}}^2(\mathbb{A}^2,\mathscr{L}^0_{\psi}(1))) 
&=
\sum_{\zeta \in \mu_{q+1}} \nu(\zeta) \Tr(f_{\zeta}; 
H_{\mathrm{c}}^\ast(\mathbb{A}^2,\mathscr{L}^0_{\psi}(1))) \\
&=
\sum_{\zeta \in \mu_{q+1}} \nu(\zeta) \Tr(f_{\zeta}; 
H^\ast(\overline{X}_{\overline{\mathbb{F}}_q},\overline{\mathbb{Q}}_{\ell}(1))
[\psi])\\
&=\frac{1}{q}\sum_{\zeta \in \mu_{q+1}}
\nu(\zeta) \sum_{\eta \in \mathbb{F}_q} \psi^{-1}(\eta)
\Tr(f_{\eta,\zeta}; H^\ast (\overline{X}_{\overline{\mathbb{F}}_q},\overline{\mathbb{Q}}_{\ell}(1))). 
\end{align*}
The last term equals 
\begin{align*}
& \frac{1}{q^2}\sum_{\zeta \in \mu_{\frac{q+1}{2}}} \left((2q^2+q+1) 
\sum_{\eta \in (\mathbb{F}_q^{\times})^2} \psi (\eta)
+(q+1)\sum_{\eta \notin (\mathbb{F}_q^{\times})^2} \psi (\eta)\right) \\
& -\frac{1}{q^2}
\sum_{\zeta \notin \mu_{\frac{q+1}{2}}} \left((2q^2+q+1) 
\sum_{\eta \notin (\mathbb{F}_q^{\times})^2} \psi (\eta)
+(q+1)\sum_{\eta \in (\mathbb{F}_q^{\times})^2} \psi (\eta)\right)=(q+1) G(\psi) 
\end{align*}
by Lemma \ref{try3} and $\sum_{\zeta \in \mu_{q+1}}\nu(\zeta)=0$. 
\end{proof}
\begin{lem}\label{try4}
We have 
\[
\Tr(F\eta \zeta; H^{\ast}(\overline{X}_{\overline{\mathbb{F}}_q},\overline{\mathbb{Q}}_{\ell}))
=\begin{cases}
(q+1)(q^2+1) & \textrm{if $\eta=0$}, \\
q^2+q+1 & \textrm{if $\eta \neq 0$}. 
\end{cases} 
\]
\end{lem}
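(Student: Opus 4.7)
My plan is to mimic the proof of Lemma \ref{try3}: apply the Grothendieck--Lefschetz trace formula to the endomorphism $F\eta\zeta$ (which is $f_{\eta,\zeta}$ with the unipotent factor $u$ removed) and count fixed points on $\overline{X}_{\overline{\mathbb{F}}_q}$ with their local multiplicities.

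First, I would extend $F\eta\zeta$ to an endomorphism of $\overline{X}$ by the formula
\[
[Z_0:Z_1:Z_2:Z_3] \mapsto \bigl[Z_0^q:Z_1^q:\zeta(Z_2+\eta Z_3)^q:\zeta Z_3^q\bigr],
\]
obtained from the expression for $f_{\eta,\zeta}$ by setting $u=1$. This stabilises $D=\overline{X}\setminus X$, and the restriction to the affine chart $X$ agrees with the composition of Frobenius, translation by $\eta$ in $z$, and scalar multiplication by $\zeta\in\mu_{q+1}$ on $(x,y)$.

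Next I would split the fixed-point set according to whether $Z_3\neq 0$ (the affine part $X$) or $Z_3=0$ (the divisor $D$). On $X$, writing coordinates $(x,y,z)$, the fixed-point equations are $x^q=\zeta x$, $y^q=\zeta y$, $(z+\eta)^q=z$, subject to $z^q-z=xy^q-x^q y$. The first two equations force $xy^q-x^q y=0$, while the third gives $z^q-z=-\eta$. If $\eta\neq 0$ these two conditions are incompatible, so there are no affine fixed points. If $\eta=0$ each of the three equations is a separable polynomial of degree $q$, giving $q^3$ fixed points on $X$. On $D=\{Z_3=0\}\cap\overline{X}$, the defining equation becomes $Z_0 Z_1^q=Z_0^q Z_1$, and a short case analysis (first $Z_0=0$, with subcases according to $Z_1$ and $Z_2$; then $Z_0\neq 0$, normalised to $Z_0=1$) yields $q^2+q+1$ fixed points on $D$, independent of $\eta$ and $\zeta$.

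Combining, the total number of fixed points is $q^3+q^2+q+1=(q+1)(q^2+1)$ when $\eta=0$ and $q^2+q+1$ when $\eta\neq 0$. Since the derivative of Frobenius vanishes, at every fixed point the differential of $F\eta\zeta-\mathrm{id}$ equals $-\mathrm{id}$, so each fixed point contributes with local Lefschetz number $1$; hence Grothendieck--Lefschetz applied to the proper smooth surface $\overline{X}_{\overline{\mathbb{F}}_q}$ yields the stated trace formula.

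The only delicate point I anticipate is the bookkeeping on $D$: one must check that the four strata $\{Z_0=0,Z_1=1\}$, $\{Z_0=Z_1=0,Z_2=1\}$ and $\{Z_0=1\}$ (intersected with the fixed-point locus) are disjoint and together exhaust $D^{F\eta\zeta}$. Once the counting at infinity is confirmed, the rest is a straightforward application of Grothendieck--Lefschetz together with the observation that on $D$ the endomorphism is independent of $\eta$, which is why the divisor contribution $q^2+q+1$ is common to both cases.
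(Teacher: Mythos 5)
Your proof is correct and follows essentially the same route as the paper: extend $F\eta\zeta$ to the smooth proper surface $\overline{X}$ (the formula you write is exactly the one obtained by dropping $u$ from $f_{\eta,\zeta}$), count fixed points by Grothendieck--Lefschetz, and observe that each has multiplicity one because the differential of $F\eta\zeta$ vanishes. The only cosmetic difference is that the paper stratifies the boundary $D$ by the vanishing of $Z_2$ (its sets $\Sigma_2,\Sigma_3$), whereas you stratify by the vanishing of $Z_0$; both yield $q^2+q+1$ boundary fixed points.
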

\begin{proof}
The set of fixed points of $F\eta \zeta$ on $\overline{X}_{\overline{\mathbb{F}}_q}$ is 
the union of the three finite sets
\begin{align*}
 \Sigma_1&=\{[x:y:z:1]  \in \mathbb{P}^3\mid z^q-z=xy^q-x^qy=-\eta,\ x^q=\zeta x,\ y^q=\zeta y\}, \\
 \Sigma_2&=\{[x:y:1:0] \in \mathbb{P}^3 \mid x^q=\zeta x,\ y^q=\zeta y\}, \\  
 \Sigma_3&=\{[Z_0:Z_1:0:0]  \in \mathbb{P}^3\mid [Z_0:Z_1] \in \mathbb{P}^1_{\bF_q}(\mathbb{F}_q)\}. 
\end{align*}
We have $\Sigma_1=\emptyset$ if $\eta \neq 0$ and 
$\lvert \Sigma_1 \rvert =q^3$ if $\eta=0$. 
The multiplicity of $F\eta \zeta$ at any point of 
$\bigcup_{i=1}^3 \Sigma_i$ equals one. Hence the claim follows. 
\end{proof}
\begin{cor}\label{try5}
Let $\psi \in \mathbb{F}_q^{\vee} \setminus \{1\}$. 
We have 
$\Tr(F\zeta; H_{\mathrm{c}}^2(\mathbb{A}^2,\mathscr{L}^0_{\psi}(1)))=q$. 
\end{cor}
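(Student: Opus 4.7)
The plan is to follow the same template as the proof of Corollary \ref{try3-5}, but without the $\nu(\zeta)$ twist, so the result becomes a statement for each individual $\zeta \in \mu_{q+1}$. The starting point is the vanishing
\[
H_{\mathrm{c}}^i(\mathbb{A}^2,\mathscr{L}^0_{\psi})=0 \quad \textrm{for $i \neq 2$},
\]
which I would justify by the affine vanishing theorem together with Poincar\'e duality (since $\mathbb{A}^2$ is affine smooth of dimension $2$ and $\mathscr{L}^0_{\psi}$ is smooth, both $H_{\mathrm{c}}^1$ and $H_{\mathrm{c}}^i$ for $i > 2$ vanish). Consequently $\Tr(F\zeta;H_{\mathrm{c}}^2(\mathbb{A}^2,\mathscr{L}^0_{\psi}(1)))$ equals the alternating sum of traces, and then via the isomorphism chain used in the proof of Corollary \ref{try3-5},
\[
H_{\mathrm{c}}^i(\mathbb{A}^2,\mathscr{L}^0_{\psi}(1)) \simeq H_{\mathrm{c}}^i(X_{\overline{\mathbb{F}}_q},\overline{\mathbb{Q}}_{\ell}(1))[\psi] \simeq H^i(\overline{X}_{\overline{\mathbb{F}}_q},\overline{\mathbb{Q}}_{\ell}(1))[\psi],
\]
this trace is converted to an alternating sum of traces on $H^{\ast}(\overline{X}_{\overline{\mathbb{F}}_q},\overline{\mathbb{Q}}_{\ell}(1))[\psi]$.

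Next, I would apply the $\mathbb{F}_q$-Fourier decomposition for the $\psi$-isotypic part exactly as in Corollary \ref{try3-5}, combined with the $q^{-1}$ factor coming from the Tate twist acting on Frobenius, to obtain
\[
\Tr(F\zeta;H_{\mathrm{c}}^2(\mathbb{A}^2,\mathscr{L}^0_{\psi}(1)))
=\frac{1}{q^2}\sum_{\eta \in \mathbb{F}_q}\psi^{-1}(\eta)\,\Tr(F\eta\zeta;H^{\ast}(\overline{X}_{\overline{\mathbb{F}}_q},\overline{\mathbb{Q}}_{\ell})).
\]

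Finally, substituting the values from Lemma \ref{try4} and using that $\sum_{\eta \in \mathbb{F}_q^{\times}}\psi^{-1}(\eta)=-1$ since $\psi \neq 1$, the right-hand side becomes
\[
\frac{1}{q^2}\Bigl[(q+1)(q^2+1)-(q^2+q+1)\Bigr]=\frac{q^3}{q^2}=q,
\]
which is the claim. There is no real obstacle here: the calculation is purely mechanical once the template of Corollary \ref{try3-5} is in place. The only point requiring a little care is keeping track of the Tate twist factor $q^{-1}$ and noting that the values in Lemma \ref{try4} are independent of $\zeta$, which is why the final answer does not depend on $\zeta$.
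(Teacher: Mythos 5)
Your proposal is correct and follows essentially the same route as the paper: replace the degree-$2$ trace by the alternating sum, pass to $H^{\ast}(\overline{X})[\psi]$, extract the $\psi$-isotypic part with the factor $1/q$ and the Tate twist with another $1/q$, and plug in Lemma \ref{try4}; the arithmetic $(q+1)(q^2+1)-(q^2+q+1)=q^3$ gives $q$. The only small imprecision is the justification of $H_{\mathrm{c}}^i(\mathbb{A}^2,\mathscr{L}^0_{\psi})=0$ for $i>2$, which does not follow directly from Poincar\'e duality alone (one needs that $\overline{X}$ is a smooth hypersurface in $\mathbb{P}^3$, hence has no odd cohomology, together with the triviality of the $\mathbb{F}_q$-action on $H^0$ and $H^4$), but this is a technicality the paper also glosses over.
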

\begin{proof}
Similarly to the proof of Corollary \ref{try3-5}, we have 
\begin{align*}
\Tr(F\zeta; H_{\mathrm{c}}^2(\mathbb{A}^2,\mathscr{L}^0_{\psi}(1)))
&=\Tr(F\zeta; H_{\mathrm{c}}^\ast(\mathbb{A}^2,\mathscr{L}^0_{\psi}(1))) \\
&=\frac{1}{q^2} \sum_{\eta \in \mathbb{F}_q} \psi^{-1}(\eta)
\Tr(F\eta \zeta; H^{\ast}(\overline{X}_{\overline{\mathbb{F}}_q},\overline{\mathbb{Q}}_{\ell}(1)))=q 
\end{align*}
by Lemma \ref{try4}. 
\end{proof}

\begin{prop}\label{433}
Let $g_0=(u,1,\ldots,1) \in \SL_2(\bF_q)^n \subset 
\Sp_{2n}(\bF_q)$, where 
$u \in \SL_2(\bF_q)$ is as in \eqref{uu}. 
We have 
\[
W_{n,\psi}[\nu]^+(g_0)-W_{n,\psi}[\nu]^-(g_0)
=q^{n-1} G(\psi). 
\]
In particular, we have $W_{n,\psi}[\nu]^+(g_0) \neq W_{n,\psi}[\nu]^-(g_0)$. 
\end{prop}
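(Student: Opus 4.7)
The plan is to combine the Frobenius eigenspace structure with an isotypic projection and a Künneth decomposition of $W_{n,\psi}$, and then reduce to the two-dimensional trace computations already carried out in Corollaries \ref{try3-5} and \ref{try5}.

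First I would note that since $\mathrm{Fr}_q$ commutes with $g_0$ on $W_{n,\psi}[\nu]$ and acts as $\pm 1$ on the plus/minus eigenspaces, one has
\[
W_{n,\psi}[\nu]^+(g_0)-W_{n,\psi}[\nu]^-(g_0)=\Tr (\mathrm{Fr}_q g_0; W_{n,\psi}[\nu]).
\]
Using the isotypic projector for the central character $\nu$ (and $\nu^{-1}=\nu$ since $\nu^2=1$), this becomes
\[
\frac{1}{q+1}\sum_{\zeta \in \mu_{q+1}} \nu(\zeta) \Tr (\mathrm{Fr}_q g_0 \zeta ; W_{n,\psi}).
\]

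Next I would pass through the isomorphism \eqref{eq:X'Lpsi}, which identifies $W_{n,\psi}$ (up to the Tate twist $(n)$) with $H_{\mathrm{c}}^{2n}(\mathbb{A}^{2n},\pi'^\ast \mathscr{L}_{\psi})$. Since $\pi'=\sum_{i=1}^n \pi'_i$ with $\pi'_i(x_i,y_i)=x_i y_i^q - x_i^q y_i$, the standard additivity of the Artin--Schreier sheaf under sums gives the external tensor product decomposition $\pi'^\ast \mathscr{L}_{\psi} \simeq \boxtimes_{i=1}^n \mathscr{L}^0_{\psi}$. The Künneth formula together with the vanishing of lower degree compactly supported cohomology for affine smooth varieties with smooth $\overline{\mathbb{Q}}_\ell$-sheaves then yields a canonical isomorphism
\[
W_{n,\psi}\simeq \bigotimes_{i=1}^n H_{\mathrm{c}}^2(\mathbb{A}^2,\mathscr{L}^0_{\psi}(1)).
\]
The key observation is that this decomposition is equivariant for the diagonal action of both $\mathrm{Fr}_q$ and $\mu_{q+1}$ (scalar multiplication on each pair $(x_i,y_i)$), while $g_0=(u,1,\ldots,1)$ acts as $u$ on the first factor and trivially on the others (and $u$ does preserve $x_i y_i^q-x_i^q y_i$ by a short characteristic $p$ computation).

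With this setup, the trace factors as
\[
\Tr(\mathrm{Fr}_q g_0 \zeta; W_{n,\psi})=\Tr(F u \zeta ; H_{\mathrm{c}}^2(\mathbb{A}^2,\mathscr{L}^0_{\psi}(1))) \cdot \Tr(F \zeta; H_{\mathrm{c}}^2(\mathbb{A}^2,\mathscr{L}^0_{\psi}(1)))^{n-1}.
\]
Since $u$ and $\zeta$ commute, the first factor is $\Tr(f_\zeta; H_{\mathrm{c}}^2(\mathbb{A}^2,\mathscr{L}^0_{\psi}(1)))$, and Corollary \ref{try5} gives the value $q$ for each of the other $n-1$ factors. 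Plugging in and applying Corollary \ref{try3-5} to the resulting sum over $\zeta$ yields $q^{n-1}G(\psi)$, which is nonzero because $G(\psi)^2=\bigl(\frac{-1}{\mathbb{F}_q}\bigr)q$.

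The main delicate point in this plan is the compatibility of the Künneth decomposition with the $\mathrm{Fr}_q$, $\mu_{q+1}$ and $g_0$ actions simultaneously; once the actions on each tensor factor are pinned down, everything is bookkeeping and an appeal to the already-proved two-variable trace identities. Since all the cohomology groups involved live in top degree for affine smooth varieties, there are no sign ambiguities from degree shifts in Künneth, so the factorization holds on the nose.
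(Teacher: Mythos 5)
Your proposal is correct and matches the paper's proof in all essentials: both reduce the difference of traces to $\frac{1}{q+1}\sum_{\zeta \in \mu_{q+1}}\nu(\zeta)\Tr(F\zeta g_0; H^{2n}_{\mathrm{c}}(\bA^{2n},\pi'^*\sL_\psi(n)))$ via the $\mu_{q+1}$-isotypic projection and the eigenvalue $\pm 1$ structure of $\mathrm{Fr}_q$ on $W_{n,\psi}[\nu]$, factor via K\"unneth, evaluate the $n-1$ trivial factors by Corollary~\ref{try5} and conclude by Corollary~\ref{try3-5}. The only cosmetic difference is that the paper first writes $W_{n,\psi}[\nu]^\kappa(g)$ as an average over $\mu_{q+1}\rtimes\Gal(\bF_{q^2}/\bF_q)$ with the character $\nu_\kappa$ and then takes the difference of the $\kappa=\pm$ formulas, while you take the difference first and then project; these are the same computation in a different order.
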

\begin{proof}
Let $\kappa \in \{\pm\}$. 
Let $\nu_{\kappa}$ be the character of 
$\mu_{q+1} \rtimes \Gal (\bF_{q^2}/\bF_q)$ 
extending $\nu$ by the condition 
$\nu_{\kappa}(\mathrm{Fr}_q)=\kappa$. 
For $g \in \Sp_{2n}(\bF_q)$, 
the trace $W_{n,\psi}[\nu]^{\kappa}(g)=W_{n,\psi}[\nu_{\kappa}](g)$ equals 
\begin{align*}
&\frac{1}{\lvert \mu_{q+1} \rtimes \Gal (\bF_{q^2}/\bF_q) \rvert} 
 \sum_{h \in \mu_{q+1} \rtimes \Gal (\bF_{q^2}/\bF_q)} 
 \nu_{\kappa}(h)^{-1} W_{n,\psi} (h g) \\ 
&=\frac{1}{2(q+1)} \sum_{\zeta \in \mu_{q+1}} \nu(\zeta)
\left(\Tr(\zeta g; H_{\mathrm{c}}^{2n}(\mathbb{A}^{2n},\pi'^\ast \mathscr{L}_{\psi}(n)))+\kappa
\Tr(F \zeta g; H_{\mathrm{c}}^{2n}(\mathbb{A}^{2n},\pi'^\ast \mathscr{L}_{\psi}(n)))\right). 
\end{align*} 
Hence we have 
\begin{align*}
& W_{n,\psi}[\nu]^+(g_0)
-W_{n,\psi}[\nu]^-(g_0) \\
&=
 \frac{1}{q+1} \sum_{\zeta \in \mu_{q+1}}
\nu(\zeta) \Tr (F \zeta g_0; H_{\mathrm{c}}^{2n}(\mathbb{A}^{2n},\pi'^\ast \mathscr{L}_{\psi}(n)))\\
& =
\frac{1}{q+1} \sum_{\zeta \in \mu_{q+1}}
\nu(\zeta) \Tr(F\zeta u; H_{\mathrm{c}}^2(\mathbb{A}^2,\mathscr{L}^0_{\psi}(1))) \Tr(F\zeta; H_{\mathrm{c}}^2(\mathbb{A}^2,\mathscr{L}^0_{\psi}(1)))^{n-1} \\
&= \frac{q^{n-1}}{q+1} \sum_{\zeta \in \mu_{q+1}}
\nu(\zeta) \Tr(F\zeta u; H_{\mathrm{c}}^2(\mathbb{A}^2,\mathscr{L}^0_{\psi}(1)))
=q^{n-1} G(\psi),  
\end{align*}
where the second equality 
follows from the K\"{u}nneth formula, the third one 
follows from Corollary \ref{try5} and 
the last one follows from 
Corollary \ref{try3-5}. 
\end{proof}

\section{Representation of $\Sp_{2n}(\bF_q)$}\label{sec:RepSP}

\subsection{Weil representation of $\Sp_{2n}(\bF_q)$}
Assume that $p \neq 2$ in this subsection. 
Let $\psi \in \mathbb{F}_q^{\vee} \setminus \{1\}$. 
A Weil representation of $\Sp_{2n}(\bF_q)$
associated to $\psi$ 
is studied in \cite{GerWeil} and \cite{HoweCharW}, 
which we denote by $\omega_{\psi}$. 
This has dimension $q^n$, and 
splits to two irreducible 
representations $\omega_{\psi,+}$ and $\omega_{\psi,-}$, which are of dimensions $(q^n+1)/2$
and $(q^n-1)/2$, respectively by \cite[Corollary 4.4 (a)]{GerWeil}. 
For $\psi \in \mathbb{F}_q^{\vee}$ and 
$a \in \mathbb{F}_q$, let $\psi_a$ denote the character of 
$\mathbb{F}_q$ defined by 
$x \mapsto \psi(ax)$ for $x \in \mathbb{F}_q$.  
For $\psi \in \mathbb{F}_q^{\vee} \setminus \{1\}$, it is known that $\omega_{\psi,\kappa} \simeq 
\omega_{\psi_{a},\kappa}$ if and only if 
$a \in (\mathbb{F}_q^{\times})^2$ by \cite[Corollary 2.12]{ShiChWe}. 

For an element $s \in \SO_{2n+1}(\bF_q)$, 
let $\Spec (s)$ denote the set of 
the eigenvalues of $s$ as an element of 
$\GL_{2n+1}(\bF_q)$. 
For $\kappa \in \{\pm\}$, 
let $s_{\kappa} \in \SO_{2n+1}(\bF_q)$ 
be a semisimple element such that $\Spec(s_{\kappa})=\{1,-1,\ldots,-1\}$ 
and $C_{\SO_{2n+1}(\bF_q)}(s_{\kappa})=\oO_{2n}^{\kappa}(\bF_q)$. 

\begin{lem}\label{lem:omels}
We have 
$\omega_{\psi,\kappa} \in \cE(\Sp_{2n},(s_{\kappa}))$ 
for $\kappa \in \{\pm\}$. 
\end{lem}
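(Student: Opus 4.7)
The plan is to identify the Lusztig series containing $\omega_{\psi,\kappa}$ by a dimension match against the lower bound from Jordan decomposition. Using the standard order formulas for $\mathit{Sp}_{2n}(\bF_q)$ and $O_{2n}^{\kappa}(\bF_q)$ (with $\kappa \in \{\pm\}$ identified with $\pm 1$), I would first compute
\[
\frac{\lvert \mathit{Sp}_{2n}(\bF_q) \rvert_{p'}}{\lvert O_{2n}^{\kappa}(\bF_q) \rvert_{p'}}
= \frac{\prod_{i=1}^n (q^{2i}-1)}{2(q^n-\kappa)\prod_{i=1}^{n-1}(q^{2i}-1)} = \frac{q^n+\kappa}{2},
\]
and observe that this equals $\dim \omega_{\psi,\kappa}$ by \cite[Corollary 4.4 (a)]{GerWeil}. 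By Jordan decomposition, every $\rho \in \cE(\mathit{Sp}_{2n}, (s))$ satisfies $\rho(1) \geq \lvert \mathit{Sp}_{2n}(\bF_q) \rvert_{p'}/\lvert C_{\mathit{SO}_{2n+1}(\bF_q)}(s) \rvert_{p'}$ (cf.\ the bound underlying Lemma \ref{lem:lirr}), with equality precisely when the Jordan correspondent in the unipotent series of $C_{\mathit{SO}_{2n+1}(\bF_q)}(s)$ is the trivial character. Hence $\omega_{\psi,\kappa}$ realizes the minimal degree in $\cE(\mathit{Sp}_{2n}, (s_\kappa))$.

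Next, since $\omega_{\psi,\kappa}$ is irreducible it lies in a unique Lusztig series $\cE(\mathit{Sp}_{2n}, (s))$, and the degree bound above forces $\lvert C_{\mathit{SO}_{2n+1}(\bF_q)}(s) \rvert_{p'} \geq \lvert O_{2n}^{\kappa}(\bF_q) \rvert_{p'}$. I would then run through the classification of semisimple centralizers in $\mathit{SO}_{2n+1}(\bF_q)$---whose connected components are products of orthogonal, unitary, and general linear factors of bounded total rank---to show that the only classes with centralizer of this $p'$-order are those of $s_+$ and $s_-$. The precise value of the quotient, $(q^n+\kappa)/2$, then distinguishes $s_\kappa$ from $s_{-\kappa}$, placing $\omega_{\psi,\kappa}$ in $\cE(\mathit{Sp}_{2n}, (s_\kappa))$.

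The main obstacle is the centralizer enumeration: one needs to rule out every other semisimple conjugacy class in $\mathit{SO}_{2n+1}(\bF_q)$ whose $p'$-centralizer order is at least $\lvert O_{2n}^{\kappa}(\bF_q) \rvert_{p'}$. A cleaner alternative, which I expect the paper to follow, is to bypass the enumeration by invoking the classical Jordan decomposition of the Weil character (as worked out in \cite{GerWeil} and \cite{HoweCharW}), where $\omega_{\psi,\kappa}$ is identified directly as the Jordan correspondent of the trivial unipotent character of $O_{2n}^{\kappa}(\bF_q)$; this identification places $\omega_{\psi,\kappa}$ in $\cE(\mathit{Sp}_{2n}, (s_\kappa))$ with no need for a case analysis.
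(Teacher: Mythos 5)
Your degree computation is correct, and the lower bound from Jordan decomposition is a legitimate first step, but neither of your two routes actually closes the argument.

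Your primary route — deduce $\lvert C_{\mathit{SO}_{2n+1}(\bF_q)}(s) \rvert_{p'} \geq \lvert O_{2n}^{\kappa}(\bF_q) \rvert_{p'}$ and then enumerate all semisimple classes meeting this bound — is the hard part, and you acknowledge that you do not carry it out. Note also that the degree equality $\omega_{\psi,\kappa}(1) = \lvert G \rvert_{p'}/\lvert O_{2n}^{\kappa}(\bF_q) \rvert_{p'}$ only gives the inequality $\lvert C_{G^*}(s)\rvert_{p'} \geq \lvert O_{2n}^{\kappa}(\bF_q) \rvert_{p'}$ (with the unipotent Jordan correspondent $\phi$ possibly having $\phi(1)>1$ if the centralizer is strictly larger), so there is genuinely something to rule out. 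Your ``cleaner alternative'' does not rescue this: neither \cite{GerWeil} (1977) nor \cite{HoweCharW} (1973) phrases the decomposition of the Weil representation in terms of Lusztig series $\cE(G,(s))$; those references construct the Weil representation and compute its character, but identifying the geometric conjugacy class of the associated semisimple element is not done there, so the ``classical Jordan decomposition of the Weil character'' you expect to quote is not available in those sources.

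The paper's actual route sidesteps the enumeration entirely by running the logic in the opposite direction. Using \cite[13.23 Theorem, 13.24 Remark]{DiMiRepLie}, it counts the irreducible constituents of $\cE(\mathit{Sp}_{2n},(s_{\kappa}))$ with minimal degree $(q^n+\kappa)/2$ (there are exactly two, corresponding to the two degree-one unipotent characters of $O_{2n}^{\kappa}(\bF_q)$, namely $\mathbf{1}$ and $\mathrm{sgn}$). It then invokes \cite[Lemma 4.9]{LOSTOre}, a classification result asserting that the irreducible characters of $\mathit{Sp}_{2n}(\bF_q)$ of degree $(q^n+\kappa)/2$ are precisely the two Weil constituents $\omega_{\psi,\kappa}$. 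Matching the two lists forces $\omega_{\psi,\kappa} \in \cE(\mathit{Sp}_{2n},(s_{\kappa}))$. This low-degree classification is the missing ingredient in your argument; without it (or without the full centralizer enumeration), the proposal does not prove the lemma.
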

\begin{proof}
We know that there are two irreducible representations 
in $\cE(\Sp_{2n},(s_{\kappa}))$ of degree 
$(q^n + \kappa)/2$ by 
\cite[13.23 Theorem, 13.24 Remark]{DiMiRepLie}. 
Hence the claim follows from 
\cite[Lemma 4.9]{LOSTOre}. 
\end{proof}

By Lemma \ref{lem:lirr}, 
these $\omega_{\psi,\kappa}$ 
remain irreducible after mod $\ell$ reduction.  
These mod $\ell$ irreducible modules of 
$\Sp_{2n}(\bF_q)$ are called 
\textit{Weil modules} in \cite[\S 5]{GMSTCross}. 
We will use this terminology later. 
There are just two Weil modules for each dimension 
(\cf \cite[p.~305]{GMSTCross}).

\begin{lem}\label{weil}
Assume that $p \neq 2$. 
We have $\omega_{\psi,\kappa} \notin 
\mathcal{E}_{\ell}
(\Sp_{2n},(1))$ for 
any $\psi \in \mathbb{F}_q^{\vee} \setminus \{1\}$
and $\kappa \in \{\pm\}$.  
\end{lem}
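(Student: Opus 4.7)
The plan is to combine Lemma~\ref{lem:omels} with Lemma~\ref{lem:diflb}. By Lemma~\ref{lem:omels}, we already know $\omega_{\psi,\kappa} \in \mathcal{E}(\mathit{Sp}_{2n},(s_\kappa))$, so to rule out membership in $\mathcal{E}_\ell(\mathit{Sp}_{2n},(1))$ it suffices to check two things: that $s_\kappa$ and $1$ are semisimple $\ell'$-elements of the dual group $\mathit{SO}_{2n+1}(\bF_q)$, and that they are not geometrically conjugate. Lemma~\ref{lem:diflb} applied with $s = s_\kappa$ and $s' = 1$ will then produce the conclusion directly.

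The $\ell'$-property is immediate from the spectral condition $\Spec(s_\kappa) = \{1,-1,\ldots,-1\}$, which forces $s_\kappa^2 = 1$ and hence $s_\kappa$ to have order dividing $2$. Since the standing assumption in \S\ref{sec:RepSP} is $\ell \neq 2$, this makes $s_\kappa$ an $\ell'$-element, and of course $1$ is too.

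For the non-geometric-conjugacy, I would argue via eigenvalues in the standard $(2n+1)$-dimensional representation of $\mathit{SO}_{2n+1}$: geometric conjugation preserves the multiset of eigenvalues in this representation, but $s_\kappa$ has $-1$ as an eigenvalue with multiplicity $2n \geq 2$ while the identity has every eigenvalue equal to $1$. Hence $s_\kappa$ and $1$ are not geometrically conjugate, and Lemma~\ref{lem:diflb} yields $\omega_{\psi,\kappa} \notin \mathcal{E}_\ell(\mathit{Sp}_{2n},(1))$. The whole argument is really just bookkeeping built on top of Lemma~\ref{lem:omels} and the partition of $\Irr(\mathit{Sp}_{2n}(\bF_q))$ into Lusztig series, so I do not anticipate any real obstacle.
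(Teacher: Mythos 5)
Your proof is correct and follows exactly the same route as the paper, which proves the lemma by combining Lemma~\ref{lem:omels}, Lemma~\ref{lem:diflb}, and the assumption $\ell \neq 2$. The eigenvalue argument you spell out for why $s_\kappa$ is an $\ell'$-element not geometrically conjugate to $1$ is the implicit content the paper leaves to the reader.
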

\begin{proof}
This follows from Lemma \ref{lem:diflb}, 
Lemma \ref{lem:omels} and $\ell \neq 2$. 
\end{proof}
\begin{rem}
If $n=2$, $p \neq 2$ and $\ell=2$, 
the representation $\omega_{\psi,\kappa}$ 
belongs to the principal block 
by \cite[p.~710]{WhiDecndiv}. 
\end{rem}

\subsection{Frobenius action}
In the sequel, every cohomology 
is an $\Sp_{2n}(\bF_q)$-representation, 
and every homomorphism is 
$\Sp_{2n}(\bF_q)$-equivariant. 
Let 
$\Lambda \in \{ \overline{\mathbb{Q}}_{\ell}, 
 \mathcal{O}, \ol{\bF}_{\ell} \}$. 
Let $\psi \in \Hom (\bF_q,\Lambda^{\times}) \setminus \{ 1 \}$ 
and $\chi \in \Hom (\mu_{q+1},\Lambda^{\times})$ 
such that $\chi^2=1$. 
We set 
\[
 H_{\mathrm{c}}^{2n-1}(Y'_{2n,\overline{\mathbb{F}}_q},\sK_{\chi})^{\kappa} = 
 \begin{cases}
 H_{\mathrm{c}}^{2n-1}(Y'_{2n,\overline{\mathbb{F}}_q},\Lambda)^{\mathrm{Fr}_q=\kappa q^{n-1}} & \textrm{if $\chi =1$,}\\ 
 H_{\mathrm{c}}^{2n-1}(Y'_{2n,\overline{\mathbb{F}}_q},\sK_{\nu})^{\mathrm{Fr}_q=-\kappa q^n G(\psi)^{-1}} & \textrm{if $\chi =\nu$}
\end{cases}
\]
for $\kappa \in \{\pm\}$. 
\begin{lem}
\begin{enumerate}
\item\label{en:dec}
We have a decomposition 
\begin{equation*}
H_{\mathrm{c}}^{2n-1}(Y'_{2n,\overline{\mathbb{F}}_q},\sK_{\chi}) 
\simeq 
H_{\mathrm{c}}^{2n-1}(Y'_{2n,\overline{\mathbb{F}}_q},\sK_{\chi})^+ \oplus 
H_{\mathrm{c}}^{2n-1}(Y'_{2n,\overline{\mathbb{F}}_q},\sK_{\chi})^- . 
\end{equation*}
\item
If $\Lambda=\cO$, 
we have isomorphisms 
\begin{gather}\label{irr-2}
\begin{aligned}
& H_{\mathrm{c}}^{2n-1}(Y'_{2n,\overline{\mathbb{F}}_q},\sK_{\chi})^{\kappa} \otimes_{\mathcal{O}}
\ol{\bF}_{\ell} \simeq H_{\mathrm{c}}^{2n-1}(Y'_{2n,\overline{\mathbb{F}}_q},\sK_{\ol{\chi}})^{\kappa}, \\
& 
H_{\mathrm{c}}^{2n-1}(Y'_{2n,\overline{\mathbb{F}}_q},\sK_{\chi})^{\kappa} \otimes_{\mathcal{O}} \overline{\mathbb{Q}}_{\ell} \simeq W_{n,\psi}[\chi]^{\kappa}
\end{aligned}
\end{gather}
for $\kappa \in \{\pm\}$. 
\end{enumerate}
\end{lem}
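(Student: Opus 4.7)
The plan is to transfer the Frobenius involution on $W_{n,\psi}[\chi]$ established in \cite[Lemma~2.4]{ITGeomHW} (valid for $\chi^2=1$) to a two-eigenvalue statement on $H_{\mathrm{c}}^{2n-1}(Y'_{2n,\ol{\bF}_q},\sK_\chi)$, and then produce integrally defined idempotent projectors onto the two eigenspaces.

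First I would pin down the Frobenius eigenvalues. For $\chi=1$, I would refine the proof of Lemma \ref{tl2} to track Tate twists: the $\mu_{q+1}$-invariant version of the chain of isomorphisms there is Frobenius equivariant and gives $W_{n,\psi}[1]\simeq H_{\mathrm{c}}^{2n-1}(Y'_{2n,\ol{\bF}_q},\ol{\bQ}_{\ell}(n-1))$, so the involution eigenvalues $\pm 1$ on $W_{n,\psi}[1]$ correspond to Frobenius eigenvalues $\pm q^{n-1}$ on $H_{\mathrm{c}}^{2n-1}(Y'_{2n,\ol{\bF}_q},\ol{\bQ}_{\ell})$. For $\chi=\nu$, one needs the analogous $\nu$-isotypic comparison, with an additional scaling by the Gauss sum $G(\psi)$. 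The Gauss sum factor ultimately comes from the Frobenius eigenvalue of $H_{\mathrm{c}}^1(\mathbb{A}^1,\sL_{\psi}\otimes\sK_{\nu})$, which governs Frobenius on the $\nu$-isotypic component of the $\pi'^{\ast}\sL_{\psi}$-cohomology; this is the same factor that appears in the trace computation of Proposition \ref{433}. Propagating it through the comparison yields Frobenius eigenvalues $\mp q^n G(\psi)^{-1}$ on $H_{\mathrm{c}}^{2n-1}(Y'_{2n,\ol{\bF}_q},\sK_\nu)$, matching the definitions in the statement.

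With the eigenvalues $\pm\lambda$ (where $\lambda=q^{n-1}$ or $q^n G(\psi)^{-1}$) in hand, the projectors $e^{\kappa}=\tfrac{1}{2}(1+\kappa\lambda^{-1}\mathrm{Fr}_q)$ lie in $\cO[\mathrm{Fr}_q]$ because $2$ is a unit (thanks to $\ell\neq 2$), $q$ is a unit ($\ell\neq p$), and $G(\psi)$ is a unit thanks to $G(\psi)^2=\pm q$. Applying these projectors yields the direct sum decomposition for $\Lambda=\cO$, and base changing gives the decomposition for $\Lambda\in\{\ol{\bQ}_{\ell},\ol{\bF}_{\ell}\}$.

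The second part is then formal. Tensoring the decomposition with $\ol{\bQ}_{\ell}$ and matching eigenspaces via the Frobenius-equivariant comparison already used above yields the second isomorphism in \eqref{irr-2}. Tensoring with $\ol{\bF}_{\ell}$ combined with a Frobenius-equivariant universal-coefficient identification $H_{\mathrm{c}}^{2n-1}(Y'_{2n,\ol{\bF}_q},\sK_\chi)\otimes_\cO\ol{\bF}_{\ell}\simeq H_{\mathrm{c}}^{2n-1}(Y'_{2n,\ol{\bF}_q},\sK_{\ol{\chi}})$ yields the first; this identification uses the vanishing $H_{\mathrm{c}}^{2n}(Y'_{2n,\ol{\bF}_q},\sK_\chi)[\fm]=0$ for $\chi\in\{1,\nu\}$, which follows from Proposition \ref{lp} applied to $Y_{2n}\simeq Y'_{2n,\bF_{q^2}}$ (the $\chi=1$ correction vanishes because $(-1)^{2n}=1$, and the $\chi=\nu$ correction vanishes because $\nu$ has order $2$ and hence lies in the prime-to-$\ell$ factor $\chi_r$ when $\ell\neq 2$). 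The main technical obstacle will be nailing down the Tate twist and Gauss sum normalization in the $\chi=\nu$ comparison; once that is in hand, the rest is routine eigenspace manipulation.
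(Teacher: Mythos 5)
Your proposal is correct and follows essentially the same route as the paper's (very terse) proof: establish the Frobenius involution over $\overline{\mathbb{Q}}_\ell$ via Lemma \ref{tl2} and the comparison with $W_{n,\psi}[\chi]$ from \cite{ITGeomHW}, descend to $\mathcal{O}$ using freeness of the middle cohomology (Lemma \ref{affine}), and handle the mod $\ell$ reduction via Proposition \ref{lp}. The only difference is that you make explicit the idempotent projectors, Tate twists, and Gauss-sum normalization that the paper leaves implicit in the definition of the $\pm$ eigenspaces.
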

\begin{proof}
The claim \ref{en:dec} for $\Lambda=\ol{\bQ}_{\ell}$ and 
the second isomorphism in \eqref{irr-2} 
follow from 
Lemma \ref{tl2} and 
\cite[Lemma 3.4, Lemma 4.3, Corollary 4.6]{ITGeomHW}. 
Then the claim \ref{en:dec} for $\Lambda=\cO$ 
follows from Lemma \ref{affine} \ref{en:frmid}. 
Further, the claim \ref{en:dec} for $\Lambda=\ol{\bF}_{\ell}$ 
and the first isomorphism in \eqref{irr-2} follow from 
Proposition \ref{lp}. 
\end{proof}

\subsection{Non-unipotent representation}
In the following we assume that $\ell \neq 2$.
For $\chi \in \mu_{q+1}^{\vee}$, 
let $s_{\chi} \in \SO_{2n+1}(\bF_q)$ 
be a semisimple element 
corresponding to $\chi$ 
such that 
$\Spec(s_{\chi})=\{1,\ldots,1,\zeta_{\chi},\zeta_{\chi}^{-1} \}$ 
for $\zeta_{\chi} \in \mu_{q+1}$ 
and 
\begin{equation}\label{eq:Cents}
C_{\SO_{2n+1}(\bF_q)}(s_{\chi})= 
\begin{cases}
\SO_{2n-1}(\bF_q) \times \oU_1(\bF_q) & \textrm{if $\chi^2 \neq 1$,} \\ 
\SO_{2n-1}(\bF_q) \times \oO_2^-(\bF_q) & \textrm{if $p \neq 2$ and $\chi = \nu$.} 
\end{cases}
\end{equation}
We have 
\begin{equation}\label{eq:lsW}
W_{n,\psi}[\chi] \in \mathcal{E}(\Sp_{2n},(s_{\chi})) \quad 
\textrm{if $\chi^2 \neq 1$}, \quad 
W_{n,\psi}[\chi]^{\kappa} \in \mathcal{E}(\Sp_{2n},(s_{\chi})) \quad 
\textrm{if $\chi^2 =1$} 
\end{equation} 
by \cite[Proposition 7.12]{ITGeomHW}. 
We write as $q+1=\ell^a r$ with $(\ell,r)=1$. 

\begin{prop}\label{prop:nc1}
Let 
$\chi \in \mu_{q+1}^{\vee}$. 
We write as 
$\chi=\chi_{\ell^a} \chi_r$ as before. 
\begin{enumerate}
\item\label{en:Birchi} 
If $\chi_r^2 \neq 1$, the Brauer character 
$\overline{W_{n,\psi}[\chi]}$ 
is irreducible. 
\item 
Assume $p \neq 2$. 
For $\kappa \in \{\pm\}$, the Brauer character 
$\overline{W_{n,\psi}[\nu]^{\kappa}}$
is irreducible. 
\end{enumerate}
\end{prop}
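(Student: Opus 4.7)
My plan is to apply Lemma \ref{lem:lirr} in both cases, matching the degree of the ordinary irreducible character in question against the index of the $p'$-part of a centralizer in the dual group $\mathit{SO}_{2n+1}(\bF_q)$.

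For (1), I first need to exhibit $W_{n,\psi}[\chi]$ as a member of $\mathcal{E}_\ell(\mathit{Sp}_{2n},(s))$ for a semisimple $\ell'$-element $s$. The idea is to split $s_\chi$ according to the Sylow decomposition of $\mu_{q+1}$: write $\zeta_\chi = \zeta_{\ell^a}\zeta_r$ with $\zeta_{\ell^a}$ of $\ell$-power order and $\zeta_r$ of $\ell'$-order, and observe that $s_\chi$ has a commuting factorization $s_\chi = s_{\chi_r}\, t$, where $s_{\chi_r} \in \mathit{SO}_{2n+1}(\bF_q)$ is the element associated with $\chi_r$ and $t$ is an $\ell$-element lying in $C_{\mathit{SO}_{2n+1}(\bF_q)}(s_{\chi_r})$. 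Since $(r,\ell)=1$, $s_{\chi_r}$ is an $\ell'$-element, and combining \eqref{eq:lsW} with the definition of $\mathcal{E}_\ell$ in \S \ref{ssec:Lusbl} yields $W_{n,\psi}[\chi] \in \mathcal{E}_\ell(\mathit{Sp}_{2n},(s_{\chi_r}))$. The hypothesis $\chi_r^2 \neq 1$ puts us in the first case of \eqref{eq:Cents}, so $C_{\mathit{SO}_{2n+1}(\bF_q)}(s_{\chi_r}) = \mathit{SO}_{2n-1}(\bF_q) \times U_1(\bF_q)$. A direct order computation then gives
\[
\frac{|\mathit{Sp}_{2n}(\bF_q)|_{p'}}{|C_{\mathit{SO}_{2n+1}(\bF_q)}(s_{\chi_r})|_{p'}} = \frac{q^{2n}-1}{q+1},
\]
which by Lemma \ref{lem:dim} agrees with $\dim W_{n,\psi}[\chi]$. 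Lemma \ref{lem:lirr} then forces $\overline{W_{n,\psi}[\chi]}$ to be irreducible.

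For (2), I would proceed more directly. Since $\nu$ has order $2$ and $\ell \neq 2$, the element $s_\nu$ is itself an $\ell'$-element, so no Jordan splitting is needed. By \eqref{eq:lsW} we have $W_{n,\psi}[\nu]^\kappa \in \mathcal{E}(\mathit{Sp}_{2n},(s_\nu)) \subset \mathcal{E}_\ell(\mathit{Sp}_{2n},(s_\nu))$. The second case of \eqref{eq:Cents} gives the centralizer $\mathit{SO}_{2n-1}(\bF_q) \times O^-_2(\bF_q)$, and $|O^-_2(\bF_q)|_{p'} = 2(q+1)$. The resulting index
\[
\frac{|\mathit{Sp}_{2n}(\bF_q)|_{p'}}{|C_{\mathit{SO}_{2n+1}(\bF_q)}(s_\nu)|_{p'}} = \frac{q^{2n}-1}{2(q+1)}
\]
matches $\dim W_{n,\psi}[\nu]^\kappa$ by Lemma \ref{lem:dim}, and Lemma \ref{lem:lirr} concludes.

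The only non-routine step I anticipate is the Jordan-decomposition check in (1): one has to verify that the splitting $s_\chi = s_{\chi_r}\, t$ coming from $\zeta_\chi = \zeta_{\ell^a}\zeta_r$ really realizes $t$ as an $\ell$-element of $C_{\mathit{SO}_{2n+1}(\bF_q)}(s_{\chi_r})$ in the exact sense required by the definition of $\mathcal{E}_\ell$. Once that is in place, everything else reduces to the standard order formulas for classical groups and the dimension table of Lemma \ref{lem:dim}, both already assembled earlier in the paper.
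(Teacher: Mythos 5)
Your proof is correct, and the degree computations in both parts match what the paper uses. For part (2) your argument coincides with the paper's: $s_\nu$ is already an $\ell'$-element since $\ell \neq 2$, so one applies Lemma \ref{lem:lirr} directly with the centralizer $\mathit{SO}_{2n-1}(\bF_q) \times O_2^-(\bF_q)$ from \eqref{eq:Cents}.

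For part (1) you take a genuinely different route from the paper. The paper first replaces $\chi$ by $\chi_r$ as a Brauer character, invoking Corollary~\ref{sde} (more precisely the $\mathit{Sp}_{2n}$-equivariant isomorphism underlying Proposition~\ref{lp}\ref{en:KFd}, which gives $\overline{W_{n,\psi}[\chi]}=\overline{W_{n,\psi}[\chi_r]}$ when $\chi_r\neq 1$); once $\chi_{\ell^a}=1$, the element $s_{\chi_r}$ is an $\ell'$-element and Lemma~\ref{lem:lirr} applies directly. You instead keep $\chi$ fixed and factor $s_\chi = s_{\chi_r}\,t$ with $t$ an $\ell$-element commuting with $s_{\chi_r}$, which by the definition of $\mathcal{E}_\ell$ and \eqref{eq:lsW} places $W_{n,\psi}[\chi]\in\mathcal{E}_\ell(\mathit{Sp}_{2n},(s_{\chi_r}))$ without any appeal to the cohomological mod $\ell$ reduction result. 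The one point to make explicit in your Jordan-decomposition step is that $t = s_\chi s_{\chi_r}^{-1}$ lies in $\mathit{SO}_{2n+1}(\bF_q)$ (it does, being a product of two $\bF_q$-rational commuting elements) and has $\ell$-power order (it does, since its nontrivial eigenvalues $\zeta_{\ell^a}^{\pm1}$ have $\ell$-power order), and that the hypothesis $\chi_r^2\neq1$ already implies $\chi^2\neq1$, so that \eqref{eq:lsW} and the correct case of \eqref{eq:Cents} apply. Your approach trades the cohomological input (Proposition~\ref{lp}) for a purely group-theoretic verification inside the dual group, which is arguably cleaner and self-contained at this point in the paper; the paper's route has the virtue of reusing a reduction already needed elsewhere.
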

\begin{proof}
To show the claim \ref{en:Birchi}, 
we may assume that $\chi_{\ell^a}=1$ 
by Corollary \ref{sde}. 
Then the claims follow from 
Lemma \ref{lem:lirr} using Lemma \ref{lem:dim}, 
\eqref{eq:Cents} and \eqref{eq:lsW}. 
\end{proof}

\begin{lem}\label{lem:dif}
The Brauer characters  
$\overline{W_{n,\psi}[\nu]^+}$ and 
$\overline{W_{n,\psi}[\nu]^-}$ are different. 
\end{lem}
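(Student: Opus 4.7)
The plan is to evaluate both Brauer characters at the element $g_0 = (u,1,\ldots,1) \in \mathit{SL}_2(\bF_q)^n \subset \mathit{Sp}_{2n}(\bF_q)$ from Proposition \ref{433} and show they differ there.

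First I would observe that $g_0$ has order $p$, and since $\ell \neq p$ is assumed in this subsection, $g_0$ is an $\ell$-regular element of $\mathit{Sp}_{2n}(\bF_q)$. By the definition of a Brauer character as a lift of eigenvalues, for any ordinary representation $\pi$ and any $\ell$-regular element $g$, the value $\overline{\pi}(g)$ is obtained from $\pi(g)$ via a chosen embedding $\ol{\bZ}_\ell \to \ol{\bF}_\ell$. Consequently
\[
 \overline{W_{n,\psi}[\nu]^{+}}(g_0) - \overline{W_{n,\psi}[\nu]^{-}}(g_0)
\]
is the image in $\ol{\bF}_\ell$ of $W_{n,\psi}[\nu]^+(g_0) - W_{n,\psi}[\nu]^-(g_0)$.

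Next I would invoke Proposition \ref{433}, which gives
\[
 W_{n,\psi}[\nu]^+(g_0) - W_{n,\psi}[\nu]^-(g_0) = q^{n-1} G(\psi).
\]
It remains to check that the reduction of $q^{n-1} G(\psi)$ in $\ol{\bF}_\ell$ is nonzero. Since $\ell \neq p$, we have $\ell \nmid q$, so $q^{n-1}$ is a unit in $\ol{\bF}_\ell$. Moreover the Gauss sum satisfies $G(\psi)^2 = \bigl(\tfrac{-1}{\bF_q}\bigr) q$, which is again a unit in $\ol{\bF}_\ell$; hence the reduction of $G(\psi)$ is nonzero. Therefore $q^{n-1} G(\psi) \neq 0$ in $\ol{\bF}_\ell$, so the two Brauer characters take different values at $g_0$ and are thus different.

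There is no real obstacle: the content has already been packaged in Proposition \ref{433}. The only point that requires a brief mention is the $\ell$-regularity of $g_0$, which is immediate from $\mathrm{ord}(g_0) = p \neq \ell$, together with the nonvanishing of the reduction of the Gauss sum $G(\psi)$, which follows from $G(\psi)^2 = \pm q$ and $\ell \nmid q$.
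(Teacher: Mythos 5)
Your proposal is correct and follows the same route as the paper: evaluate at the $\ell$-regular element $g_0$ from Proposition \ref{433} and note that the difference $q^{n-1}G(\psi)$ is nonzero. One small remark on framing: a Brauer character takes values in characteristic zero (one lifts eigenvalues from $\ol{\bF}_\ell^\times$ back to roots of unity in $\ol{\bQ}_\ell^\times$), and for a mod $\ell$ reduction of an ordinary representation it simply agrees with the ordinary character on $\ell$-regular elements; so one does not even need to check nonvanishing after reduction mod $\ell$ --- the nonvanishing of $q^{n-1}G(\psi)$ as an algebraic number already suffices, which is how the paper argues.
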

\begin{proof}
It suffices to show that the characters $W_{n,\psi}[\nu]^+$ and $W_{n,\psi}[\nu]^-$ are distinct restricted to the subset consisting of $\ell$-regular elements of $\Sp_{2n}(\bF_q)$. 
The element $g_0$ in Proposition \ref{433} is of order $p$ and $\ell$-regular by $(p,\ell)=1$. 
Hence the claim follows from Proposition \ref{433}. 
\end{proof}

\subsection{Unipotent representation}

\begin{lem}\label{irr}
Assume that $n \geq 2$. 
Then the $\Sp_{2n}(\bF_q)$-representation  
$H_{\mathrm{c}}^{2n-1}(Y'_{2n,\overline{\mathbb{F}}_q},\ol{\bF}_{\ell})^-$ 
is irreducible. 
\end{lem}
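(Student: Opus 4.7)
The plan is to identify $H_{\mathrm{c}}^{2n-1}(Y'_{2n,\overline{\mathbb{F}}_q},\ol{\bF}_{\ell})^-$ as the mod $\ell$ reduction of the irreducible unipotent ordinary character $W_{n,\psi}[1]^-$, and then to pin down its irreducible Brauer constituents using small-degree results.

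First, I would apply \eqref{irr-2} with $\chi = 1$ to get
\[
H_{\mathrm{c}}^{2n-1}(Y'_{2n,\overline{\mathbb{F}}_q},\ol{\bF}_{\ell})^- \simeq H_{\mathrm{c}}^{2n-1}(Y'_{2n,\overline{\mathbb{F}}_q},\mathcal{O})^- \otimes_{\mathcal{O}} \ol{\bF}_{\ell},
\]
where the generic fiber of the $\mathcal{O}$-lattice is $W_{n,\psi}[1]^-$. By \eqref{eq:lsW} this generic fiber is an irreducible unipotent character in $\mathcal{E}(\mathit{Sp}_{2n},(s_1))$, and by Lemma \ref{lem:dim} its degree is $(q^n-1)(q^n-q)/(2(q+1))$. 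Hence the Brauer character we must show irreducible is $\overline{W_{n,\psi}[1]^-}$.

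Next, I would exclude the trivial representation as a subrepresentation: by Proposition \ref{sol} \ref{en:invY} we have $H_{\mathrm{c}}^{2n-1}(Y'_{2n,\overline{\mathbb{F}}_q},\ol{\bF}_{\ell})^{\mathit{Sp}_{2n}(\bF_q)} = 0$, and since the $-$-eigenspace of $\mathrm{Fr}_q$ is a subrepresentation, it too has no nonzero $\mathit{Sp}_{2n}(\bF_q)$-fixed vectors. Combined with Proposition \ref{sol} \ref{en:invY} computing the $U(\bF_q)^n$-invariants to be one dimensional, this strongly constrains where the trivial constituent can (and cannot) sit.

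Then I would control the remaining possible irreducible Brauer subcharacters via the lower bounds of \cite[Proposition 1]{HiMaLowuni}, combined with the structure of the unipotent $\ell$-blocks of $\mathit{Sp}_{2n}(\bF_q)$ from \cite{GMSTCross}. The aim is to show that any nontrivial irreducible Brauer subcharacter $\chi$ of $\overline{W_{n,\psi}[1]^-}$ must satisfy $\chi(1) \geq \dim W_{n,\psi}[1]^-$, so that $\chi = \overline{W_{n,\psi}[1]^-}$ and irreducibility follows. The case $\ell \nmid q+1$ should be direct from the triviality of the decomposition matrix on unipotent blocks. In the case $\ell \mid q+1$, one must separate the $+$- and $-$-parts and invoke the explicit extension structure of the $+$-part established in the main theorem.

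The main obstacle is the third step when $\ell \mid q+1$, where $\overline{W_{n,\psi}[1]^-}$ lies in the principal unipotent $\ell$-block together with the trivial representation, and the Hiss--Malle degree bounds do not immediately rule out intermediate Brauer characters. Here one likely needs to combine a Poincar\'{e}-duality-based analysis of quotients of $H_{\mathrm{c}}^{2n-1}(Y'_{2n,\overline{\mathbb{F}}_q},\ol{\bF}_{\ell})^-$ with the non-split extension structure of the $+$-part, in order to conclude that the $-$-part is semisimple and simple.
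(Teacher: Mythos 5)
Your opening step (identifying $H_{\mathrm{c}}^{2n-1}(Y'_{2n,\overline{\mathbb{F}}_q},\ol{\bF}_{\ell})^-$ with the mod $\ell$ reduction of $W_{n,\psi}[1]^-$ via \eqref{irr-2}) coincides exactly with the paper. However, the rest of your plan diverges in a way that contains genuine gaps. The paper's proof is in fact much shorter than you imagine: after the reduction step, it simply cites \cite[Corollary 7.4]{GMSTCross} when $p \neq 2$ and \cite[(6), Corollary 7.5]{GuTiCross} when $p = 2$, both of which directly assert that the rank-three permutation module constituent $\alpha_n = W_{n,\psi}[1]^-$ remains irreducible after mod $\ell$ reduction (for $\ell \neq 2$).

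Your proposed step three is where things break down. You want to use the Hiss--Malle lower bound \cite[Proposition 1]{HiMaLowuni} (as packaged in Lemma \ref{lem:lirr}) to force any nontrivial irreducible Brauer constituent to have degree at least $\dim W_{n,\psi}[1]^-$. But that bound is $\lvert G(\bF_q)\rvert_{p'}/\lvert C_{G^\ast(\bF_q)}(s)\rvert_{p'}$, and $W_{n,\psi}[1]^-$ is \emph{unipotent}, so $s = 1$ and the bound degenerates to $1$. It yields no information here. This is precisely why the paper applies Lemma \ref{lem:lirr} to the non-unipotent pieces $W_{n,\psi}[\chi]$ (in Proposition \ref{prop:nc1}) and not to $W_{n,\psi}[1]^\pm$. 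For the unipotent $+$-part, the paper instead combines \cite[Theorem 2.1]{GMSTCross} (classification of small-degree irreducibles) with Lemma \ref{weil} (block-theoretic exclusion of Weil modules) and Proposition \ref{sol} (to control trivial constituents); that toolkit could in principle be redeployed for the $-$-part, but you have not assembled that argument, and you yourself acknowledge you do not know how to close the $\ell \mid q+1$ case. Finally, your plan does not address $p = 2$ at all: both \cite[Theorem 2.1]{GMSTCross} and the Weil-module framework you invoke are specific to odd characteristic, whereas the paper relies on \cite{GuTiCross} for $p = 2$. As stated, your proposal has a missing core: either cite the known irreducibility result as the paper does, or give a complete self-contained argument that does not rest on a degree bound that vanishes in the unipotent case.
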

\begin{proof}
The $\Sp_{2n}(\bF_q)$-representation $W_{n}[1]^{-}$ 
is irreducible modulo $\ell$ by Lemma \ref{lem:dim} and 
\cite[(6), Corollary 7.5]{GuTiCross} if $p=2$ and 
\cite[Corollary 7.4]{GMSTCross} if $p \neq 2$. 
Hence 
the claim follows from \eqref{irr-2}. 
\end{proof}
Assume that $\ell \mid q+1$ and $n \geq 2$. 
By Proposition \ref{lp} \ref{en:KFd} and Lemma \ref{ccc0} \ref{en:evtri}, 
we have a short exact sequence 
\begin{equation}\label{ses}
0 \to 
H_{\mathrm{c}}^{2n-1}(Y'_{2n,\overline{\mathbb{F}}_q},\mathscr{K}_{\chi_{\ell^a}}) \otimes_{\mathcal{O}} \ol{\bF}_{\ell} \to 
H_{\mathrm{c}}^{2n-1}(Y'_{2n,\overline{\mathbb{F}}_q},\ol{\bF}_{\ell})  \xrightarrow{\delta} \mathbf{1} \to 0  
\end{equation}
for any non-trivial character $\chi_{\ell^a}$. 
By Lemma \ref{irr}, the restriction of $\delta$
to $H_{\mathrm{c}}^{2n-1}(Y'_{2n,\overline{\mathbb{F}}_q},\ol{\bF}_{\ell})^-$ is a zero map. 
We denote by $\delta^+$ the restriction of $\delta$ to
$H_{\mathrm{c}}^{2n-1}(Y'_{2n,\overline{\mathbb{F}}_q},\ol{\bF}_{\ell})^+$. 
Then we have 
a short exact sequence 
\begin{equation}\label{ses2}
0 \to \ker \delta^+ \to 
H_{\mathrm{c}}^{2n-1}(Y'_{2n,\overline{\mathbb{F}}_q},\ol{\bF}_{\ell})^+ \to \mathbf{1} \to 0. 
\end{equation}

\begin{prop}\label{sum}
Assume that $p=2$ and $n \geq 2$. 
If $\ell \nmid q+1$, the $\Sp_{2n}(\bF_q)$-representation $H_{\mathrm{c}}^{2n-1}(Y'_{2n,\overline{\mathbb{F}}_q},\ol{\bF}_{\ell})^+$ is irreducible. 

Assume $\ell \mid q+1$. 
Then, the $\Sp_{2n}(\bF_q)$-representation  
$\ker \delta^+$ is irreducible. 
The representation 
$H_{\mathrm{c}}^{2n-1}(Y'_{2n,\overline{\mathbb{F}}_q},\ol{\bF}_{\ell})^+$ is indecomposable of length two
with irreducible constituents 
$\mathbf{1}$ and $\ker \delta^+$.
\end{prop}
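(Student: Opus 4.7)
The plan is to split into the two cases $\ell \nmid q+1$ and $\ell \mid q+1$ and to parallel the minus-part analysis from Lemma~\ref{irr}. In both cases the essential input is the mod-$\ell$ composition structure of the ordinary plus Weil module $W_{n,\psi}[1]^+$ of $\mathit{Sp}_{2n}(\bF_q)$, obtained from \cite{GuTiCross} in the same spirit as the $p=2$ citation invoked there for $W_{n,\psi}[1]^-$.

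For $\ell \nmid q+1$, the short exact sequence \eqref{ses} does not intervene, so by the second isomorphism of \eqref{irr-2} the representation $H_{\mathrm{c}}^{2n-1}(Y'_{2n,\overline{\mathbb{F}}_q},\ol{\bF}_{\ell})^+$ is simply the mod-$\ell$ reduction of $W_{n,\psi}[1]^+$. I would invoke the $p=2$ counterpart of the cross-characteristic irreducibility result from \cite{GuTiCross} for the plus Weil module, together with the dimension formula in Lemma~\ref{lem:dim}, to conclude that this reduction is irreducible.

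For $\ell \mid q+1$, I would argue in two steps. First, to see that $\ker \delta^+$ is irreducible, I would combine \eqref{ses2} with the $\ell \mid q+1$ case of the same result, which should say that $\overline{W_{n,\psi}[1]^+}$ has precisely two irreducible Brauer constituents, one being the trivial character and the other of dimension $(q^n+1)(q^n+q)/(2(q+1))-1$. The second isomorphism of \eqref{irr-2} identifies the non-trivial constituent with the Brauer character of $\ker \delta^+$, forcing the latter to be irreducible. Second, to prove indecomposability, I would argue by contradiction: if \eqref{ses2} split, then the trivial representation $\mathbf{1}$ would be a direct summand of $H_{\mathrm{c}}^{2n-1}(Y'_{2n,\overline{\mathbb{F}}_q},\ol{\bF}_{\ell})^+$ and hence of $H_{\mathrm{c}}^{2n-1}(Y'_{2n,\overline{\mathbb{F}}_q},\ol{\bF}_{\ell})$ itself, yielding a nonzero $\mathit{Sp}_{2n}(\bF_q)$-invariant and contradicting Proposition~\ref{sol}~\ref{en:invY}. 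Hence \eqref{ses2} is non-split and $H_{\mathrm{c}}^{2n-1}(Y'_{2n,\overline{\mathbb{F}}_q},\ol{\bF}_{\ell})^+$ is a non-trivial extension of $\mathbf{1}$ by $\ker \delta^+$, as desired.

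The main technical obstacle is locating and correctly applying the plus-part analogue of \cite[(6), Corollary~7.5]{GuTiCross} in the $p=2$ setting, since the structure of $W_{n,\psi}[1]^+$ in characteristic $p=2$ is more delicate than in odd characteristic. Once that input is in hand, the rest of the proof is a clean assembly of \eqref{irr-2}, \eqref{ses2}, and the vanishing of $\mathit{Sp}_{2n}(\bF_q)$-invariants provided by Proposition~\ref{sol}~\ref{en:invY}.
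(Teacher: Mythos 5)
Your proposal is correct and takes essentially the same route as the paper: identify $H_{\mathrm{c}}^{2n-1}(Y'_{2n,\overline{\mathbb{F}}_q},\ol{\bF}_{\ell})^+$ with the mod-$\ell$ reduction of $W_{n,\psi}[1]^+$ via \eqref{irr-2} and Proposition \ref{lp}, invoke the Guralnick--Tiep result on the composition structure of the plus Weil module in even characteristic, and derive indecomposability from the vanishing of $\mathit{Sp}_{2n}(\bF_q)$-invariants in Proposition \ref{sol} \ref{en:invY}. The input you flag as the "main technical obstacle" is exactly what the paper supplies: it identifies $W_{n,\psi}[1]^+=\beta_n$ in the notation of \cite[Definition (6)]{GuTiCross} using Lemma \ref{lem:dim}, \cite[(4)]{GuTiCross}, and \cite[Lemma 4.1]{TiZaSomeW}, and then applies \cite[Corollary 7.5 (i)]{GuTiCross} (the same corollary that handles $\alpha_n$ in Lemma \ref{irr}, not a separate plus-part analogue), which gives the two-constituent decomposition when $\ell\mid q+1$ and irreducibility when $\ell\nmid q+1$.
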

\begin{proof}
We have 
\[
 W_{n,\psi}[1]^-=\alpha_n, \quad W_{n,\psi}[1]^+=\beta_n 
\]
in the notation of \cite[Definition (6)]{GuTiCross}
by Lemma \ref{lem:dim}, 
\cite[(4)]{GuTiCross} 
and \cite[Lemma 4.1]{TiZaSomeW}. 

Assume $\ell \nmid q+1$. By Proposition \ref{lp} \ref{en:KFnd}, 
we have 
\[
H_{\mathrm{c}}^{2n-1}(Y'_{2n,\overline{\mathbb{F}}_q},\mathcal{O}) \otimes_{\mathcal{O}} \ol{\bF}_{\ell} \simeq 
H_{\mathrm{c}}^{2n-1}(Y'_{2n,\overline{\mathbb{F}}_q},\ol{\bF}_{\ell}). 
\]
The representation 
$H_{\mathrm{c}}^{2n-1}(Y'_{2n,\overline{\mathbb{F}}_q},\ol{\bF}_{\ell})^+$ is irreducible by \eqref{irr-2} and 
\cite[Corollary 7.5 (i)]{GuTiCross}. 

Assume $\ell \mid q+1$. 
By  \eqref{irr-2} and \cite[Corollary 7.5 (i)]{GuTiCross}, 
$H_{\mathrm{c}}^{2n-1}(Y'_{2n,\overline{\mathbb{F}}_q},\ol{\bF}_{\ell})^+$ 
has two irreducible constituents. 
Hence, $\ker \delta^+$ is irreducible by \eqref{ses2}. 

We show that 
$H_{\mathrm{c}}^{2n-1}(Y'_{2n,\overline{\mathbb{F}}_q},\ol{\bF}_{\ell})^+$ 
is indecomposable. 
Assume that it is not so. 
Then it is completely reducible, and is isomorphic to a direct sum of 
$\ker \delta^+$ and $\mathbf{1}$ 
by the Jordan--H\"{o}lder theorem. 
This is contrary to  Proposition \ref{sol} \ref{en:invY}. 
Hence, we obtain the claim. 
\end{proof}

\begin{prop}\label{pl}
Assume that $n \geq 2$, $p \neq 2$ and $\ell \mid q+1$. 
The $\Sp_{2n}(\bF_q)$-representation $\ker \delta^+$
is irreducible. 
The $\Sp_{2n}(\bF_q)$-representation 
$H_{\mathrm{c}}^{2n-1}(Y'_{2n,\overline{\mathbb{F}}_q},\ol{\bF}_{\ell})^+$ is indecomposable of length two with irreducible constituents $\mathbf{1}$
and $\ker \delta^+$. 
\end{prop}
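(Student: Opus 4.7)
I would mimic the structure of the proof of Proposition \ref{sum}, replacing the references to \cite{GuTiCross} (valid for $p=2$) by corresponding statements that hold for odd $p$. Throughout, the isomorphism \eqref{irr-2} lets me pass between the integral cohomology and its mod $\ell$ reduction, and the short exact sequence \eqref{ses2} then becomes the main organizing tool.

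First, I would pin down the $\mathit{Sp}_{2n}(\bF_q)$-character $W_{n,\psi}[1]^{+}$: by \cite[Proposition 6.10]{ITGeomHW}, it lies in the unipotent Lusztig series $\cE(\mathit{Sp}_{2n},(1))$, and by Lemma \ref{lem:dim} its dimension equals $(q^n+1)(q^n+q)/(2(q+1))$. This identifies it with a specific unipotent character of $\mathit{Sp}_{2n}(\bF_q)$. I would then invoke the $p$-odd analog of \cite[Corollary 7.5 (i)]{GuTiCross} from \cite{GMSTCross} (already used for the ``minus'' side in the proof of Lemma \ref{irr}) to conclude that the reduction $\overline{W_{n,\psi}[1]^{+}}$ has exactly two irreducible constituents when $\ell \mid q+1$, one of which is the trivial representation. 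Via \eqref{irr-2}, the $\mathit{Sp}_{2n}(\bF_q)$-module $H^{2n-1}_{\mathrm{c}}(Y'_{2n,\overline{\mathbb{F}}_q},\ol{\bF}_{\ell})^{+}$ inherits this constituent count.

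Combining this with the surjection $\delta^{+}$ in \eqref{ses2}, I would deduce that $\ker\delta^{+}$ is irreducible: it is the nonzero kernel of a surjection onto $\mathbf{1}$, so by the Jordan--H\"older data just obtained it can have only one constituent.

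Finally, for the indecomposability I would argue by contradiction. If $H^{2n-1}_{\mathrm{c}}(Y'_{2n,\overline{\mathbb{F}}_q},\ol{\bF}_{\ell})^{+}$ were completely reducible, then by its Jordan--H\"older structure it would split as $\ker\delta^{+}\oplus\mathbf{1}$, producing a direct summand isomorphic to the trivial representation. Enlarging $\cO$ if necessary so that $\bF$ already carries the relevant characters, this yields a nonzero $\mathit{Sp}_{2n}(\bF_q)$-invariant in $H^{2n-1}_{\mathrm{c}}(Y'_{2n,\overline{\mathbb{F}}_q},\bF)$, contradicting Proposition \ref{sol} \ref{en:invY}. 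The main obstacle is the first step: securing the precise mod-$\ell$ constituent count for the unipotent character $W_{n,\psi}[1]^{+}$ of $\mathit{Sp}_{2n}(\bF_q)$ at odd $p$ from \cite{GMSTCross} (or a companion such as \cite{HiMaLowuni}); once that ingredient is in place the rest of the proof is a formal transcription of the $p=2$ argument.
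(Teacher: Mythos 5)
Your proposal and the paper's proof agree on the indecomposability step (both obtain it from the non-vanishing of the invariant subspace, Proposition \ref{sol} \ref{en:invY}), but diverge fundamentally on the irreducibility of $\ker \delta^{+}$, and the divergence is where the gap lies.

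You propose to import a direct statement that $\overline{W_{n,\psi}[1]^{+}}$ has exactly two irreducible constituents when $\ell \mid q+1$, one of them trivial, as the odd-$p$ counterpart of \cite[Corollary 7.5 (i)]{GuTiCross}. You flag this yourself as the main obstacle, and it really is: nothing in \cite{GMSTCross} that the paper cites supplies this (the only constituent statement used from \cite{GMSTCross} is \cite[Corollary 7.4]{GMSTCross}, which handles the \emph{minus} part, i.e.\ irreducibility of $\overline{W_{n,\psi}[1]^{-}}$, cited in Lemma \ref{irr}). The authors' choice to argue differently here is itself evidence that the direct statement was not available to them. So as written, the first half of your argument rests on an unsecured citation.

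The paper instead argues by contradiction without ever pinning down the constituent count of $\overline{W_{n,\psi}[1]^{+}}$ in advance. Suppose $\ker\delta^{+}$ were reducible. First, taking $U(\bF_q)^n$-invariants (exact, since $U(\bF_q)^n$ is a $p$-group and $\ell \neq p$) in \eqref{ses2} and using Proposition \ref{sol} \ref{en:invY} shows $\dim(\ker\delta^{+})^{U(\bF_q)^n}=0$, so no constituent of $\ker\delta^{+}$ is trivial. Second, from the dimension computation $\dim\ker\delta^{+}=m+q^n-1<2m$ with $m=(q^n-q)(q^n-1)/(2(q+1))$ (here one needs $q>3$, which follows from $p\neq 2$, $\ell\neq 2$, $\ell\mid q+1$), some constituent $\beta$ has dimension $<m$, and the low-dimensional classification \cite[Theorem 2.1]{GMSTCross} forces $\beta$ to be trivial or a Weil module; by the invariant count, $\beta$ must be a Weil module. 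Third, since $H_{\mathrm{c}}^{2n-1}(Y'_{2n,\overline{\mathbb{F}}_q},\overline{\mathbb{Q}}_{\ell})^{+}$ is unipotent (via \eqref{eq:X'Lpsi}, Lemma \ref{tl2}, and \cite[Corollary 6.12]{ITGeomHW}), $\ker\delta^{+}$ lies in a unipotent $\ell$-block, and Lemma \ref{weil} says Weil modules do not, a contradiction. That block-theoretic step is the piece your proposal is missing; it is precisely what substitutes for the unavailable constituent count.
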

\begin{proof}
Let $U$ be as in \S \ref{ssec:Inv}. 
Since $U(\bF_q)^n$ is a $p$-group and $\ell \neq p$, 
any $U(\bF_q)^n$-representation over $\ol{\bF}_{\ell}$ is semisimple. 
By Proposition \ref{sol} \ref{en:invY} and \eqref{ses2}, 
we have 
\begin{equation}\label{plus}
\dim 
(H_{\mathrm{c}}^{2n-1}(Y'_{2n,\overline{\mathbb{F}}_q},\ol{\bF}_{\ell})^+)^{U(\bF_q)^n}=1. 
\end{equation}

We set $m=(q^n-q)(q^n-1)/(2(q+1))$. 
We assume that $\ker \delta^+$ has more than one irreducible constituents. 
By the assumption $p \neq 2$, $\ell \neq 2$ and 
$\ell \mid q+1$, we have $q>3$. 
We have $\dim \ker \delta^+=m+q^n-1<2m$.  
Hence we can take an irreducible constituent of 
$\ker \delta^+$ whose dimension is less than $m$, for which we write 
$\beta$. 
By \eqref{ses2} and \eqref{plus}, we have 
$\dim \beta^{U(\bF_q)^n}=0$. 
Hence, $\beta$ is not a trivial representation of $\Sp_{2n}(\bF_q)$. 
By \cite[Theorem 2.1]{GMSTCross}, $\beta$ must be a Weil module.  
The $\Sp_{2n}(\bF_q)$-representation 
$H_{\mathrm{c}}^{2n-1}(Y'_{2n,\overline{\mathbb{F}}_q},\overline{\mathbb{Q}}_{\ell})^+
$ is unipotent by \eqref{eq:X'Lpsi}, 
Lemma \ref{tl2} and \cite[Corollary 7.13]{ITGeomHW}. 
Hence $\ker \delta^+$ belongs 
to a unipotent block. 
Since $\beta$ and $\ker \delta^+$ belong to the same block,  
this is contrary to Lemma \ref{weil}. 
Hence, we obtain the first claim. 

By \eqref{ses2} and the first claim, 
$H_{\mathrm{c}}^{2n-1}(Y'_{2n,\overline{\mathbb{F}}_q},\ol{\bF}_{\ell})^+$
has length two. The sequence \eqref{ses2} is non-split 
by Proposition \ref{sol} \ref{en:invY}. 
Hence the second claim follows.  
\end{proof}

\begin{lem}\label{canon}
Let 
$\psi \in \Hom (\mathbb{F}_{q,+},\mathbb{F}^{\times}) \setminus \{1\}$. 
The canonical map 
\[
H_{\mathrm{c}}^n(\mathbb{A}^n,\pi^\ast \mathscr{L}_{\psi})
\to 
H^n(\mathbb{A}^n,\pi^\ast \mathscr{L}_{\psi})
\]
is an isomorphism. 
\end{lem}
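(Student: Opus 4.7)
The plan is to apply a K\"unneth decomposition to reduce to the one-dimensional case, and then use wild ramification at infinity.

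First, I would factor $\pi$ as the composition $\mathbb{A}^n \xrightarrow{\rho^{\times n}} \mathbb{A}^n \xrightarrow{\mathrm{sum}} \mathbb{A}^1$ with $\rho(x)=x^{q+1}$, and exploit the additivity of Artin--Schreier sheaves: $\mathrm{sum}^{\ast}\mathscr{L}_{\psi}\simeq \mathscr{L}_{\psi}^{\boxtimes n}$ on $\mathbb{A}^n=(\mathbb{A}^1)^n$, hence $\pi^{\ast}\mathscr{L}_{\psi}\simeq (\rho^{\ast}\mathscr{L}_{\psi})^{\boxtimes n}$. The K\"unneth formula then identifies the graded $\mathbb{F}$-vector space $H_{\mathrm{c}}^{\bullet}(\mathbb{A}^n,\pi^{\ast}\mathscr{L}_{\psi})$ with the $n$-fold graded tensor power of $H_{\mathrm{c}}^{\bullet}(\mathbb{A}^1,\rho^{\ast}\mathscr{L}_{\psi})$, and similarly for $H^{\bullet}$; moreover the forget-supports map on $\mathbb{A}^n$ is identified with the $n$-fold tensor product of the forget-supports maps on $\mathbb{A}^1$. (For $H^{\bullet}$ the K\"unneth formula applies because each $H^i(\mathbb{A}^1,\rho^{\ast}\mathscr{L}_{\psi})$ is finite-dimensional, bounded above by Artin vanishing.) This reduces the problem to showing that the forget-supports map $H_{\mathrm{c}}^i(\mathbb{A}^1,\rho^{\ast}\mathscr{L}_{\psi})\to H^i(\mathbb{A}^1,\rho^{\ast}\mathscr{L}_{\psi})$ is an isomorphism in every degree.

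For $n=1$ I would observe that $\rho^{\ast}\mathscr{L}_{\psi}$ is the rank-one lisse $\mathbb{F}$-sheaf associated to the Artin--Schreier cover $z^q+z=x^{q+1}$ and the character $\psi$. At $\infty\in \mathbb{P}^1\setminus \mathbb{A}^1$, the function $x^{q+1}$ has a pole of order $q+1$, which is coprime to $p$, so $\rho^{\ast}\mathscr{L}_{\psi}$ is totally wildly ramified at $\infty$ with Swan conductor $q+1$. I would then show $i_{\infty}^{\ast}Rj_{\ast}\rho^{\ast}\mathscr{L}_{\psi}=0$ for the open immersion $j\colon \mathbb{A}^1\hookrightarrow \mathbb{P}^1$ and closed immersion $i_{\infty}\colon \{\infty\}\hookrightarrow \mathbb{P}^1$, by running the Hochschild--Serre spectral sequence for $P_{\infty}\trianglelefteq I_{\infty}$: because $P_{\infty}$ is pro-$p$ and acts non-trivially on a one-dimensional $\mathbb{F}$-module with $\ell\neq p$, both $H^0(P_{\infty},\rho^{\ast}\mathscr{L}_{\psi})$ and $H^b(P_{\infty},\rho^{\ast}\mathscr{L}_{\psi})$ for $b>0$ vanish, so the whole $E_2$-page is zero. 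The long exact sequence
\[
 \cdots \to H_{\mathrm{c}}^i(\mathbb{A}^1,\rho^{\ast}\mathscr{L}_{\psi}) \to H^i(\mathbb{A}^1,\rho^{\ast}\mathscr{L}_{\psi}) \to H^i(\{\infty\},i_{\infty}^{\ast}Rj_{\ast}\rho^{\ast}\mathscr{L}_{\psi}) \to H_{\mathrm{c}}^{i+1}(\mathbb{A}^1,\rho^{\ast}\mathscr{L}_{\psi}) \to \cdots
\]
then forces the forget-supports map to be an isomorphism in every degree, completing the proof together with Step 1.

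The main obstacle will be carefully verifying the wild-ramification vanishing in the mod $\ell$ setting: the additive character $\psi$ takes values in $p$-power roots of unity, which lie in $\mathbb{F}^{\times}$ because $p\neq \ell$, and one must confirm that the character it induces on the wild inertia is genuinely non-trivial (so that $H^0(P_{\infty},\rho^{\ast}\mathscr{L}_{\psi})=0$). Beyond this, the additivity of Artin--Schreier sheaves over $\mathbb{F}$ and the compatibility of K\"unneth with the forget-supports map are standard.
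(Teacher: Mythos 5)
Your Künneth reduction to the one–variable case is exactly what the paper does, but your treatment of the one–variable case differs. The paper observes that $H^i_{\mathrm{c}}(\mathbb{A}^1,\rho^*\mathscr{L}_\psi)=H^i_{\mathrm{c}}(C,\mathbb{F})[\psi]$, where $C$ is the Hermitian curve $z^q+z=t^{q+1}$, and then uses the fact that $C$ has the smooth plane compactification $\overline{C}\colon X^qY+XY^q=Z^{q+1}$ in $\mathbb{P}^2$ with $\overline{C}\setminus C$ a single $\mathbb{F}_q$-rational point; comparing the long exact sequences for $j_!\mathbb{F}\to\mathbb{F}_{\overline{C}}\to i_*\mathbb{F}$ and for local cohomology at that point forces the forget-supports map $H^1_{\mathrm{c}}(C,\mathbb{F})\to H^1(C,\mathbb{F})$ to be an isomorphism, and one then passes to $\psi$-isotypic parts. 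You instead stay on $\mathbb{A}^1$ and argue that $\rho^*\mathscr{L}_\psi$ is totally wildly ramified at $\infty$ (Swan conductor $q+1$, prime to $p$, since pulling back $\mathscr{L}_\psi$ along the tame cover $x\mapsto x^{q+1}$ multiplies the Swan conductor by the tame degree), so that $i_\infty^*Rj_*\rho^*\mathscr{L}_\psi=0$ and $j_!\to Rj_*$ is a quasi-isomorphism. Both routes are valid. The paper's argument is more elementary and leans on the coincidence that $C$ compactifies smoothly with a single boundary point; yours is more intrinsic and would apply unchanged to any Artin–Schreier–type sheaf that is totally wild at every point of the boundary, without needing to exhibit an explicit smooth compactification. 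Your last paragraph correctly flags the one thing that needs checking — that wild inertia acts genuinely nontrivially mod $\ell$ — and this is fine precisely because $\ell\neq p$ so $p$-power roots of unity inject into $\mathbb{F}^{\times}$.
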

\begin{proof}
Let $C$ be the affine curve over $\mathbb{F}_q$ defined by $z^q+z=t^{q+1}$ 
in $\mathbb{A}^2_{\mathbb{F}_q}$. 
We have $H_{\mathrm{c}}^1(C_{\overline{\mathbb{F}}_q},\mathbb{F})[\psi] 
\simeq H_{\mathrm{c}}^1(\mathbb{A}^1,\mathscr{L}_{\psi})$. 
Hence by the K\"{u}nneth formula, 
we have 
\[
H_{\mathrm{c}}^n(\mathbb{A}^n,\pi^\ast \mathscr{L}_{\psi}) \simeq (H_{\mathrm{c}}^1(C_{\overline{\mathbb{F}}_q},\mathbb{F})[\psi])^{\otimes n}, \quad 
H^n(\mathbb{A}^n,\pi^\ast \mathscr{L}_{\psi}) \simeq (H^1(C_{\overline{\mathbb{F}}_q},\mathbb{F})[\psi])^{\otimes n}. 
\]
Hence it suffices to show 
that the canonical map 
$H_{\mathrm{c}}^1(C_{\overline{\mathbb{F}}_q},\mathbb{F}) \to H^1(C_{\overline{\mathbb{F}}_q},\mathbb{F})$
is an isomorphism. 
The curve $C$ has the smooth compactification 
$\overline{C}$ defined by $X^qY+XY^q=Z^{q+1}$
in $\mathbb{P}_{\mathbb{F}_q}^2$. 
The complement 
$\overline{C} \setminus C$ consists of an 
$\mathbb{F}_q$-valued point.  
Hence, the claim follows.  
\end{proof}

\begin{lem}\label{sd}
Let 
$\psi \in \Hom (\mathbb{F}_q,\mathbb{F}^{\times}) \setminus \{1\}$. 
Then 
$H_{\mathrm{c}}^{2n}(\mathbb{A}^{2n},\pi'^\ast \mathscr{L}_{\psi})[1]$ 
is a self-dual representation of $\Sp_{2n}(\bF_q)$. 
\end{lem}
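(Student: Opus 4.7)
The plan is to combine Poincar\'{e} duality with an analog of Lemma \ref{canon} and an equivariant rescaling trick.

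First, since $\mathbb{A}^{2n}$ is smooth of dimension $2n$ and the dual of $\pi'^\ast \mathscr{L}_\psi$ is $\pi'^\ast \mathscr{L}_{\psi^{-1}}$, Poincar\'{e} duality yields an $\mathit{Sp}_{2n}(\bF_q)$-equivariant isomorphism
\[
 H_{\mathrm{c}}^{2n}(\mathbb{A}^{2n},\pi'^\ast \mathscr{L}_\psi)^\vee
 \simeq
 H^{2n}(\mathbb{A}^{2n},\pi'^\ast \mathscr{L}_{\psi^{-1}})(2n).
\]
The scalar action of $\mu_{q+1}$ on $\mathbb{A}^{2n}$ preserves $\pi'$ and commutes with the $\mathbb{F}_q$-linear $\mathit{Sp}_{2n}(\bF_q)$-action, so this isomorphism respects the $\mu_{q+1}$-structure and restricts to the $[1]$-parts.

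Next, I would establish an analog of Lemma \ref{canon} for $\pi'$, asserting that the canonical map
\[
 H_{\mathrm{c}}^{2n}(\mathbb{A}^{2n},\pi'^\ast \mathscr{L}_{\psi^{-1}})
 \to
 H^{2n}(\mathbb{A}^{2n},\pi'^\ast \mathscr{L}_{\psi^{-1}})
\]
is an isomorphism. Applying the K\"{u}nneth formula to the decomposition $\pi' = \pi'_1 + \cdots + \pi'_n$ with $\pi'_i(x_i,y_i)=x_i y_i^q - x_i^q y_i$ reduces this to the case $n=1$. For $n=1$, one reformulates in terms of the Artin--Schreier cover $X$ and its smooth compactification $\overline{X}$ in $\mathbb{P}^3$ already introduced in \S \ref{try0}, and analyzes the boundary $\overline{X}\setminus X$ to show that its contribution to the long exact sequence comparing compact and non-compact cohomology vanishes after taking the $\psi$-isotypic part.

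Finally, to relate $\mathscr{L}_{\psi^{-1}}$ back to $\mathscr{L}_\psi$: if $p=2$ then $\psi$ takes values in $\{\pm 1\}$, so $\psi^{-1}=\psi$ and there is nothing to prove. If $p\ne 2$, fix $\lambda\in\mathbb{F}_{q^2}^\times$ with $\lambda^{q+1}=-1$; such $\lambda$ exists since the $(q+1)$-th power map $\mathbb{F}_{q^2}^\times \to \mathbb{F}_q^\times$ is surjective. Scalar multiplication $[\lambda]$ on $\mathbb{A}^{2n}_{\overline{\mathbb{F}}_q}$ satisfies $\pi'\circ[\lambda]=-\pi'$ and commutes with the $\mathit{Sp}_{2n}(\bF_q)$-action (which is through $\mathbb{F}_q$-linear matrices), so $[\lambda]^\ast$ yields an $\mathit{Sp}_{2n}(\bF_q)$-equivariant isomorphism
\[
 H_{\mathrm{c}}^{2n}(\mathbb{A}^{2n},\pi'^\ast \mathscr{L}_{\psi^{-1}})
 \simeq
 H_{\mathrm{c}}^{2n}(\mathbb{A}^{2n},\pi'^\ast \mathscr{L}_\psi)
\]
compatible with the $\mu_{q+1}$-actions; combining with the previous two steps and restricting to $[1]$-parts yields the desired self-duality.

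The main obstacle will be the second step. In Lemma \ref{canon} the boundary of the compactification was a single $\mathbb{F}_q$-rational point and the long exact sequence argument was immediate. In the two-dimensional setting here, the boundary divisor $\overline{X}\setminus X$ is a configuration of $q+1$ lines meeting at a common point (as described in \S \ref{try0}), and the vanishing of its contribution to the middle cohomology after passing to the $\psi$-isotypic part must be extracted more carefully, presumably from the wild ramification of $\mathscr{L}_\psi$ along each component of the boundary.
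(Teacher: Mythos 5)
Your overall plan—Poincar\'e duality, a comparison isomorphism $H_{\mathrm{c}}^{2n}\simeq H^{2n}$, and a device relating $\psi$ to $\psi^{-1}$—is the same skeleton as the paper's proof, and your rescaling trick in step 3 is a valid and clean way to pass from $\mathscr{L}_{\psi^{-1}}$ to $\mathscr{L}_{\psi}$. However, step 2 is where you deviate from the paper, and it is also where your argument has a genuine gap that you yourself acknowledge.

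The paper does \emph{not} establish an analog of Lemma \ref{canon} for $\pi'$ by analyzing the compactification of the symplectic Artin--Schreier variety directly. Instead it invokes \eqref{XLpsi} and \eqref{eq:X'Lpsi} together with \cite[Remark 2.2]{ITGeomHW}, which provides an isomorphism over $\overline{\mathbb{F}}_q$ between $X'_{2n}$ and $X_{2n}$ compatible with the relevant group actions; this transports the problem from the symplectic form $\pi'$ to the hermitian form $\pi$, where Lemma \ref{canon} as stated applies, and where the compactification $\overline{C}$ of the underlying curve has a single $\mathbb{F}_q$-rational boundary point, making the long exact sequence comparison immediate. Your proposal to work directly with $\pi'$ is \emph{a priori} viable, but you leave it as an obstacle rather than resolving it. The observation that would make your route go through is that the additive group $\mathbb{F}_q$ acts on $\overline{X}$ by $Z_2\mapsto Z_2+\eta Z_3$, which is the \emph{identity} on the boundary $D=\overline{X}\cap\{Z_3=0\}$; hence $H^\ast(D,\Lambda)[\psi]=0$ for $\psi\neq 1$, so after taking $\psi$-isotypic parts the long exact sequence of the pair $(\overline X,D)$ gives $H_{\mathrm{c}}^i(X)[\psi]\simeq H^i(\overline X)[\psi]$, and a duality argument (using Poincar\'e duality on both $X$ and $\overline X$ together with the comparison just obtained) then identifies this with $H^i(X)[\psi]$, with the composite being the forget-supports map. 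You neither make this observation nor carry out the verification, so as written the proof is incomplete at this step. The paper's route sidesteps the singular $(q+1)$-line boundary entirely and is therefore considerably lighter.

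One further caveat, which applies equally to your proposal and to the paper's terse statement: passing from self-duality of $H_{\mathrm{c}}^{2n}(\mathbb{A}^{2n},\pi'^\ast\mathscr{L}_\psi)$ to self-duality of its $[1]$-part requires that taking $\mu_{q+1}$-invariants commute with taking duals, which holds when $\mu_{q+1}$ acts semisimply, i.e.\ when $\ell\nmid q+1$. This hypothesis is in force in the place where Lemma \ref{sd} is applied (Proposition \ref{canon2}), but it is worth flagging when restricting to the $[1]$-part.
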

\begin{proof}
By Poincar\'{e} duality, 
we have an isomorphism 
\[
H^{2n}(\mathbb{A}^{2n},\pi'^\ast \mathscr{L}_{\psi}) \simeq 
H_{\mathrm{c}}^{2n}(\mathbb{A}^{2n},\pi'^\ast \mathscr{L}_{\psi^{-1}})^{\vee}. 
\] 
Hence the claim follows from 
\eqref{XLpsi}, \eqref{eq:X'Lpsi}, Lemma \ref{canon} and 
\cite[Remark 3.2]{ITGeomHW}. 
\end{proof}

\begin{prop}\label{canon2}
Assume that $n \geq 2$, $p \neq 2$ and $\ell \nmid q+1$. 
\begin{enumerate}
\item The $\Sp_{2n}(\bF_q)$-representation 
$H_{\mathrm{c}}^{2n-1}(Y'_{2n,\overline{\mathbb{F}}_q},\ol{\bF}_{\ell})^+$ is irreducible. 
\item For each $\kappa \in \{\pm\}$, 
the $\Sp_{2n}(\bF_q)$-representation 
$H_{\mathrm{c}}^{2n-1}(Y'_{2n,\overline{\mathbb{F}}_q},\ol{\bF}_{\ell})^{\kappa}$ is self-dual. 
\end{enumerate}
\end{prop}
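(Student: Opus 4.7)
The plan is to handle part~(1) by paralleling the $\ell \nmid q+1$ half of Proposition~\ref{sum}, and to deduce part~(2) from the self-duality established in Lemma~\ref{sd} combined with the irreducibility of both Frobenius eigenspaces.

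For part~(1), since $\ell \nmid q+1$, Proposition~\ref{lp}~\ref{en:KFnd} applied with $n$ replaced by $2n$ (via $Y_{2n} \simeq Y'_{2n,\bF_{q^2}}$) yields an $\mathit{Sp}_{2n}(\bF_q) \times \langle \mathrm{Fr}_q \rangle$-equivariant isomorphism
\[
 H_{\mathrm{c}}^{2n-1}(Y'_{2n,\overline{\mathbb{F}}_q},\mathcal{O})
 \otimes_{\mathcal{O}} \ol{\bF}_{\ell}
 \xrightarrow{\sim}
 H_{\mathrm{c}}^{2n-1}(Y'_{2n,\overline{\mathbb{F}}_q},\ol{\bF}_{\ell}).
\]
Restricting to the $+$-eigenspace of $\mathrm{Fr}_q$ and combining with \eqref{irr-2}, one identifies $H_{\mathrm{c}}^{2n-1}(Y'_{2n,\overline{\mathbb{F}}_q},\ol{\bF}_{\ell})^+$ with the mod~$\ell$ reduction of $W_{n,\psi}[1]^+$. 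Using the dimension formula of Lemma~\ref{lem:dim} and the matching already used in the proof of Proposition~\ref{sum}, the representation $W_{n,\psi}[1]^+$ coincides with the Weil module $\beta_n$ in the notation of \cite{GMSTCross}. The irreducibility of its mod~$\ell$ reduction in the regime $p \neq 2$, $\ell \nmid q+1$ is then the relevant case of \cite[Corollary 7.4]{GMSTCross}, yielding~(1).

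For part~(2), Lemma~\ref{sd} gives self-duality of $H_{\mathrm{c}}^{2n}(\mathbb{A}^{2n},\pi'^{\ast}\mathscr{L}_{\psi})[1]$ as an $\mathit{Sp}_{2n}(\bF_q)$-representation, and Lemma~\ref{tl2} (applicable because $q+1$ is invertible in $\ol{\bF}_{\ell}$) identifies this, up to a Tate twist that affects only the Frobenius action, with $V := H_{\mathrm{c}}^{2n-1}(Y'_{2n,\overline{\mathbb{F}}_q},\ol{\bF}_{\ell})$. Hence $V \simeq V^{\vee}$ as $\mathit{Sp}_{2n}(\bF_q)$-representations. Write $V = V^+ \oplus V^-$; both summands are irreducible by part~(1) and Lemma~\ref{irr}. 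Their dimensions, computed from \eqref{irr-2} and Lemma~\ref{lem:dim}, satisfy
\[
 \dim V^+ - \dim V^-
 = \frac{(q^n+1)(q^n+q) - (q^n-1)(q^n-q)}{2(q+1)} = q^n \neq 0,
\]
so the two irreducible constituents of $V$ have distinct dimensions. A Jordan--H\"older comparison of the decompositions of $V$ and $V^{\vee}$ then forces $(V^{\kappa})^{\vee} \simeq V^{\kappa}$ for each $\kappa \in \{\pm\}$, giving~(2).

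The main obstacle is the identification in part~(1) of the integral representation $W_{n,\psi}[1]^+$ with the Weil module $\beta_n$ of \cite{GMSTCross}, which requires tracking sign and dimension conventions across \cite{ITGeomHW}, \cite{GuTiCross} and \cite{GMSTCross}; once this is in place, the cited mod~$\ell$ irreducibility result is immediate, and the self-duality argument in part~(2) is essentially formal, hinging only on the distinct-dimension separation of $V^+$ and $V^-$.
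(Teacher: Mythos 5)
Your part~(2) argument is essentially the paper's: combine the self-duality of $V = V^+ \oplus V^-$ from Lemmas~\ref{tl2} and \ref{sd} with the irreducibility of both summands, and observe that the dimension gap $q^n$ forces each summand to be self-dual. That is fine.

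Part~(1), however, rests on a citation the paper does not make and that its own structure argues against. You invoke \cite[Corollary 7.4]{GMSTCross} to get irreducibility of $\overline{W_{n,\psi}[1]^+}$ directly. In the paper, \cite[Corollary 7.4]{GMSTCross} is used only for the \emph{minus} part $W_{n,\psi}[1]^- = \alpha_n$ (Lemma~\ref{irr}), and that result holds for \emph{all} $\ell \neq p$ — whereas the plus part is provably \emph{not} irreducible mod~$\ell$ when $\ell \mid q+1$, so whatever Corollary~7.4 says cannot cover $\beta_n$ unconditionally. For $p=2$ the authors can and do cite \cite[Corollary 7.5(i)]{GuTiCross} to get both the $\ell\nmid q+1$ irreducibility and the $\ell\mid q+1$ length-two statement for the plus part (proof of Proposition~\ref{sum}); for $p\neq 2$ they conspicuously do not have an analogous citation and instead must prove Propositions~\ref{pl} and~\ref{canon2}(1) from scratch. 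The paper's actual proof of~(1) is substantially longer than a citation: it treats $(n,q)=(2,3)$ separately by Brauer--Nesbitt, then for $(n,q)\neq(2,3)$ assumes reducibility, extracts a constituent $\beta$ of dimension $<(q^n-q)(q^n-1)/(2(q+1))$, uses the low-dimensional classification \cite[Theorem 2.1]{GMSTCross} plus Lemma~\ref{weil} (block separation) to force $\beta \simeq \mathbf{1}$, bounds the trivial multiplicity via Proposition~\ref{sol}\ref{en:invY}, concludes length two with a unique non-split surjection onto $\mathbf{1}$, and finally derives a contradiction from the self-duality of $W = W^+ \oplus W^-$ (Lemma~\ref{sd}) together with the vanishing of $W^{\mathit{Sp}_{2n}(\bF_q)}$. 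You need either to verify that \cite[Corollary 7.4]{GMSTCross} genuinely contains the $\ell\nmid q+1$ irreducibility of $\beta_n$ in odd characteristic — which the paper's authors evidently did not find usable — or to reproduce this classification-plus-self-duality argument. A secondary issue: the identification $W_{n,\psi}[1]^+ = \beta_n$ is established in the paper only for $p=2$ (proof of Proposition~\ref{sum}, in the notation of \cite{GuTiCross}); carrying the same identification over to \cite{GMSTCross} for $p\neq 2$ requires its own justification, which you have not supplied. Also note that $\beta_n$ is \emph{not} a ``Weil module'' in the paper's sense — that term is reserved for the degree $(q^n\pm 1)/2$ modules $\overline{\omega_{\psi,\pm}}$ — so the phrase ``the Weil module $\beta_n$'' conflates two distinct objects.
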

\begin{proof}
For $\kappa \in \{\pm\}$, we simply write 
$W^{\kappa}$ for $H_{\mathrm{c}}^{2n-1}(Y'_{2n,\overline{\mathbb{F}}_q},\ol{\bF}_{\ell})^{\kappa}$.

We show the first claim. 
Assume  $(n,q)=(2,3)$. 
We have $\lvert \Sp_4(\bF_3) \rvert =2^7\cdot 3^4 \cdot 5$ and 
$\dim W^+=15$ by Lemma \ref{lem:dim}. 
By the assumption, we have $\ell \neq 2,3$. 
Hence, the claim in this case follows from 
the Brauer--Nesbitt theorem. 

Assume  $(n,q) \neq (2,3)$. 
Let $m$ be as in the proof of Proposition \ref{pl}. 
Assume that $W^+$ is not irreducible. 
By $(n,q) \neq (2,3)$, we can take an irreducible component 
$\beta$ of $W^+$ whose dimension is less than 
$m$. 
Then $\beta$ is a trivial module or a Weil module
by \cite[Theorem 2.1]{GMSTCross}. 
We know that $\beta$ is a trivial module 
by Lemma \ref{weil}. 
By the last isomorphism in 
Proposition \ref{sol} \ref{en:invY}, $W^+$ has at most one 
trivial module as irreducible constituents. 
Hence, $W^+$ must have length two by a similar argument as above.  
By $(W^+)^{\Sp_{2n}(\bF_q)}=0$ as in Proposition \ref{sol} \ref{en:invY}, 
we have a non-split surjective homomorphism 
$W^+ \twoheadrightarrow \mathbf{1}$. 
We set $W=W^+\oplus W^-$. 
Since $W$ is self-dual by Lemma \ref{tl2} and Lemma \ref{sd}, 
we have an injective homomorphism 
$\mathbf{1} \hookrightarrow W$. 
By taking the $\Sp_{2n}(\bF_q)$-fixed part of this, 
we have $W^{\Sp_{2n}(\bF_q)} \neq 0$, which is contrary 
to Proposition \ref{sol} \ref{en:invY}. 
Hence $W^+$ is irreducible. 

We show the second claim. 
For each $\kappa \in \{\pm\}$, the $\Sp_{2n}(\bF_q)$-representation $W^\kappa$ 
is irreducible by Lemma \ref{irr} and the first claim. 
Since the dimensions of $W^+$ and $W^-$ are different, 
we obtain the claim by the self-duality of 
$W$. 
\end{proof}

\section{Mod $\ell$ Howe correspondence}\label{mod3} 
We formulate a mod $\ell$ Howe correspondence 
for 
$(\Sp_{2n},\oO_2^-)$ 
using mod $\ell$ cohomology of $Y'_{2n}$, 
and show that it is compatible with the ordinary Howe correspondence. 
 
\subsection{Representation of $\oO_2^-(\bF_q)$}\label{mmod}
Let $W=\mathbb{F}_{q^2}$. 
We consider the quadratic form 
$Q \colon W \to \mathbb{F}_q;\ x \mapsto x^{q+1}$. 
Recall that $\oO_2^-$ is 
the orthogonal group over $\bF_q$ defined by $Q$. 
Clearly, we have $Q(\zeta x)=Q(x)$ for any $x \in W$
and $\zeta \in \mu_{q+1}$. 
Hence, we have a natural inclusion 
$\mu_{q+1} \hookrightarrow \oO_2^-(\bF_q)$. 
We regard 
$F_W \colon W \to W;\ x \mapsto x^q$ as an element of $\oO_2^-(\bF_q)$. 
We can easily check that 
$\mu_{q+1} \cap \langle F_W\rangle=\{1\}$. 
This group $\oO_2^-(\bF_q)$ 
is isomorphic to the dihedral group 
of order $2(q+1)$ 
by \cite[Proposition 2.9.1]{KlLiSubcl}. 
We fix the isomorphism 
\[
 \mu_{q+1} \rtimes (\bZ/2\bZ) \xrightarrow{\sim} \oO_2^-(\bF_q);\ 
 (\zeta,i) \mapsto \zeta F_W^i. 
\]
For a pair 
$(\xi,\kappa) \in \Hom (\mu_{q+1},\mu_2(\ol{\bF}_{\ell})) \times \{\pm\}$ such that 
$\xi^2=1$, the map 
\[
(\xi,\kappa) \colon \oO_2^-(\bF_q) \to \mu_2(\ol{\bF}_{\ell});\ 
(x,k) \mapsto \kappa^k \xi (x)
\] 
for $x \in \mu_{q+1}$ 
and $k \in \mathbb{Z}/2\mathbb{Z}$ 
is a character. 
For a character $\xi \in 
\Hom (\mu_{q+1},\ol{\bF}_{\ell}^{\times})$ such that 
$\xi^2 \neq 1$, the $2$-dimensional 
representation 
$\sigma_{\xi}=\Ind_{\mu_{q+1}}^{\oO_2^-(\bF_q)} \xi$ 
is irreducible. 
Note that 
$\sigma_{\xi} \simeq \sigma_{\xi^{-1}}$ as 
$\oO_2^-(\bF_q)$-representations. 
Any irreducible representation of $\oO_2^-(\bF_q)$ 
is isomorphic to one of these representations. 

\subsection{Formulation}\label{sec:Formulation}
Let $\Irr_{\ol{\bF}_{\ell}}(\oO_2^-(\bF_q))$ 
be the set 
of irreducible representations of 
$\oO_2^-(\bF_q)$ over $\ol{\bF}_{\ell}$. 
Let $1 \in \bZ/2\bZ$ act on $\Hom (\mu_{q+1},\ol{\bF}_{\ell}^{\times})$ 
by $\xi \mapsto \xi^{-1}$. 
Then $\Irr_{\ol{\bF}_{\ell}}(\oO_2^-(\bF_q))$ 
is parametrized by 
\[
 \{ \xi \in \Hom (\mu_{q+1},\ol{\bF}_{\ell}^{\times}) \mid 
 \xi^2 \neq 1 \}/(\mathbb{Z}/2\mathbb{Z}) \cup 
 \{ (\xi,\kappa) \mid 
 \xi \in \Hom (\mu_{q+1},\mu_2 (\ol{\bF}_{\ell})), 
 \kappa \in \{ \pm \} \} 
\]
as in \S \ref{mmod}. 

Assume  $n \geq 2$. We define a mod $\ell$ Howe correspondence 
\[
 \Theta_{\ell} \colon \Irr_{\ol{\bF}_{\ell}}(\oO_2^-(\bF_q)) \to 
 \{ \textrm{the representations of 
 $\Sp_{2n}(\bF_q)$ over $\ol{\bF}_{\ell}$} \}
\]
by 
\[
 [\xi] \mapsto 
 H_{\mathrm{c}}^{2n-1}(Y'_{2n,\overline{\mathbb{F}}_q},\mathscr{K}_{\xi}) , \quad 
 (\xi,\kappa) \mapsto 
 H_{\mathrm{c}}^{2n-1}(Y'_{2n,\overline{\mathbb{F}}_q},\mathscr{K}_{\xi})^{\kappa} . 
\]

\begin{thm}\label{thm}
Let $\tau$ be an irreducible representation of 
$\oO_2^-(\bF_q)$ over $\ol{\bF}_{\ell}$. 
Then 
$\Theta_{\ell}(\tau)$ is irreducible 
except the case where $\ell \mid q+1$ and 
$\tau$ corresponds to $(1,+)$, 
in which case $\Theta_{\ell}(\tau)$ is 
a non-trivial extension of 
the trivial representation 
by an irreducible representation. 
Furthermore, 
if $\tau, \tau' \in \Irr_{\ol{\bF}_{\ell}}(\oO_2^-(\bF_q))$ 
are different, 
$\Theta_{\ell}(\tau)$ and $\Theta_{\ell}(\tau')$ 
have no irreducible constituent in common. 
\end{thm}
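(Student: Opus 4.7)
The plan is to deduce the theorem by combining the structural results on $H_{\mathrm{c}}^{2n-1}(Y'_{2n,\ol{\bF}_q},\mathscr{K}_{\xi})$ (and its $\kappa$-eigenparts) proved in Section~\ref{sec:RepSP} with the $\ell$-block machinery recalled in~\S\ref{ssec:Lusbl}. The argument naturally splits into two tasks: (i) establish the prescribed local structure of each $\Theta_\ell(\tau)$, and (ii) show that the sets of irreducible constituents arising from distinct $\tau$ are pairwise disjoint. Throughout I will use that the $U_n$-side results of Section~\ref{mod1} carry over verbatim to $Y'_{2n}$ with its $\mathit{Sp}_{2n}$-action (the proofs of Proposition~\ref{lp} and Corollary~\ref{sde} are purely geometric), so for any characteristic-zero lift $\chi$ of a mod $\ell$ character $\xi$ sharing the same $\ell'$-part, the Brauer character of $W_{n,\psi}[\chi]$ (or of its $\kappa$-eigenpart) agrees with that of $H_{\mathrm{c}}^{2n-1}(Y'_{2n,\ol{\bF}_q},\mathscr{K}_{\xi})$ up to the same trivial-representation correction that appears in Corollary~\ref{sde}.

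For task (i) I run through the irreducible representations of $O_2^-(\bF_q)$ described in~\S\ref{mmod} case by case. For $\tau=[\xi]$ with $\xi^2\neq 1$, I take the Teichm\"uller lift $\chi$ of $\xi$ (so $\chi$ has $\ell'$-order and $\chi_{\ell^a}=1$); the observation above then identifies $\Theta_\ell([\xi])$ with the mod $\ell$ reduction of $W_{n,\psi}[\chi]$, which is irreducible by Proposition~\ref{prop:nc1}\ref{en:Birchi}. For $\tau=(\nu,\kappa)$ (forcing $p\neq 2$), the second isomorphism in~\eqref{irr-2} combined with Proposition~\ref{prop:nc1}(2) yields irreducibility. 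For $\tau=(1,-)$, I invoke Lemma~\ref{irr} directly. For $\tau=(1,+)$, I apply Proposition~\ref{sum} when $p=2$, Proposition~\ref{pl} when $p\neq 2$ and $\ell\mid q+1$, and Proposition~\ref{canon2} when $p\neq 2$ and $\ell\nmid q+1$; the non-split extension appears precisely when $\ell\mid q+1$.

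For task (ii) the main tool is the Lusztig-series assignment~\eqref{eq:lsW}: each $W_{n,\psi}[\chi]$ (or $\kappa$-part if $\chi^2=1$) lies in $\mathcal{E}(\mathit{Sp}_{2n},(s_\chi))$ where $s_\chi\in\mathit{SO}_{2n+1}(\bF_q)$ has spectrum $\{1,\ldots,1,\zeta_\chi,\zeta_\chi^{-1}\}$. Given distinct classes $[\xi]\neq[\xi']$ in the parameter set, I pick $\ell'$-lifts $\chi,\chi'$; the unordered pair $\{\zeta_\chi,\zeta_\chi^{-1}\}$ determines $\chi$ up to the involution $\chi\mapsto\chi^{-1}$, so $s_\chi$ and $s_{\chi'}$ are not geometrically conjugate. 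Since both are $\ell'$-elements, Lemma~\ref{lem:diflb} places $W_{n,\psi}[\chi]$ and $W_{n,\psi}[\chi']$ in disjoint $\ell$-blocks, hence their mod $\ell$ reductions share no irreducible constituent. The trivial constituent that may appear inside $\Theta_\ell((1,+))$ when $\ell\mid q+1$ lies in the principal (unipotent) block, which only overlaps sectors attached to $\chi=1$, so no cross-contamination with $\chi\neq 1$ sectors can occur.

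What remains is to separate $\Theta_\ell((\xi,+))$ from $\Theta_\ell((\xi,-))$ for a single $\xi$ with $\xi^2=1$, where the Lusztig-series argument is not sensitive to the sign. For $\xi=1$, the dimensions of $W_{n,\psi}[1]^\pm$ differ by Lemma~\ref{lem:dim}, and via~\eqref{irr-2} their mod $\ell$ reductions identify (as in the proof of Proposition~\ref{sum}) with the two distinct Weil modules $\alpha_n,\beta_n$ of $\mathit{Sp}_{2n}(\bF_q)$, together with a trivial constituent on the $+$-side exactly when $\ell\mid q+1$; since $\alpha_n\not\simeq\beta_n$ and neither equals the trivial module, the constituent sets are disjoint. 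For $\xi=\nu$ (requiring $p\neq 2$), Proposition~\ref{prop:nc1}(2) gives irreducibility of both $\overline{W_{n,\psi}[\nu]^\pm}$, and Lemma~\ref{lem:dif} shows their Brauer characters differ, so they are non-isomorphic irreducibles. I expect the subtlest point to be confirming, in the exceptional case $(\ell\mid q+1,\,\tau=(1,+))$, that the irreducible sub $\ker\delta^+$ is genuinely different from the constituent of $\Theta_\ell((1,-))$, which once again reduces to $\alpha_n\not\simeq\beta_n$.
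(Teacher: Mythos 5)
Your proposal is correct and follows essentially the same route as the paper: the irreducibility/extension statement comes from Propositions \ref{prop:nc1}, \ref{sum}, \ref{pl}, \ref{canon2} and Lemma \ref{irr}, and disjointness comes from the Lusztig-series separation (Lemma \ref{lem:diflb} and \eqref{eq:lsW} applied to $\ell'$-lifts) combined with Lemma \ref{lem:dim}, \eqref{irr-2}, and Lemma \ref{lem:dif}. One small imprecision in your last paragraph: the separation of $\ker\delta^+$ from $\Theta_\ell((1,-))$ does not literally ``reduce to $\alpha_n\not\simeq\beta_n$'' (an isomorphism statement over $\ol{\bQ}_\ell$ says nothing a priori about shared mod $\ell$ constituents); what actually does the work is the dimension count from Lemma \ref{lem:dim}, namely $\dim\ker\delta^+ = \dim W_{n,\psi}[1]^+ - 1$ still exceeds $\dim W_{n,\psi}[1]^-$ by $q^n-1$, which you do invoke earlier, so the ingredient is in place even if the final phrasing misattributes the conclusion.
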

\begin{proof}
The first claim follows from 
Proposition \ref{prop:nc1}, 
Lemma \ref{irr}, 
Proposition \ref{sum}, 
Proposition \ref{pl} and 
Proposition \ref{canon2}. 

By Lemma \ref{lem:diflb} and 
\eqref{eq:lsW} for $\chi \in \mu_r^{\vee}$, 
the representations 
$H_{\mathrm{c}}^{2n-1}(Y'_{2n,\overline{\mathbb{F}}_q},\mathscr{K}_{\xi})$ 
of $\Sp_{2n}(\bF_q)$ 
for $\xi \in \Hom (\mu_r,\ol{\bF}_{\ell}^{\times})$ 
have no irreducible constituent in common. 
Therefore the second claim follows from 
Lemma \ref{lem:dim}, \eqref{irr-2} and 
Lemma \ref{lem:dif}. 
\end{proof}

We extend $\Theta_{\ell}$ 
to the set of finite-dimensional 
semisimple representations of 
$\oO_2^-(\bF_q)$ over $\ol{\bF}_{\ell}$ 
by additivity. 
Let $\Theta$ be the Howe correspondence for 
$\Sp_{2n}(\bF_q) \times \oO_2^-(\bF_q)$ (\cf \cite[\S 7.2]{ITGeomHW}). 

\begin{prop}\label{lasth}
Let $\pi$ be an irreducible representation of 
$\oO_2^-(\bF_q)$ over $\ol{\bQ}_{\ell}$. 
We have an injection 
\[
 \overline{\Theta (\pi)}^{\mathrm{ss}} 
 \hookrightarrow 
 \Theta_{\ell} (\overline{\pi}^{\mathrm{ss}}) , 
\]
where $\overline{(-)}^{\mathrm{ss}}$ 
denotes the semi-simplification of a mod 
$\ell$ reduction. 
The injection is an isomorphism 
except the cases where 
$\pi$ corresponds to $\chi=\chi_r \chi_{\ell^a}$ 
and we have $\chi_r=1$, $\chi_{\ell^a}\neq 1$. 
\end{prop}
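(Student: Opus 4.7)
The plan is to enumerate the irreducible ordinary representations $\pi$ of $O_2^-(\bF_q)$ and, for each, compare $\overline{\Theta(\pi)}^{\mathrm{ss}}$ with $\Theta_\ell(\overline{\pi}^{\mathrm{ss}})$ by combining the comparison isomorphism \eqref{irr-2} with the short exact sequence of Proposition \ref{lp}\ref{en:KFd} applied to $Y'_{2n,\ol{\bF}_q}$ (with the role of ``$n$'' there played by $2n$, which is even). Either $\pi=(\chi,\kappa)$ with $\chi\in\mu_{q+1}^\vee$ satisfying $\chi^2=1$ and $\kappa\in\{\pm\}$, or $\pi=\sigma_\chi$ with $\chi^2\neq 1$. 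Since $\ell\neq 2$, in the first case $\chi$ has $\ell'$-order, so $\bar\chi=\chi$ and $\bar\pi=(\bar\chi,\kappa)$ is irreducible; in the second, writing $\chi=\chi_r\chi_{\ell^a}$ gives $\bar\chi=\bar\chi_r$, so $\bar\pi=\sigma_{\bar\chi_r}$ is irreducible when $\chi_r^2\neq 1$, while $\bar\pi^{\mathrm{ss}}=(\bar\chi_r,+)+(\bar\chi_r,-)$ when $\chi_r^2=1$ (which forces $\chi_{\ell^a}\neq 1$ since $\chi^2\neq 1$).

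When $\pi=(\chi,\kappa)$, \eqref{irr-2} directly identifies $\overline{W_{n,\psi}[\chi]^\kappa}^{\mathrm{ss}}$ with $H_{\mathrm{c}}^{2n-1}(Y'_{2n,\ol{\bF}_q},\sK_{\bar\chi})^\kappa=\Theta_\ell(\bar\pi)$. When $\pi=\sigma_\chi$, the short exact sequence
\[
 0 \to H_{\mathrm{c}}^{2n-1}(Y'_{2n,\ol{\bF}_q},\sK_\chi)\otimes_\cO\ol{\bF}_\ell \to H_{\mathrm{c}}^{2n-1}(Y'_{2n,\ol{\bF}_q},\sK_{\bar\chi_r}) \to H_{\mathrm{c}}^{2n}(Y'_{2n,\ol{\bF}_q},\sK_\chi)[\fm] \to 0
\]
from Proposition \ref{lp}\ref{en:KFd} embeds $\overline{\Theta(\pi)}^{\mathrm{ss}}=\overline{W_{n,\psi}[\chi]}^{\mathrm{ss}}$ into $H_{\mathrm{c}}^{2n-1}(Y'_{2n,\ol{\bF}_q},\sK_{\bar\chi_r})^{\mathrm{ss}}$. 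This target equals $\Theta_\ell(\sigma_{\bar\chi_r})=\Theta_\ell(\bar\pi)$ when $\chi_r^2\neq 1$ and, using the Frobenius $\pm$-decomposition available because $\bar\chi_r^2=1$, equals $\Theta_\ell((\bar\chi_r,+))\oplus\Theta_\ell((\bar\chi_r,-))=\Theta_\ell(\bar\pi^{\mathrm{ss}})$ when $\chi_r^2=1$.

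To finish, the dimension formula for $H_{\mathrm{c}}^{2n}(Y'_{2n,\ol{\bF}_q},\sK_\chi)[\fm]$ in Proposition \ref{lp}\ref{en:KFd}, with ``$n$'' replaced by the even integer $2n$, shows the cokernel of the injection vanishes precisely when $\chi_r\neq 1$; in the remaining case $\chi_r=1$, $\chi_{\ell^a}\neq 1$, it has dimension one and is a trivial representation by Lemma \ref{ccc0}\ref{en:evtri}. This yields the claimed isomorphism except in that one exceptional case, where a codimension-one proper injection remains. The only delicate point, rather than a genuine obstacle, is matching the Frobenius $\pm$-eigenspaces of $H_{\mathrm{c}}^{2n-1}(Y'_{2n,\ol{\bF}_q},\sK_{\bar\chi_r})$ (when $\bar\chi_r^2=1$) with the $\Theta_\ell$-values of the characters $(\bar\chi_r,\pm)$, which is built into the definition of $\Theta_\ell$.
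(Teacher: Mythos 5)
Your proof is correct and follows the same route as the paper, whose own proof is just the one-line citation ``This follows from \eqref{irr-2}''; you have filled in the details that citation elides, in particular extending the comparison from the $\chi^2=1$ case handled by \eqref{irr-2} to the case $\chi^2\neq 1$ via the short exact sequence of Proposition \ref{lp}\ref{en:KFd}, and explicitly identifying the one-dimensional trivial cokernel (via Lemma \ref{ccc0}\ref{en:evtri}) that produces the exceptional case. This is the same machinery the paper relies on implicitly, so there is no genuine divergence in method.
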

\begin{proof}
This follows from \eqref{irr-2}. 
\end{proof}

\begin{rem}
A mod $\ell$ Howe correspondence is studied in \cite{AubSerHC} in 
a different way and in a general setting under $p \neq 2$ up to semi-simplifications. 
\end{rem}

\noindent
Naoki Imai\\
Graduate School of Mathematical Sciences, The University of Tokyo, 
3-8-1 Komaba, Meguro-ku, Tokyo, 153-8914, Japan \\
naoki@ms.u-tokyo.ac.jp \\[0.5cm]
Takahiro Tsushima\\ 
Keio University School of Medicine, 4-1-1 Hiyoshi, Kohoku-ku, Yokohama, 223-8521, Japan \\
tsushima@keio.jp

\end{document}